\documentclass[10pt]{article} 
\usepackage[preprint]{tmlr}

\usepackage{amsmath,amsfonts,bm}









\def\eqref#1{equation~\ref{#1}}









\def\1{\bm{1}}










\DeclareMathAlphabet{\mathsfit}{\encodingdefault}{\sfdefault}{m}{sl}
\SetMathAlphabet{\mathsfit}{bold}{\encodingdefault}{\sfdefault}{bx}{n}













\usepackage{hyperref}
\usepackage{url}
\usepackage{lscape} 
\usepackage{algorithm}
\usepackage{algorithmic}
\usepackage{hyperref}
\usepackage{microtype}
\usepackage{graphicx}
\usepackage{subfigure}
\usepackage{booktabs}
\usepackage{multicol}

\usepackage{amsmath}
\usepackage{amssymb}
\usepackage{mathtools}
\usepackage{amsthm,stmaryrd}
\usepackage{indentfirst}
\usepackage{changepage}
\usepackage[capitalize,noabbrev]{cleveref}
\usepackage{multirow}
\usepackage{framed}
\usepackage{titlesec}
\usepackage{bm,amsfonts,amsbsy,amstext,amsopn,epstopdf,booktabs,float}
\usepackage[switch]{lineno}
\usepackage[T1]{fontenc}
\usepackage{pifont}
\usepackage{titlesec}
\titleformat*{\section}{\Large\bfseries}
\theoremstyle{plain}
\newtheorem{theorem}{Theorem}[section]

\newtheorem{lemma}[theorem]{Lemma}

\theoremstyle{definition}
\newtheorem{definition}[theorem]{Definition}

\theoremstyle{remark}
\newtheorem{remark}[theorem]{Remark}
\usepackage[textsize=tiny]{todonotes}
\usepackage{caption} 
\usepackage{adjustbox}
\usepackage{siunitx}
\usepackage[utf8]{inputenc} 
\usepackage{nicefrac}       
\usepackage{microtype}      
\usepackage{wrapfig}
\usepackage{float}
\usepackage[most]{tcolorbox}
\definecolor{darkgrey}{rgb}{0.53,0.53,0.53}
\definecolor{mygrey}{rgb}{0.9,0.9,0.9}
\usepackage{mathrsfs}
\usepackage{marvosym}
\title{Lie Symmetry Net: Preserving Conservation Laws in Modelling Financial Market Dynamics via  Differential Equations}


 \author{\name Xuelian Jiang \email  jiangxl133@nenu.edu.cn\\
       \addr School of  Mathematics and Statistics\\
       Northeast Normal University
       \AND
       \name Tongtian Zhu \email raiden@zju.edu.cn\\
       \addr College of Computer Science and Technology\\
       Zhejiang University
       \AND   
       \name Yingxiang Xu\textsuperscript{\Letter} \email yxxu@nenu.edu.cn \\
       \addr School of  Mathematics and Statistics\\
       Northeast Normal University
       \AND
      \name Can Wang \email wcan@zju.edu.cn\\
       \addr College of Computer Science and Technology\\
       Zhejiang University
       \AND
       Yeyu Zhang \email zhangyeyu@mail.shufe.edu.cn\\
       \addr School of Mathematics\\
       Shanghai University of Finance and Economics
       \AND
       Fengxiang He \email F.He@ed.ac.uk\\
       \addr School of Informatics\\
       University of Edinburgh
       }




\begin{document}

\maketitle

\begin{abstract}
This paper employs a novel Lie symmetries-based framework to model the intrinsic symmetries within financial market. Specifically, we introduce {\it Lie symmetry net} (LSN), which characterises the Lie symmetries of the  differential equations (DE) estimating financial market dynamics, such as the Black-Scholes equation.
To simulate these differential equations in a symmetry-aware manner, LSN incorporates a Lie symmetry risk derived from the conservation laws associated with the Lie symmetry operators of the target differential equations. This risk measures how well the Lie symmetries are realised and guides the training of LSN under the structural risk minimisation framework. Extensive numerical experiments demonstrate that LSN effectively realises the Lie symmetries and achieves an error reduction of more than {\it one order of magnitude} compared to state-of-the-art methods. The code is available at \href{https://github.com/Jxl163/LSN_code}{https://github.com/Jxl163/LSN$\_$code}.
\end{abstract}

\section{Introduction}
A classic approach for modeling financial market dynamics is via stochastic differential equations (SDEs).
Through the application of the Feynman-Kac formula \citep{del2004feynman}, these SDEs can be transformed into corresponding Partial Differential Equations (PDEs), such as the Black-Scholes (BS) equation \citep{ merton1973theory, Black_Scholes_1973, goodman2001mathematics,Rodrigo_Mamon_2006}.
Traditionally, numerical methods such as Finite Volume Method (FVM) \citep{Valkov_2014} and B-spline collocation  methods \citep{Kadalbajoo_Tripathi_Kumar_2012, Huang_Cen_2014} are used to simulate these equations. In recent years, AI - driven methodologies, exemplified by Physics - Informed Neural Networks (PINNs) \citep{raissi2019physics}, have emerged as a powerful solution for solving differential equations by utilizing collocation data, a set of scattered points within the solution domain, to fit their dynamics.


A defining characteristic of SDEs is their ``symmetry''.  
A major family of mathematical tools to characterise the symmetry are Lie symmetry groups \citep{edelstein2009conservation,paliathanasis2016lie,gazizov1998lie}.
Lie symmetries preserve the structural properties of solutions, simplifying the process of finding solutions and improving accuracy \citep{misawa2001lie,kozlov2010symmetries,marjanovic2015simple,gaeta2017symmetry,marjanovic2018numerical}. This principle is also applicable to the Black-Scholes (BS) equation in finance \citep{rao2016high}. 
Our vision is that the Lie symmetries can represent some intrinsic symmetry in financial markets, though in an abstract manner \citep{kozlov2010symmetries}. 


However, Lie symmetries remain largely unexplored in current AI-driven DE solvers. Neglecting this symmetry may lead AI-driven approaches to learn solutions performing well on training collocation data, yet fail to satisfy inherent structural constraints.  This is caused by the imbalance or other limitations,  such as the low quality of  collocation data, which may compromise the generalizability of the learned solver. As a result, the performance of the learned solver on unseen data remains uncertain, as widely reported in the literature \citep{cohen2021equivariant,yang2023neural,li2024symmetry}.


This paper endeavours to answer the following fundamental question:
\begin{tcolorbox}[notitle, rounded corners, colframe=darkgrey, colback=white, boxrule=2pt, boxsep=0pt, left=0.15cm, right=0.17cm, enhanced, shadow={2.5pt}{-2.5pt}{0pt}{opacity=5,mygrey},toprule=2pt, before skip=0.65em, after skip=0.75em]
\emph{
    {\centering {Could Lie symmetries facilitate AI-driven DE solvers in simulating financial market dynamics, and how?}\\}
}
\end{tcolorbox}
Motivated by this question, we design {Lie symmetry net} (LSN), which enables the simulation of financial market dynamics while preserving Lie symmetries.

Similar to many symmetries in physics, the Lie symmetries can be transformed into conservation laws \citep{kara2002basis,edelstein2009conservation,khalique2018lie,ozkan2020third}. Specifically, for the Black-Scholes,
the conservation laws derived from Lie symmetries are
    \begin{equation*}
        D_tT^t + D_xT^x = 0,
    \end{equation*} 
where $D_{t}$, $D_x$ represents the partial derivative with respect to time $t$ or asset price $x$, and $(T^t, T^x)$ represents the conservation vector subject to the symmetry condition ({\it i.e.}, Lie point symmetry operator) $G$, such that the action of $G$ on the conservation vector satisfies $G(T^t, T^x) = 0$ \citep{Kara_Mahomed_2000,edelstein2009conservation}. 

In our LSN, we design a novel {\it Lie conservation residual} to quantify how well the Lie symmetries are realised on one specific point in the collocation data space that comprises asset price and time. This Lie conservation residual then induces a {\it Lie symmetry risk} that aggregates the residual over the collocation  data space, and thus characterises how Lie symmetries are realised from a global view. It is worth noting that this Lie symmetry risk depends on the specific conservation law, and thus the specific Lie symmetry operator. This Lie symmetry risk is then integrated with risk functions measuring how well the LSN fits the collocation  data \citep{raissi2019physics,xie2023automatic}, and formulates the structural risk of LSN.
We can optimise the LSN under the structural risk minimisation (SRM) framework \citep{shawe1998structural} to learn an differential equations solver while preserving the Lie symmetries.


Extensive numerical experiments are conducted to verify the superiority of LSN. 
We compare LSN with state-of-the-art methods including IPINNs \citep{bai2022application}, sfPINNs \citep{wong2022learning}, ffPINNs \citep{wong2022learning} and LPS \citep{akhound2024lie}.
The results demonstrate that LSN consistently outperforms these methods, achieving error reductions of more than an order of magnitude. Specifically, the error magnitude with single operator reaches $10^{-3}$, while with combined operators (refer to \cref{ex3}), it further decreases to $10^{-4}$. 

The paper is structured as follows.  \cref{rela} provides an overview of related work.  \cref{sec3} discusses the background of PINNs and SDEs.  \cref{sec4} introduces the methodology of LSN.  \cref{sec5} presents numerical experiments to validate the effectiveness of LSN. Finally,  \cref{sec6} draws conclusions and outlines directions for future research.  \cref{AA} provides additional background, models and the theoretical analyses of LSN.
\section{Related Works}\label{rela}
\textbf{Numerical Equation Solvers}. 
Numerical methods have long been essential for solving partial differential equations in various domains, including financial market modeling. Significant progress has been made in this area with models such as the Black-Scholes equation.
Traditional approaches, including finite volume methods (FVM) \citep{Valkov_2014} and  B-spline collocation methods \citep{Kadalbajoo_Tripathi_Kumar_2012, Huang_Cen_2014}, have been widely applied to solve these equations \citep{koc2003numerical,rao2016high}. These grid-based techniques rely on discretizing the spatial and temporal domains, transforming the continuous equations into discrete problems suitable for simulation. However, these methods often come with high computational complexity, which may limit their applicability.

\textbf{Neural Equation Solvers}. In recent years, there has been a gradual increase in applying neural networks to solve differential equations. 
Two main approaches have emerged in this area. The first one, neural operator methods \citep{li2020fourier,lu2021learning,kovachki2023neural,hao2023gnot}, focuses on learning the mapping between the input and output functions of the target equations. In contrast, the second approach, Physics-Informed Neural Networks (PINNs) \citep{raissi2019physics}, directly approximates the solution of the equations, rather than relying exclusively on  the values of the function derived from collocation data.
PINNs and their variants, such as sfPINNs \citep{wong2022learning} and ffPINNs \citep{wong2022learning}, have gained popularity for  utilizing physical laws into the training process.  
Recent studies have successfully applied PINNs to solve financial equations, introducing efficient methods like IPINNs \citep{bai2022application}, which incorporate regularization terms for slope recovery.

\textbf{Input Space Symmetries}.
Despite significant advancements in machine learning, the inherent symmetries within input data remain underutilized. Equivariant Neural Networks (ENNs) represent an innovative architectural approach explicitly designed to encode and utilize these symmetries, resulting in the property of equivariance \citep{ celledoni2021equivariant,satorras2021n,gerken2023geometric}. This property enables ENNs to efficiently model data with structured invariances.
In the domain of computer vision, ENNs have shown remarkable efficacy over traditional neural networks. For example, in image classification tasks, they achieve higher accuracy in recognizing objects subjected to transformations such as rotation or translation \citep{rojas2024making}. By preserving the symmetries of input data, ENNs retain critical geometric information during feature extraction \citep{he2021efficient}. Moreover, ENNs also exhibit exceptional performance in physical simulations by effectively modeling symmetrical transformations of physical systems \citep{bogatskiy2024explainable}, thereby improving both the accuracy and computational efficiency of simulations.

\textbf{Parameter Space Symmetries}. 
Over recent decades, a range of studies have analyzed symmetries in neural network parameter spaces—transformations of network parameters that leave the underlying network function unchanged \citep{HECHTNIELSEN1990129, SUSSMANN1992589}. Notable examples of such symmetries include invertible linear transformations in linear networks and rescaling transformations in homogeneous networks \citep{badrinarayanan2015symmetry, NEURIPS2018_fe131d7f}. Recent studies further provide deeper insights and expand the scope of parameter space symmetries. \citet{zhao2023symmetries} investigate diverse parameter space symmetries and derive novel, nonlinear, data-dependent symmetries. \citet{pmlr-v235-ziyin24a} establish a systematic framework for analyzing symmetries, showing that rescaling symmetry induces sparsity, rotation symmetry leads to low-rank structures, and permutation symmetry facilitates homogeneous ensembling. Further,
\citet{ziyin2024parameter} examines how exponential symmetries, a broad subclass of continuous symmetries, interplay with stochastic gradient descent.


\textbf{Lie symmetries}. 
While input space and parameter space symmetries have played crucial roles in neural network design, they have rarely been applied to characterize  the intrinsic symmetries of differential equations.
Lie symmetry analysis, in contrast, provides a systematic framework for analyzing these symmetries.
It represents a significant type of symmetry  with widespread applications in mathematics \citep{olver1993applications,ibragimov1995crc}. As a powerful tool for solving partial differential equations (PDEs), Lie symmetry method leverages the symmetry group of an equation to reduce its order or transform it into a simpler, more tractable form, thereby simplifying the solution process \citep{gazizov1998lie,liu2009lie,oliveri2010lie}. Despite its well-established theoretical foundation in traditional mathematical fields, its integration into neural DE solvers remains largely unexplored.

A more recent work by \citet{akhound2024lie} proposes to incorporate Lie symmetries into PINNs by minimizing the residual of the determining equations of Lie symmetries. 
While this approach offers an interesting direction, our LSN follows a different methodology.
Specifically, their Lie point symmetry (LPS) method focuses on minimizing these symmetry residuals, whereas our LSN realises Lie symmetries by preserving the conservation laws derived from the Lie symmetry operators. These conservation laws are fundamental principles inherent to the system described by the differential equations. Additionally, LPS has been validated only on the Poisson and Burgers equations in their original paper and its effectiveness in leveraging inherent symmetries in financial markets remains unclear. In contrast, our comprehensive comparative experiments in financial domain, specifically on the BS equation across various parameters, clearly demonstrate the superiority of LSN over LPS by reducing testing error by an order of magnitude.

\section{Preliminaries}\label{sec3}
This section provides the essential background knowledge. We begin with an introduction to Physics-Informed Neural Networks. We then cover stochastic differential equations and explain how the Feynman-Kac formula allows for the transformation of a SDE into a corresponding PDE. To concretize this theoretical framework, we provide illustrative examples including the Black-Scholes equation.
For further terminology and models related to finance and Lie symmetries, such as the Va\v{s}i\v{c}ek equation, please refer to \cref{AA}.


\subsection{Physics-Informed Neural Networks (PINNs)}
PINNs integrate physical information from the equations to approximate numerical solutions of PDEs,
rather than relying exclusively on collocation  data. 
The use of PINNs to solve differential equations typically begins with generating a collocation  dataset $\mathcal{S} = \left\{\left\{(x_i^n,t_i^n)\right\}_n^{N_{i}}, \left\{(x_s^n,t_s^n)\right\}_n^{N_s}, 
 \left\{x_t^n\right\}_n^{N_t}\right\}$ by randomly sampling points within the solution domain. To understand how PINNs operate, consider the following evolution partial differential  equation,
     \begin{equation}\label{3}
        \left\{
        \begin{array}{ll}
            \frac{\partial u(x, t)}{\partial t} =\mathbf{L}[u] & \forall  (x, t) \in \Omega \times[0, T], \\
            u(x,0)=\varphi(x) & \forall   x \in \Omega, \\
            u(y, t)=\psi(y, t) & \forall  (y, t) \in \partial \Omega \times[0, T],
        \end{array}
        \right.
    \end{equation}
    where  $\mathbf{L}[u]$ represents a differential operator, $\Omega$ denotes a bounded domain, $\varphi(x)$ and $\psi(y,t)$ correspond to the initial and boundary conditions, respectively, $T$ refers to the terminal time, and $u(x,t)$ is the function to be determined. To solve \cref{3}, PINNs approximate the exact solution by modeling $u$ as a neural network $\hat{u}$ and minimizing an empirical loss function.
    \begin{equation*}
        \hat{\mathcal{L}}_{PINNs} = \frac{1}{N_i}\sum_{n=1}^{N_i}\left|\frac{\partial \hat{u}_\theta(x_i^n,t_i^n)}{\partial t} - \mathbf{L}[\hat{u}](x_i^n,t_i^n)\right|^2+\frac{1}{N_b}\sum_{n=1}^{N_b}\left|\hat{u}_\theta(x_b^n, t_b^n)-\psi(x_b^n, t_b^n)\right|^2 + \frac{1}{N_t}\sum_{n=1}^{N_t}\left|\hat{u}_\theta(0, x_t^n)-\varphi(x_t^n)\right|^2,
    \end{equation*}
    where $\theta$ denotes the network parameters. The first term evaluates the residual of the PDE, while the subsequent terms quantify the errors associated with the boundary and initial conditions, respectively. 
\subsection{Stochastic Differential Equation (SDE)}
SDE \citep{oksendal2003stochastic} provide a mathematical framework for modeling systems influenced by random disturbances. To understand the dynamics of a stochastic process $X_t$, we consider the SDE  of the following general form
\begin{equation}\label{sde1}
dX_t = \mu(X_t, t) dt + \sigma(X_t, t) dW_t,
\end{equation}
where $X_t$ represents the stochastic variable of interest and $W_t$ is a standard Wiener process (as defined in  \cref{Wiener process}). The functions $\mu(X_t, t)$ and $\sigma(X_t, t)$, known as the drift and diffusion coefficients, respectively, are functions that characterise the deterministic and stochastic components of the dynamics.

The Feynman-Kac formula  provides a critical theoretical framework to establish a connection between certain types of PDEs and SDEs (see \cref{fkformula}). We illustrate this with a representative example from finance.


\textbf{Example } (Black-Scholes equation \citep{janson2006feynman}).
Considering a frictionless and arbitrage-free financial market comprising a risk-free asset and a unit risky asset, the dynamics of the market  can be modeled by the following SDE:
    \begin{equation}\label{eq1}
        dx_t = r x_t dt +\sigma x_t dW_t,
    \end{equation}
where $x$ denotes the price of a unit risky asset, $t$ represent time, $\sigma$ is the volatility, $r$ is the risk-free interest rate and $W_t$ is a standard Wiener process (refer to   \cref{Wiener process}). 
Applying the Feynman-Kac formula, the Black-Scholes equation for evaluating the price $u(x,t)$ of a European call option (refer to  \cref{eco}) is derived as follows:
    \begin{equation}\label{BSMs}
        \left\{
        \begin{array}{ll}
            \frac{\partial u}{\partial t} + \frac{1}{2}\sigma^2x^2\frac{\partial^2u}{\partial x^2}+rx\frac{\partial u}{\partial x} - ru = 0 & \Omega \times[0, T], \\
            u(x,T) = \max (x-K, 0) &  \Omega\times T , \\
            u(0,t) = 0 &  \partial\Omega  \times[0, T] ,
        \end{array}
            \right.
    \end{equation}
where  $K$ is the  strike price, $T$ is the expiry time of the contract and $\Omega$ is a bounded domain. The solution to this equation  can be written as follows \citep{bai2022application}:
\begin{equation*}
    \begin{aligned}
        u(x,t) & = x\mathcal{N}(d_1) - K \exp^{-r(T-t)}\mathcal{N}(d_2),\,
        d_1  = \frac{ln(x/K)+(r+0.5\sigma^2)(T-t)}{\sigma T-t},\,
        d_2 = d_1 - \sigma \sqrt{T-t},
    \end{aligned}
\end{equation*}
where $\mathcal{N}$ denotes the standard normal distribution.

\subsection{Lie Group Analysis}
Groups, which mathematically characterize symmetries, describe transformations that preserve certain invariances.
Formally, a group $(G, \cdot)$ is defined as a set $G$ equipped with a binary operation $\cdot$ that satisfies the properties of associativity, contains an identity element $e \in G$, and ensures the existence of an inverse element $g^{-1}$ for each $g \in G$ \citep{rotman2012introduction}. When groups are equipped with the structure of differentiable manifolds, they are referred to as Lie groups and play a fundamental role in the analysis of continuous symmetries.\citep{knapp1996lie}. 
Lie group analysis provides a powerful tool for studying symmetry, conservation laws, and dynamic systems of equations \citep{gazizov1998lie,paliathanasis2016lie}. The goal of Lie group analysis is to identify the symmetries of an equation, especially those transformations under Lie group actions that leave the equation invariant. These symmetries facilitates conservation law derivation \citep{edelstein2009conservation}, simplifies solutions, and lowers computational complexity \citep{rao2016high,khalique2018lie}.

\section{Lie Symmetry Net}\label{sec4}
In this section, we introduce Lie symmetry net (LSN). In  \cref{4.1}, we briefly derive the corresponding conservation law from the Lie symmetry operators of the target equations, which in turn lead to the Lie symmetry risk of LSN. In  \cref{4.2}, we discuss the structure risk minimization of Lie Symmetry Net based on the Lie symmetry risk.

\subsection{Lie Symmetries in Equations}\label{4.1}
This subsection presents the Lie symmetry operators  for the Black-Scholes equation, 
and derive the corresponding conservation laws, which allows us to define the associated Lie symmetry risk.

\textbf{Lie Symmetry Operator. }
Lie symmetry operator is a major mathematical tool for characterizing the symmetry in PDEs (see \cref{lso_defini}) \citep{paliathanasis2016lie}.
The Lie symmetry operators \citep{gazizov1998lie,edelstein2009conservation} of BS  \cref{BSMs} are given by the vector field 
\begin{equation}\label{bs_con}
\begin{aligned}
            G_\phi=&\,\phi(t, x) \frac{\partial}{\partial u},\,G_1  =\frac{\partial}{\partial t},\,
            G_2  = x\frac{\partial}{\partial x},
            G_3  =\, u\frac{\partial}{\partial u},\\
            G_4=&\,2 t \frac{\partial}{\partial t}+(\ln x+Z t) x \frac{\partial}{\partial x}+2 r t u \frac{\partial}{\partial u},
             G_5=\,\sigma^2 t x \frac{\partial}{\partial x}+(\ln x-Z t) u \frac{\partial}{\partial u}, \\ 
            G_6=&\,2 \sigma^2 t^2 \frac{\partial}{\partial t}+2 \sigma^2 t x \ln x \frac{\partial}{\partial x}+\left((\ln x-Z t)^2\right.\left.+2 \sigma^2 r t^2-\sigma^2 t\right) u \frac{\partial}{\partial u},
        \end{aligned}
    \end{equation}
where $Z = r-\sigma^2/2$, and  $\phi(t,x)$ is an arbitrary solution to   \cref{BSMs} without any boundary condition or initial condition. The first symmetry $G_\phi$ is an infinite-dimensional symmetry, arising as a consequence of linearity. These Lie symmetry operators form an infinite Lie group vector space \citep{edelstein2009conservation}. 
The derivation of Lie symmetry operators for general differential equations is included in Appendix \ref{lso_defini}.



These Lie symmetry operators not only provide a deeper insight into the structure of the PDEs but also form the foundation for deriving conservation laws associated with these equations.

\textbf{Conservation Law. }
Similar to many symmetries in physics, the Lie symmetries can be transformed to conservation laws  \citep{kara2002basis,edelstein2009conservation,khalique2018lie,ozkan2020third}. In this paper, we interpret the Lie symmetry point operators  as the following
conservation laws: regardless of how the space $x$, time $t$, and exact solution $u$ vary, the conservation vector $(T^t, T^x)$ corresponding to the Lie point symmetry remains zero, {\it i.e.},
\begin{equation}\label{clom}
    D_tT^t(u,x,t) + D_xT^x(u,x,t) = 0,
\end{equation} 
where the $D_{t}$, $D_{x}$  represents the partial derivative with respect to time $t$ or space $x$, and $(T^t, T^x)$ represents the conservation vector subject to the symmetry condition ({\it i.e.}, Lie point symmetry operator) $G$, such that the action of $G$ on the conservation vector satisfies $G(T^t, T^x) = 0$ \citep{edelstein2009conservation}.
It is worth noting that this Lie symmetry risk depends on the specific conservation law.  
 The derivation of the conservation law for general differential equations is included in Appendix \ref{conlaw}.


   \begin{figure*}[t!]
    \centering
    \subfigure
    {\includegraphics[width=0.85\textwidth]{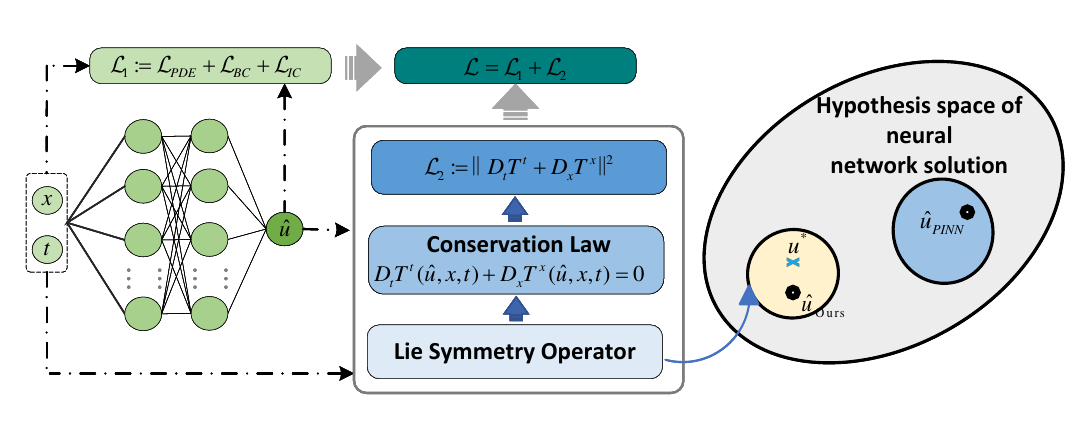}}
    \caption{Schematic Diagram of Lie Symmetry Network Architecture. 
    \textbf{Left: }The incorporation of a Lie symmetries blocks into a PINN architecture. \textbf{Right:} The hypothesis space of PINN, denoted as $\hat{u}_{PINN}$ (blue circle), is a subset of the broader neural network solution space ( gray circle). $u^*$ represents the exact solution. The yellow circle signifies the hypothesis space that satisfies symmetry conditions. The incorporation of Lie symmetries blocks produces our solution $\hat{u}_{ours}$. In simpler terms, being closer to the space where the exact solution exists leads to fewer errors. The non-intersection between ``$\hat{u}_{PINN}$" and ``$\hat{u}_{Ours}$" in the figure is a specific case provided to enhance clarity in our exposition.}
    \label{fig:0434ss0}
    \end{figure*}
We can derive the conservation law 
 of the  operator $G_2$ (see  \cref{bs_con}) of BS equation as follows \citep{edelstein2009conservation}:
    \begin{equation}\label{ccl2}
    \left\{
        \begin{aligned}
            T^t_2(u,x,t)=&\,-\frac{\partial u}{\partial x} l(t)+\frac{\mathscr{A}}{x}+\frac{2 \mathscr{B} u}{\sigma^2 x} \mathrm{e}^{-r t}, \\
            T^x_2(u,x,t)=&\,\frac{\partial u}{\partial t} l(t)+u \frac{\partial l(t)}{\partial t}+g(t)-\mathscr{B} u \mathrm{e}^{-r t}+\mathscr{B}\left(\frac{\partial u}{\partial x} +\frac{2 r u}{\sigma^2 x}\right) x \mathrm{e}^{-r t},
        \end{aligned}
    \right.
    \end{equation}
where $\mathscr{A}$ and $\mathscr{B}$ are arbitrary constants, and $l(t)$ and $g(t)$ are arbitrary functions with respect to $t$. Unless stated otherwise, consider $\mathscr{A}=\mathscr{B}=1$, $l(t)=t$, and $ g(t)=t^2$. The selection of $G_2$ is primarily motivated by its simplicity, as it effectively illustrates the fundamental concepts of the proposed method. Additional experiments in Section \ref{ex3} demonstrate the effectiveness of the proposed methods when applied with various operators and their combinations on the Va\v{s}i\v{c}ek equation.

We can then define the Lie conservation residual $\mathcal{R}_{Lie}$ according to  \cref{clom} to evaluate the extent to which the Lie symmetries are realised at a specific point in the collocation data space.
\begin{definition}[Lie conservation residual]
   Combining the Lie symmetry operator and the conservation law, we define the Lie conservation residual of $\hat{u}$ as follows:
   \begin{equation}\label{c0l11}
\mathcal{R}_{Lie}[\hat{u}]  = D_tT^t(\hat u) + D_xT^x(\hat u),
\end{equation}
where the $D_{t}$, $D_{x}$   represents the partial derivative with respect to  $t$ or $x$, and $(T^t, T^x)$ represents the conservation vector subject to the symmetry condition $G$.
\end{definition}

We can aggregate the Lie symmetries residuals over the entire collocation data space to obtain the Lie symmetry risk, which characterises the degree to which the Lie symmetries are realised from a global perspective.
\begin{definition}[Lie symmetry risk]\label{lsr} 
 According to the expression in  \cref{c0l11}, the definition of Lie symmetry risk is provided as follows:
\begin{equation}\nonumber
\mathcal{L}_{Lie}[\hat{u}_\theta]\left(x,t\right)  =\int_{\Omega\times[0,T]}\left|\mathcal{R}_{Lie}[\hat{u}_\theta]\left(x,t\right)\right|^2dxdt,
\end{equation}
where  $\hat u_\theta$ denotes the network output,  and $\theta$ denotes the network parameters.

\end{definition}
\begin{remark} The Lie symmetry risk 
    $\mathcal{L}_{Lie}$ focuses solely on learning the symmetry of the problem without taking into account the underlying physical laws of the problem.
\end{remark}
This Lie symmetry risk is defined over the collocation data distribution, which is unknown in practice. 
We thus define
the {\it{Empirical Lie symmetry risk}}  $\hat{\mathcal{L}}_{Lie}$ 
as an approximation of the Lie symmetry risk $\mathcal{L}_{Lie}$,
\begin{definition}[Empirical Lie symmetry risk]
Summing up the Lie symmetry operators at $N_i$ discrete points provides an approximation to the Lie symmetry risk.
\begin{equation}\label{nohat}
\hat{\mathcal{L}}_{Lie}(\theta,\mathcal{S}):=\frac{1}{N_i} \sum_{n=1}^{N_i}\left|\mathcal{R}_{Lie}[\hat{u}_\theta](x_i^n,t_i^n)\right|^2,
\end{equation}
where $\mathcal{S}=\left\{(x_i^n,t_i^n)\right\}_{n=1}^{N_i}$ represents the set of these  $N_i$ discrete points.
\end{definition}

\subsection{Structure Risk Minimisation}\label{4.2}
In this subsection, we present the structure risk minimization of LSN based on the Lie symmetry risk.
For clarity, we reformulate the Black-Scholes  \cref{BSMs} to match the format of  \cref{3}  (see   \cref{A2}), aligning it with the Va\v{s}i\v{c}ek \cref{aopopp} in the \cref{A22} for a coherent presentation, where

    \begin{equation}\label{bw}
      \mathbf{L}[u] := \frac{1}{2}
        \sigma(x)^2\frac{\partial^2u(x, t)}{\partial x^2}+\mu(x)\frac{\partial u(x, t)}{\partial x} +\upsilon(x)u(x, t),
    \end{equation}
is a differential operator with respect to three bounded affine functions $\sigma(x)$, $\mu(x)$ and $\upsilon(x)$ (for BS equation: $\sigma(x) = \sigma x$, $\mu(x) = rx$, $\upsilon(x) = -r$ and $\varphi(x) = max(x-K,0)$ (for Va\v{s}i\v{c}ek  \cref{aopopp}  $\sigma(x) = \sqrt{2\alpha}=\sigma$, $\mu(x) = \lambda(\beta-x)=-x$, $
\upsilon(x) = \gamma x$ and $\varphi(x) = 1$).

\textbf{Data Fitting Residuals.}
The following functions $\mathcal{R}_j$ ($j = \{i,s,t\}$) characterise how well the LSN is fitting the collocation data according to  \cref{3}, 
 for $\forall  \hat{u} \in C^2(\mathbb{R}^d)$
\begin{equation}\label{eqR}
    \begin{array}{ll}
        \mathcal{R}_i[\hat{u}](x,t) = \frac{\partial \hat{u}(x, t)}{\partial t} - \mathbf{L}[\hat{u}](x,t) & (x, t) \in \Omega \times[0, T], \\
        \mathcal{R}_s[\hat{u}](y,t) = \hat{u}(y, t)-\psi(y, t) & (y, t) \in \partial \Omega \times[0, T] ,\\
        \mathcal{R}_t[\hat{u}](x) = \hat{u}(0, x)-\varphi(x) & x \in \Omega.
    \end{array}
\end{equation}
We define the empirical loss function $\hat{\mathcal{L}}{1}$ to approximate the population risk $\mathcal{L}{1}$ in fitting the collocation data, by utilizing the aforementioned residuals, as follows:
\begin{equation}\nonumber
    \begin{aligned}
        \hat{\mathcal{L}}_1[\hat{u}_\theta]\left(x,t\right)  &= \hat{\mathcal{L}}_{PDE}[\hat{u}_\theta]\left(x,t\right)  + \hat{ \mathcal{L}}_{BC}[\hat{u}_\theta]\left(x,t\right) + \hat{\mathcal{L}}_{IC}[\hat{u}_\theta]\left(x,t\right)\\
        &:=\frac{1}{N_i} \sum_{n=1}^{N_i}\left|\mathcal{R}_i[\hat{u}_\theta](x_i^n,t_i^n)\right|^2+\frac{1}{N_s} \sum_{n=1}^{N_s}\left|\mathcal{R}_s[\hat{u}_\theta]\left(x_s^n, t_s^n\right)\right|^2+\frac{1}{N_t} \sum_{n=1}^{N_t}\left|\mathcal{R}_t[\hat{u}_\theta]\left(x_t^n\right)\right|^2.
    \end{aligned}
\end{equation}
Here, $\mathcal{S} = \left\{\left\{(x_i^n,t_i^n)\right\}_n^{N_{i}}, \left\{(x_s^n,t_s^n)\right\}_n^{N_s}, 
 \left\{x_t^n\right\}_n^{N_t}\right\}$  denotes the collocation data set, which is generated by sampling points within the domain $\Omega\times[0,T]$ according to a Gaussian distribution.
\begin{remark}
    $\hat{\mathcal{L}}_{1}$ essentially corresponds to the Physics-Informed Neural Networks (PINNs) (please refer to  \cref{sec3}), which accurately estimates the majority of the training collocation data based on the inherent physical laws of the problem. However, it does not explicitly consider the symmetry within.
\end{remark}

\textbf{Structural Risk of LSN.}  We next present an empirical approximation of the structural risk $\mathcal{E}(\theta)$ of LSN.
\begin{definition}[Empirical structural
risk of LSN]
The empirical loss  of LSN is defined  as follows
\begin{equation}\label{jT}
    \begin{aligned}
        \hat{\mathcal{E}}(\theta,\mathcal{S}) :=&\lambda_1\hat{\mathcal{L}}_{PDE}(\theta,\mathcal{S}_i)+\lambda_2\hat{\mathcal{L}}_{BC}(\theta,\mathcal{S}_s)+\lambda_3\hat{\mathcal{L}}_{IC}(\theta,\mathcal{S}_t)+\lambda_4\hat{\mathcal{L}}_{Lie}(\theta,\mathcal{S}_i),
    \end{aligned}
\end{equation}
where $\mathcal{S}= \left\{\mathcal{S}_{i}\left\{(x_i^n,t_i^n)\right\}_n^{N_{i}},  \mathcal{S}_{s}\left\{(x_s^n,t_s^n)\right\}_n^{N_s}, \right.
\left.\mathcal{S}_{t}\left\{x_t^n\right\}_n^{N_t}\right\}$ are the training collocation data sets, and $\lambda_i$ $(i = 1, \ldots, 4)$ denote the hyperparameters.
\end{definition}

We train LSN by solving the following minimisation problem,
\begin{equation}\nonumber
    \begin{aligned}
       \theta^* = &\arg\min\limits_\theta \lambda_1\hat{\mathcal{L}}_{PDE}(\theta,\mathcal{S}_i)+\lambda_2\hat{\mathcal{L}}_{BC}(\theta,\mathcal{S}_s)+\lambda_3\hat{\mathcal{L}}_{IC}(\theta,\mathcal{S}_t)+\lambda_4\hat{\mathcal{L}}_{Lie}(\theta,\mathcal{S}_i),
    \end{aligned}
\end{equation}
and the minimum $\hat{u}_{\theta^*}$ corresponds to the well-trained LSN.

\section{Experiments}\label{sec5}
In this section, we conduct three main experiments. In \cref{ex1}, we perform an ablation study on LSN, comparing it to PINNs \citep{raissi2019physics} under varying equation parameters. Next, we compare LSN with state-of-the-art baselines, including IPINNs \citep{bai2022application}, sfPINNs \citep{wong2022learning}, ffPINNs \citep{wong2022learning}, and LPS \citep{akhound2024lie}, thereby further validating its superiority. In \cref{ex2}, we extend LSN to multiple models, providing experimental comparisons under both single-operator and combined-operator settings to verify its effectiveness. In \cref{ex3}, we test LSN on real financial data from Yahoo Finance to demonstrate its practical applicability.

We start by providing a brief introduction to the parameter settings for all experiments, with specific parameters for each experiment detailed in their respective sections.

\textbf{Data.}  The small-scale experiments employ a training set 50 internally scattered points and 2000 points randomly placed at the boundaries, while the test set consists of 2,500 (or 200) uniformly sampled points.
 The large-scale experiments employ a training set of 2000 internally scattered points and 8000 points randomly placed at the boundaries, while the test set consists of 2,500 (or 200) uniformly sampled points.  

\textbf{Neural Architecture and Optimiser.}
The LSN network employs a fully connected architecture, consisting of 9 layers with each layer having a width of 50 neurons. The tanh function is used as the activation function. For optimization, we choose Adam with an initial learning rate of $0.001$ and a learning rate decay factor $\Gamma$.

\textbf{Equation Parameters}.
For the parameters of the Black-Scholes equation, we set them to $K = 10$, $x\in [0,20]$, and $t\in [0,1]$, following the conventions established in the existing literature \citep{bai2022application}.

\textbf{Evaluation. }
The evaluation metrics include  relative test error (refer to the definition in Section \cref{relativeerror}) and  conservation error  $\hat{\mathcal{L}}_{Lie}$.


\subsection{Experiments on Black-Scholes Equation}\label{ex1}

In this section, we perform experiments focused on the Black-Scholes (BS) equation to conduct ablation studies and comparative analyses on LSN. In \cref{ex11}, we perform ablation studies on LSN with both small- and large-scale datasets, comparing it to the baseline PINN method, to demonstrate its performance improvements across varying data scales. In \cref{ex12}, we compare LSN with  several state-of-the-art methods, including the Fourier frequency-based ffPINN and sfPINN, the BS-equation-specific IPINN, and the Lie symmetry-based LPS, to comprehensively assess its performance.


\subsubsection{Comparison with PINNs}\label{ex11}

\begin{table}[t!]
\centering
\captionsetup{justification=centering}
\caption{Experimental Parameter Configuration.  Values in parentheses represent weights for the enlarged collocation data set, while values not in parentheses represent weights for the small collocation data set.}
\label{tab:111}
\vspace{5pt} 
\begin{adjustbox}{max width=\columnwidth, center}
\begin{tabular}{
  cccccccc 
}
\toprule
{\multirow{2}{*}{\shortstack{RFR\\(r)}}} & {\multirow{2}{*}{\shortstack{volatility \\ ($\sigma$)}}} & \multicolumn{4}{c}{weight} & {\multirow{2}{*}{\shortstack{learning rate $(lr)$}}} & {\multirow{2}{*}{\shortstack{iteration}}} \\ 
\cmidrule(lr){3-6}
& & {$\lambda_1$} & {$\lambda_2$} & {$\lambda_3$} & {$\lambda_4$} & & \\
\midrule
0.1 & 0.4 & 0.001(0.001) & 1(1) & 0.1(0.1) & 1(0.1) & 0.001 & 200,000 \\
0.1 & 0.5 & 0.001(0.0001) & 1(1) & 0.1(0.1) & 10(0.01) & 0.001 & 200,000 \\
0.11 & 0.4 & 0.001(0.001) & 1(1) & 0.1(0.1) & 1(1) & 0.001 & 200,000 \\
\bottomrule
\end{tabular}
\end{adjustbox}
\end{table}

\textbf{Experimental Design.}
We conduct comparative experiments between LSN and baseline PINNs under the following four sets of hyperparameter setups.

For the first configuration, we chose $r= 0.1$, $\sigma = 0.05$, a learning rate decay rate $\Gamma=0.99$, and  $Iterations = 50,000$, the weight for LSN's loss function  was set as $\lambda_1 = 0.001$, $\lambda_2 = 1-\lambda_1 = 0.999$, $\lambda_3 = 0.001$ and $\lambda_4 = 0.001$, while for PINNs, the weight  was set as $\lambda_1 = 0.001$, $\lambda_2 = 0.999$ and $\lambda_3 = 0.001$.  For the second configuration, we select $r= 0.1$, $\sigma = 0.2$, a learning rate decay rate  of $\Gamma=0.95$, and $Iterations = 90,000$. The weights for the loss functions remains the same as the first configuration.  
For the third setting, the experimental parameter settings for the small collocation  dataset under other parameters are given in the \cref{tab:111}. Regarding the fourth setting, the experimental parameter settings for the enlarged collocation data set are provided in the  \cref{tab:111}, with values in parentheses.

\begin{figure*}[t!]
\vspace{-0.4cm}
\centering
\subfigbottomskip=0.pt
\subfigure
{\includegraphics[width=0.23\textwidth]{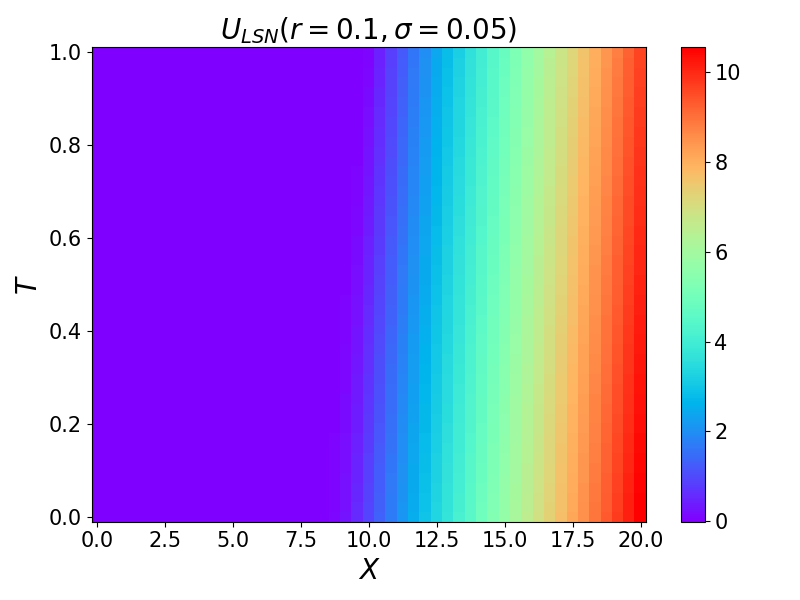}}
\subfigure
{\includegraphics[width=0.23\textwidth]{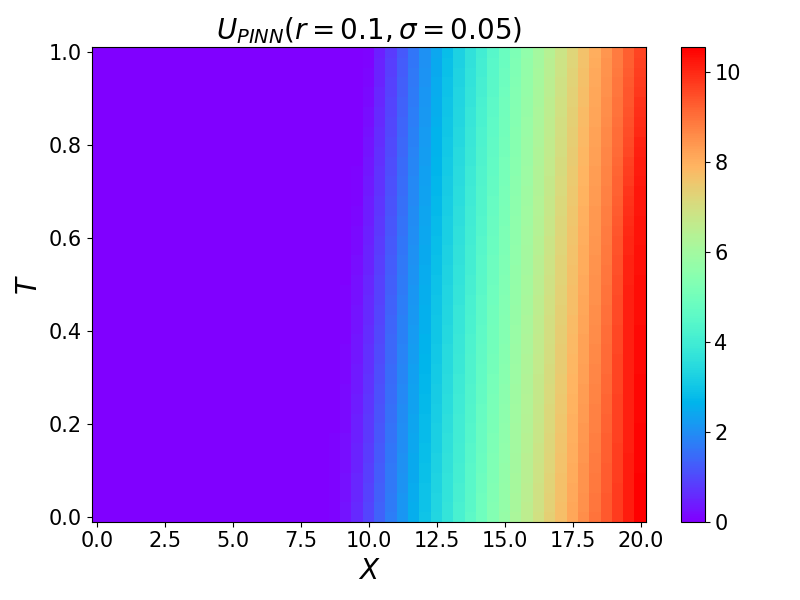}}
\subfigure
{\includegraphics[width=0.23\textwidth]{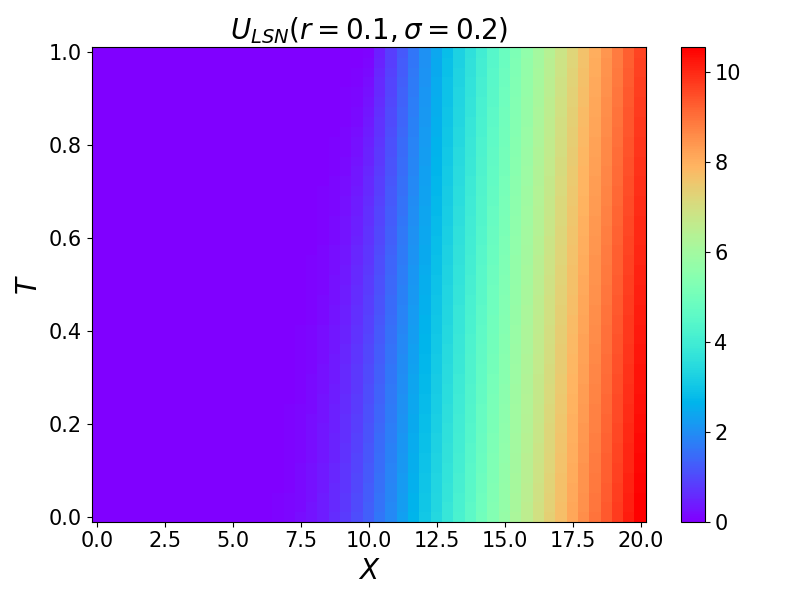}}
\subfigure
{\includegraphics[width=0.23\textwidth]{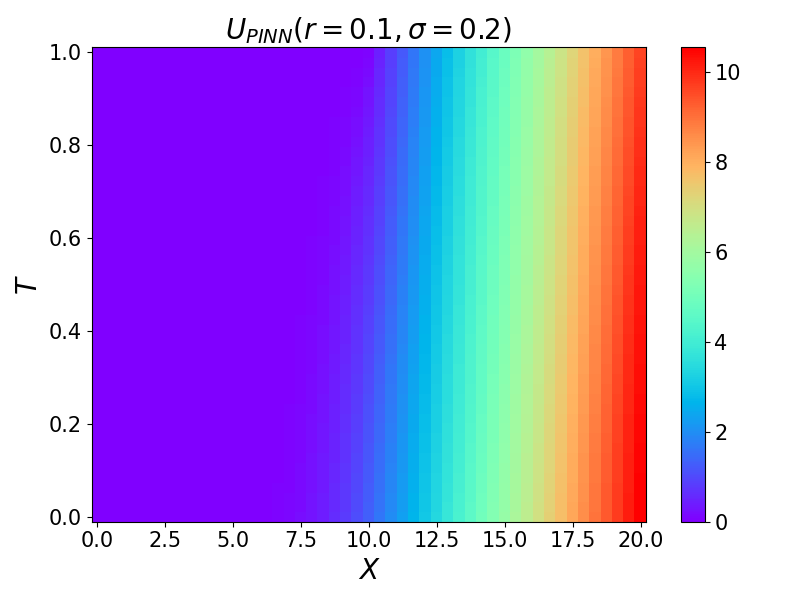}}
\subfigure
{\includegraphics[width=0.23\textwidth]{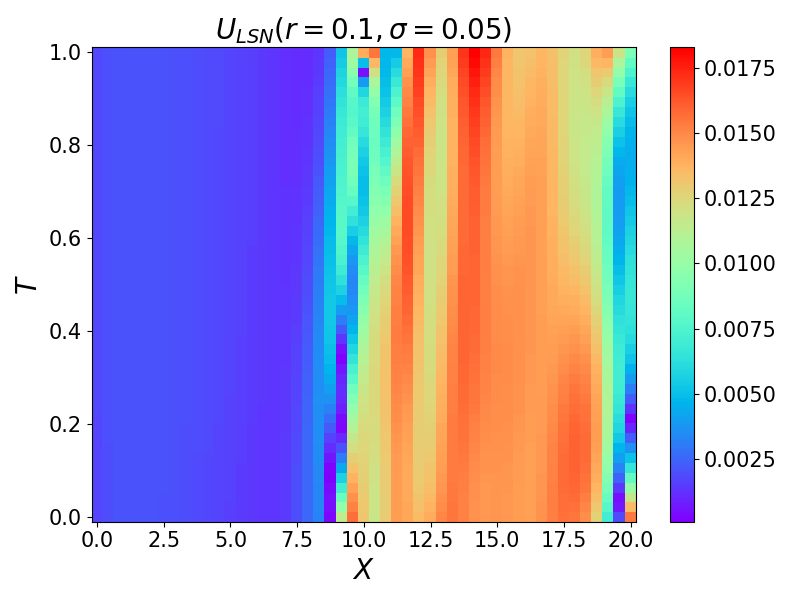}}
\subfigure
{\includegraphics[width=0.23\textwidth]{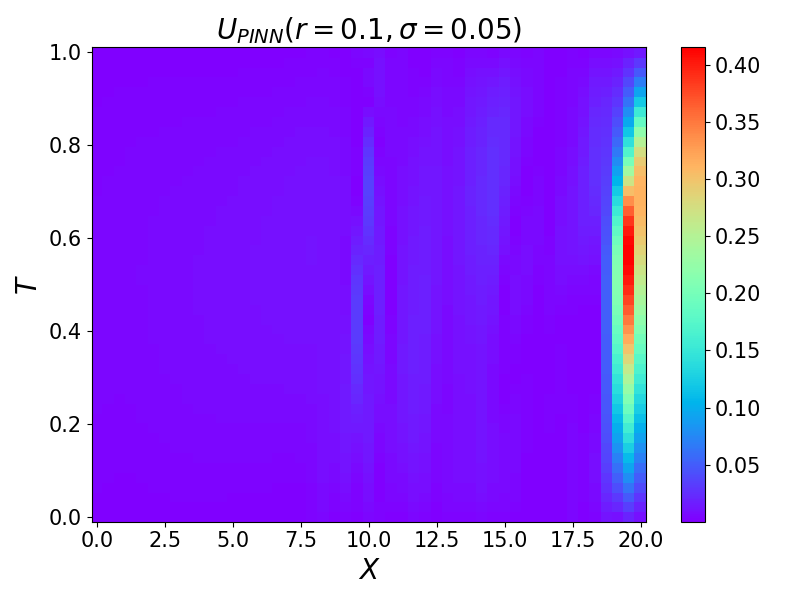}}
\subfigure
{\includegraphics[width=0.23\textwidth]{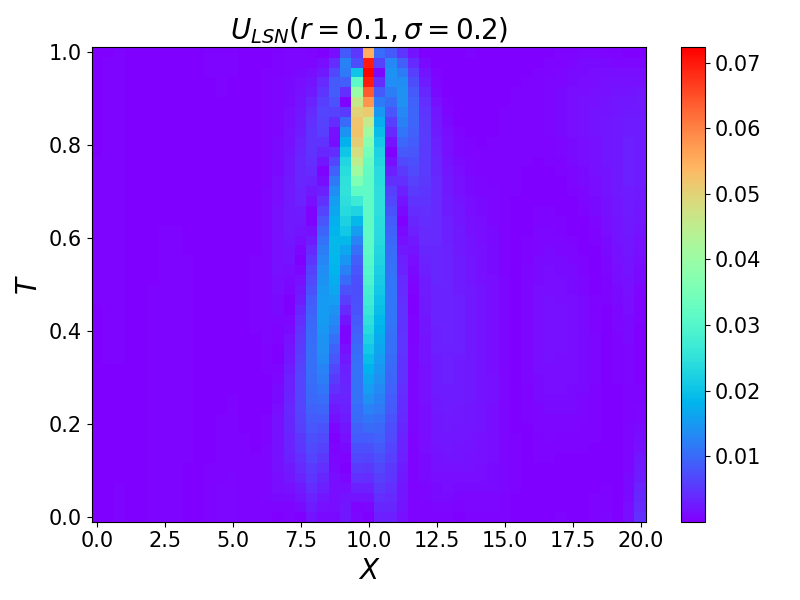}}
\subfigure
{\includegraphics[width=0.23\textwidth]{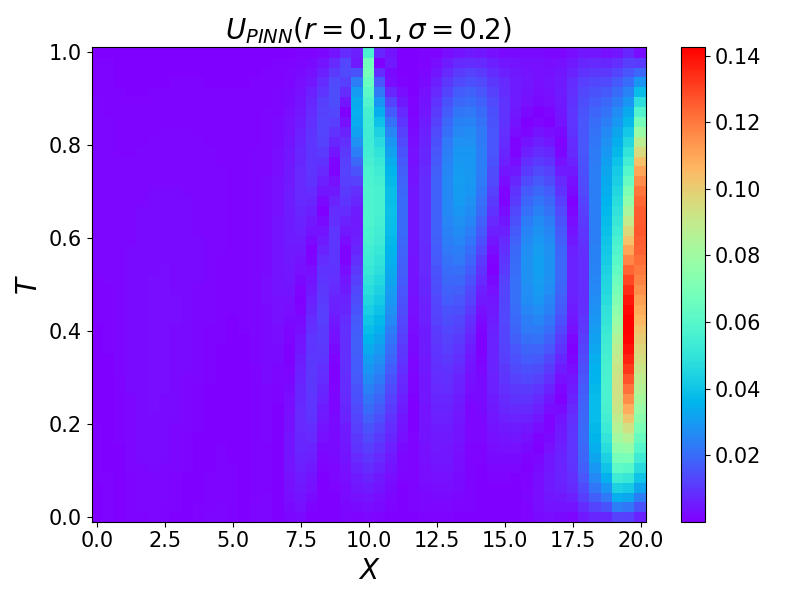}}
\caption{Visual representations of numerical solutions obtained using LSN and PINNs, along with absolute errors compared to the exact solution. The configuration of parameters is as follows: (1) $r=0.1$, $\sigma=0.05$, $\Gamma=0.99$, and  $Iterations=50,000$ (the left two column); (2) $r=0.1$, $\sigma=0.2$, $\Gamma=0.95$, and  $Iterations=90,000$ (the right column). }
\label{fig:sol}
\end{figure*}
\begin{figure*}[t!]
\vspace{-0.4cm}
\centering
\subfigbottomskip=0.pt
\subfigure{\label{subfig:1}
\includegraphics[width=0.25\textwidth]{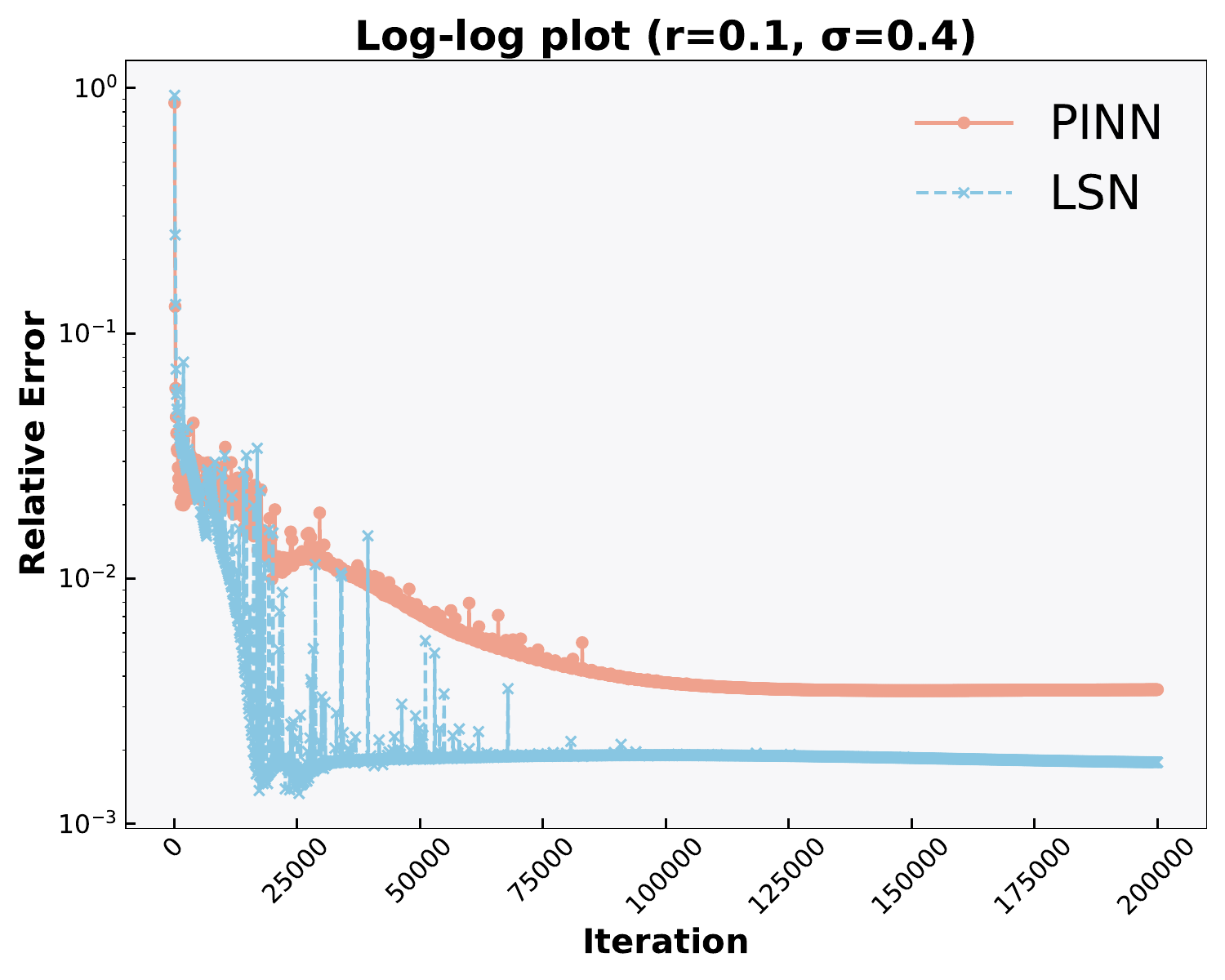}}
\subfigure{\label{subfig:2}
\includegraphics[width=0.25\textwidth]{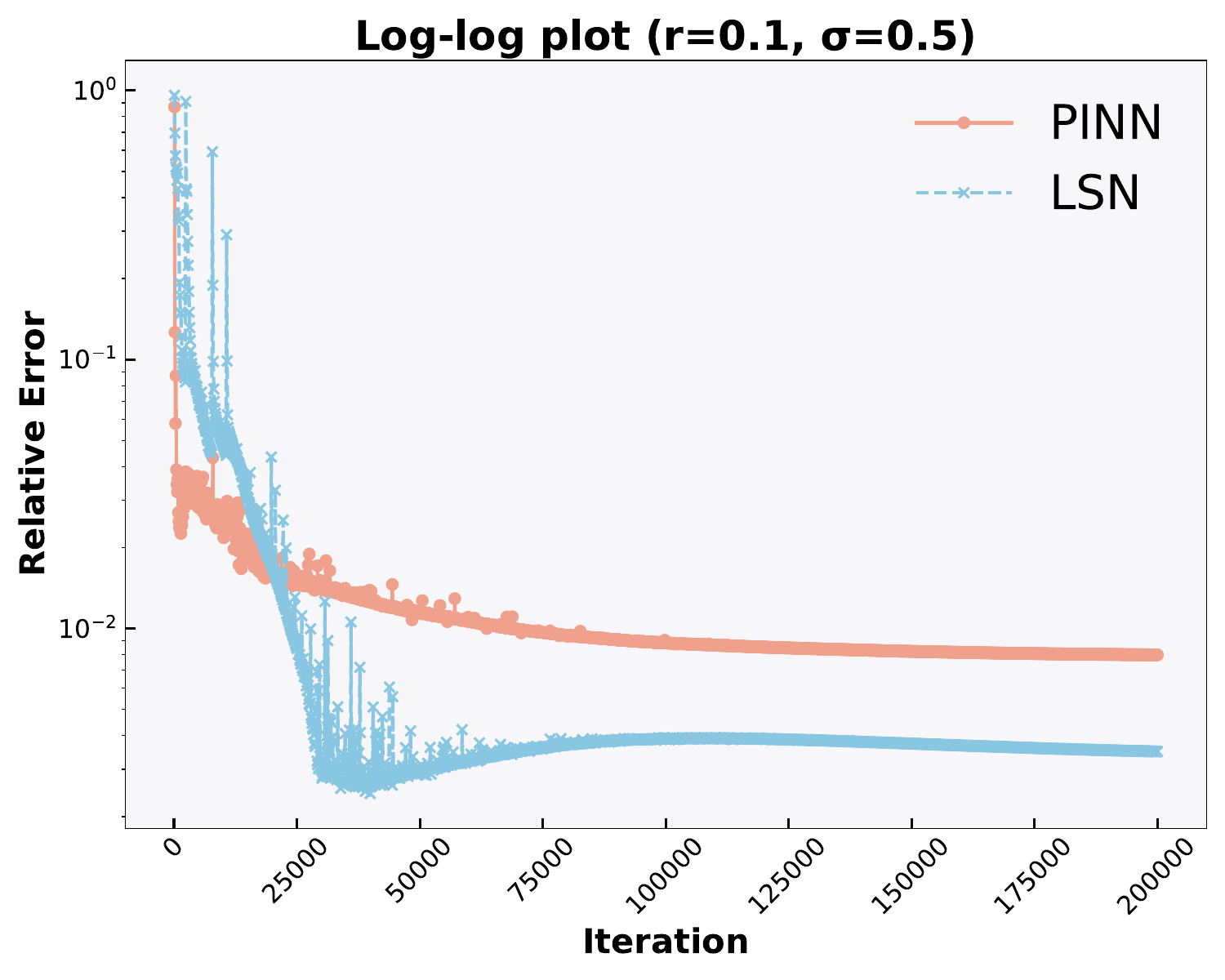}}
\subfigure{\label{subfig:3}
\includegraphics[width=0.25\textwidth]{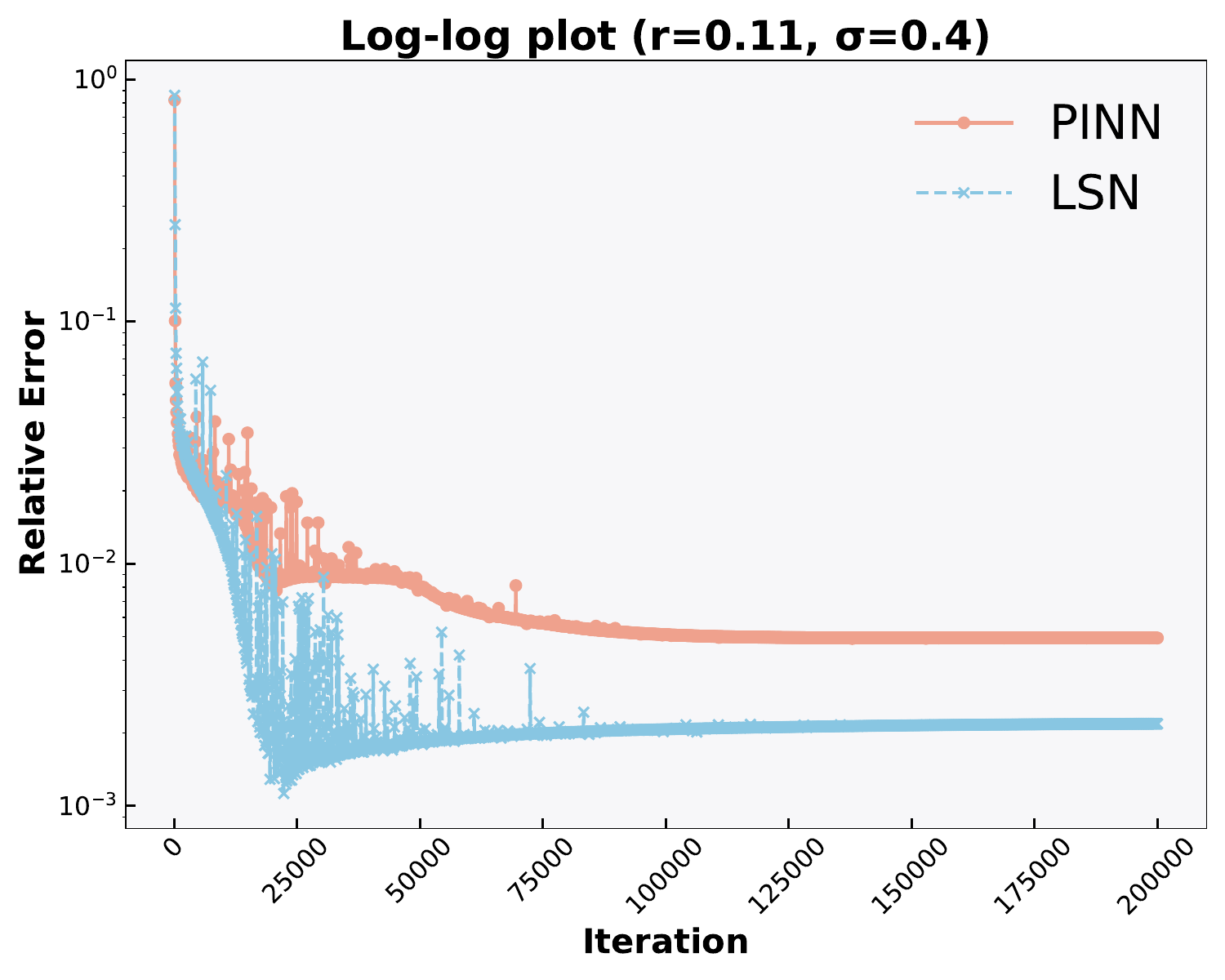}}
\subfigure{\label{subfig:4}
\includegraphics[width=0.25\textwidth]{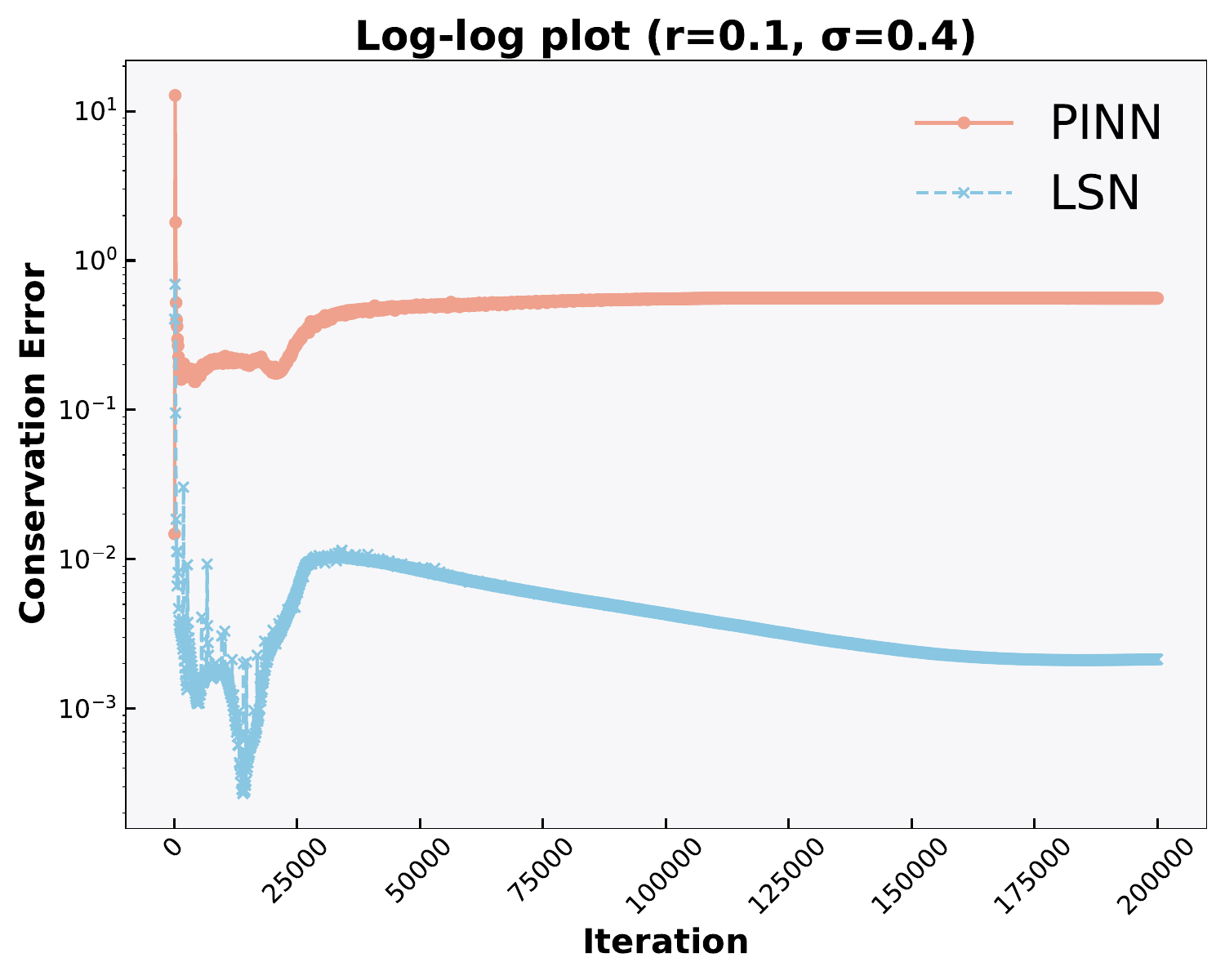}}
\subfigure{\label{subfig:5}
\includegraphics[width=0.25\textwidth]{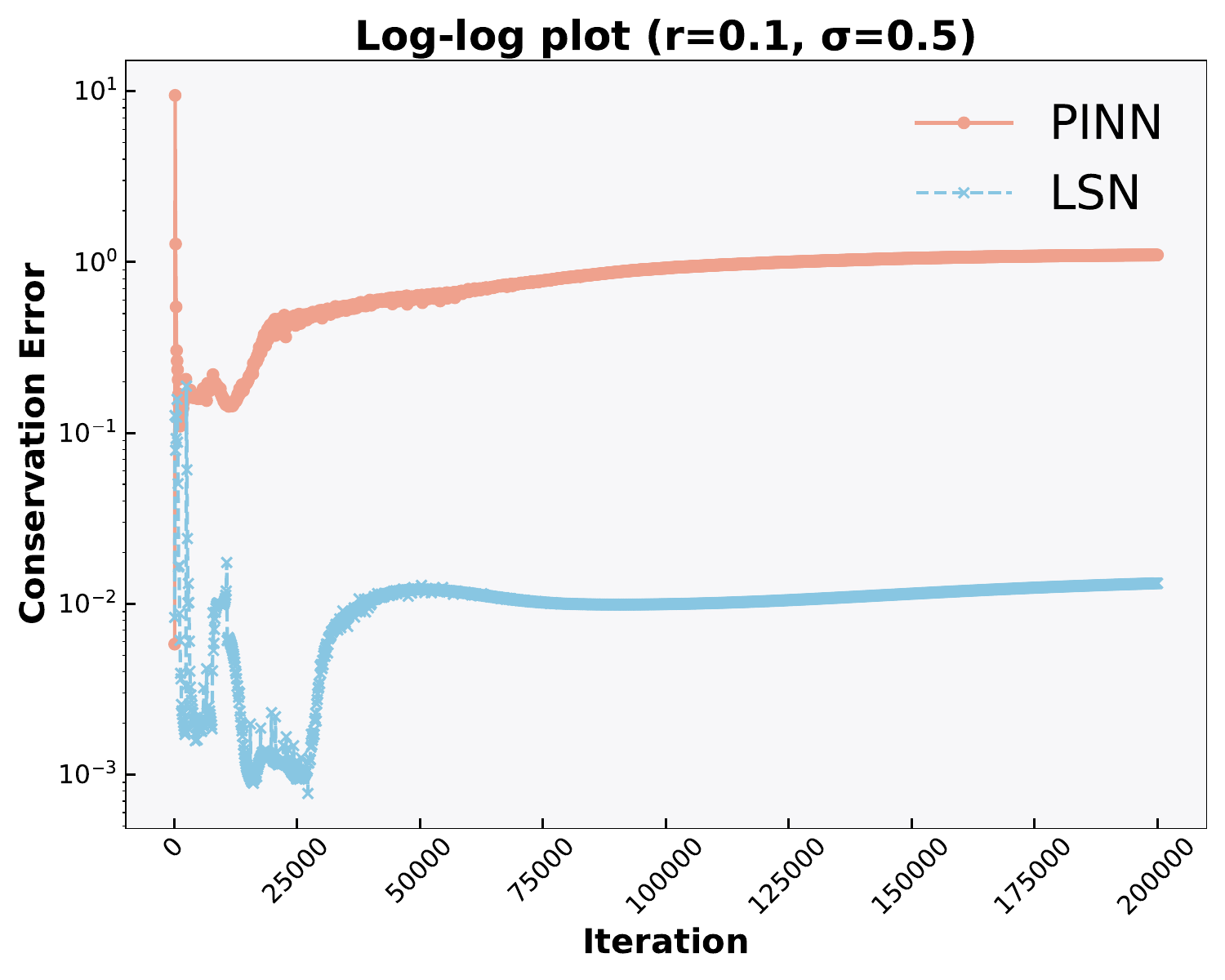}}
\subfigure{\label{subfig:6}
\includegraphics[width=0.25\textwidth]{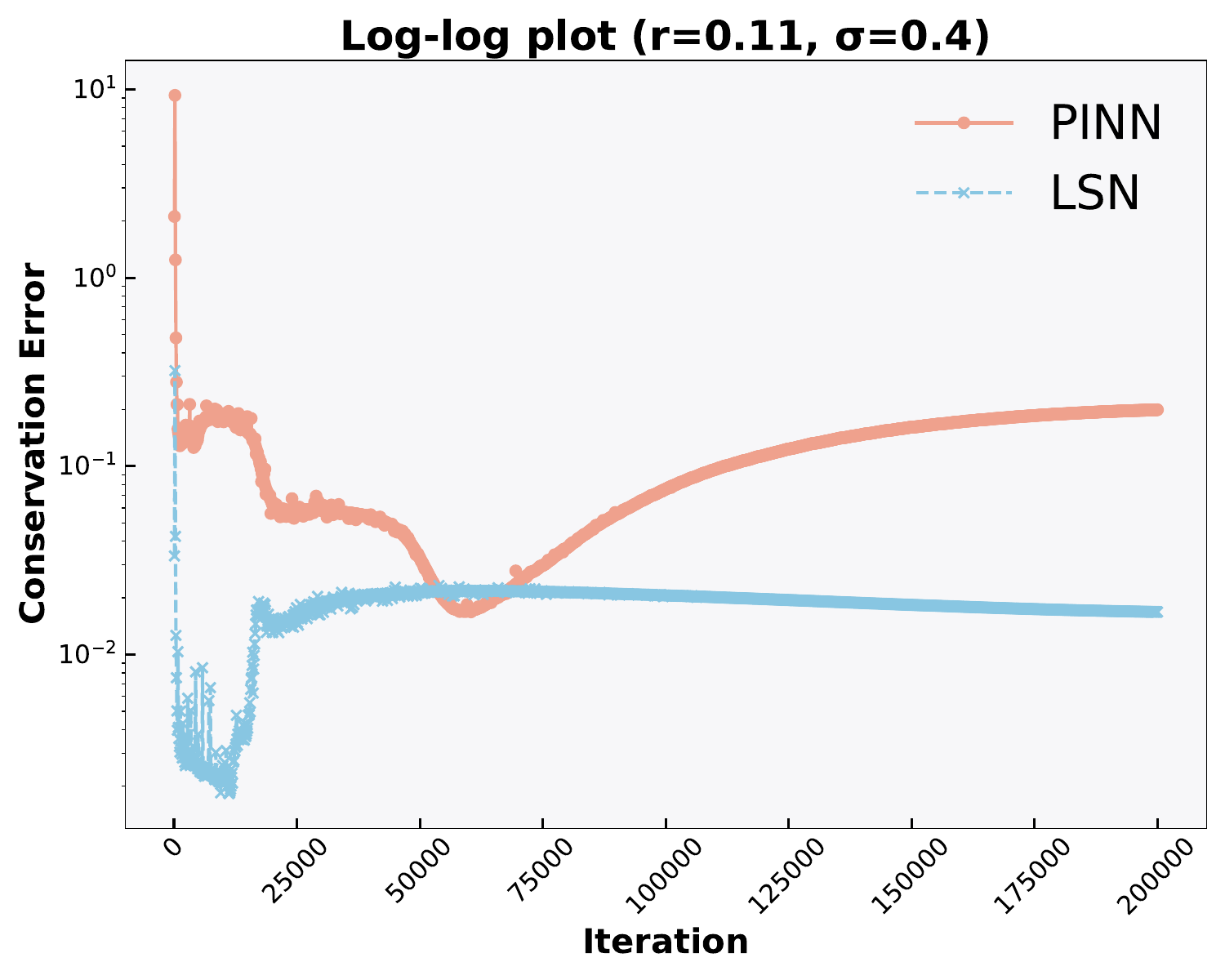}}
\caption{Log-log relative test error curves of PINNs and LSN under the third parameters configuration.}
\label{fig:small}
\vspace{-0.5cm}
\end{figure*}

\begin{table}[t!]
\centering
\captionsetup{justification=centering}
\caption{Comparisons of relative test error of LSN and PINNs after 80,000 training iterations. Here $\Gamma$ represents the rate of learning rate decay, while ``Factor" represents the ratio of the test error of PINNs to that of LSN.}
\label{tab:0daf}
\vspace{5pt} 
\begin{adjustbox}{scale=0.8, center} 
\begin{tabular}{
  c 
  S[table-format=1.2]
  S[table-format=1.1e1]
  S[table-format=1.1e1]
  S[table-format=1.1]
}
\toprule
{\multirow{2}{*}{\shortstack{RFR\\(r)}}} & {\multirow{2}{*}{\shortstack{Volatility \\ ($\sigma$)}}} & \multicolumn{2}{c}{Relative test error} & {\multirow{2}{*}{\shortstack{Factor}}} \\ 
\cmidrule(lr){3-4}
& & {PINNs} & {LSN} & \\
\midrule
0.1($\Gamma=0.99$) & 0.05 & 3.1e-3 & 4.5e-4 &\textbf{6.9} \\
0.1($\Gamma=0.95$) & 0.05 & 1.1e-3 & 5.4e-4 & 2.0 \\
0.1($\Gamma=0.95$) & 0.2 & 5.5e-3 & 1.6e-3 & 3.4 \\
0.1($\Gamma=0.95$) & 0.4 & 3.5e-3 & 1.3e-3 & 2.6 \\
0.1($\Gamma=0.95$) & 0.5 & 8.0e-3 & 2.4e-3 & 3.3 \\
0.11($\Gamma=0.95$) & 0.4 & 5.0e-3 & 1.1e-3 & 4.4 \\
\bottomrule
\end{tabular}
\end{adjustbox}
\end{table}

We visualize the numerical solutions obtained on the test set using these two sets of hyperparameters in  \cref{fig:sol}. In identical experimental configurations, LSN outperforms PINNs, achieving a point-wise error magnitude of $10^{-2}$ in contrast to $10^{-1}$ observed with PINNs.

The error curves for LSN and PINNs with respect to the number of training steps for the third set of parameters are presented in  \cref{fig:small}. 
It can be observed from  \cref{fig:small} that the conservation error of PINNs is of the order $10^{-1}$, whereas LSN can achieve an error on the magnitude $10^{-4}$. Furthermore, the relative error of LSN also  consistently  remains lower than that of PINNs.


In the fourth set of experiments, we increase the number of collocation data points to 10k. It can be observed from \cref{fig:big} that increasing the number of collocation data points improves the accuracy of both PINNs and LSN. Notably, the parameters $r=0.11,\,\sigma=0.4$, the test error magnitude of PINNs and reaches $10^{-3}$ and $10^{-3}$, respectively.

To provide a more intuitive demonstration of the superiority of LSN, we consider the test accuracy of the method under different equation parameters, as shown in  \cref{tab:0daf}. Compared with vanilla PINNs, LSN can reduce the relative test error by up to \textbf{7} times, with an average improvement of \textbf{2-4} times.

In addition, we conducted additional experiments on training efficiency under the parameter settings \( r = 0.2 \) and \( \sigma = 0.2 \). In these experiments, we compared the cumulative training time required for LSN and PINNs to achieve specified error thresholds. The results are summarized in Table \ref{tab:time_comparison}. As observed, LSN exhibited a consistent and significant efficiency advantage across all tested error thresholds. For instance, at an error threshold of  \( 7 \times 10^{-3} \), LSN achieved the target precision in \( 8.85 \times 10^2 \) seconds, whereas PINNs required \( 1.99 \times 10^3 \) seconds.  Despite the addition of regularization terms in LSN, which complicates the loss function to be optimized, the experimental results indicate that LSN does not converge more slowly than PINNs in terms of wall-clock time and simultaneously achieves higher accuracy.

\begin{figure*}[t!]
\vspace{-0.4cm}
\centering
\subfigbottomskip=0.pt
\subfigure{\label{subfig:1}
\includegraphics[width=0.25\textwidth]{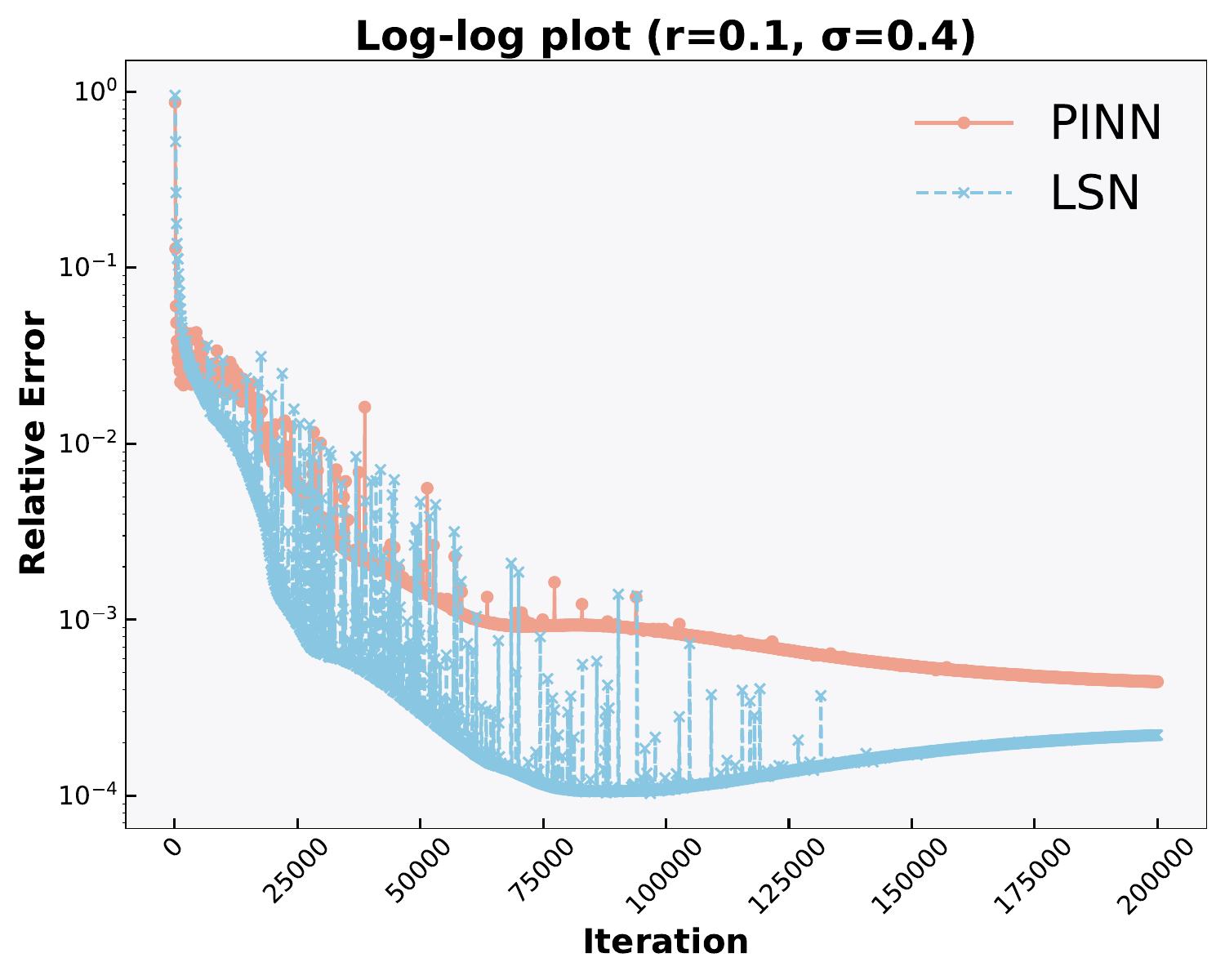}}
\subfigure{\label{subfig:2}
\includegraphics[width=0.25\textwidth]{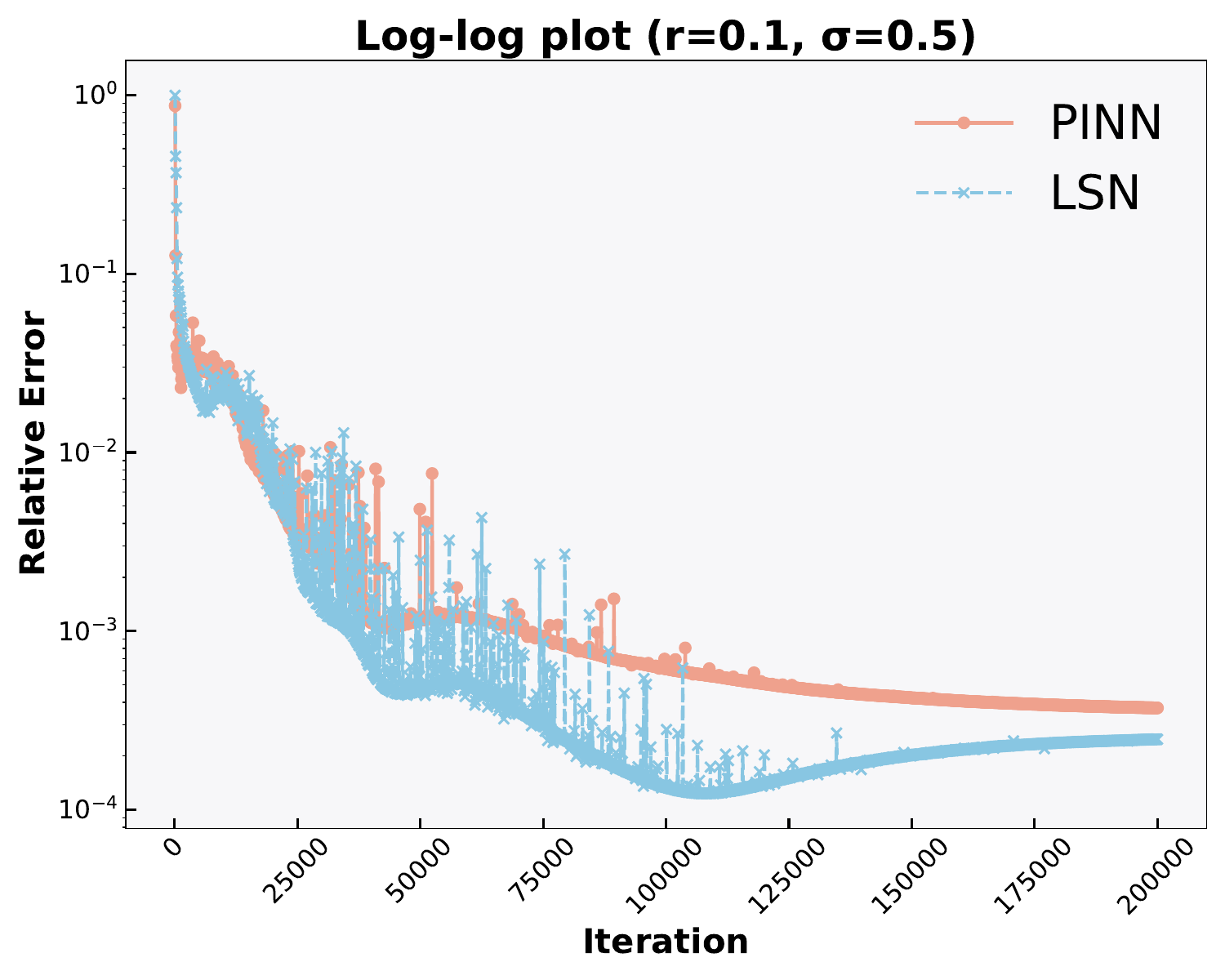}}
\subfigure{\label{subfig:1}
\includegraphics[width=0.25\textwidth]{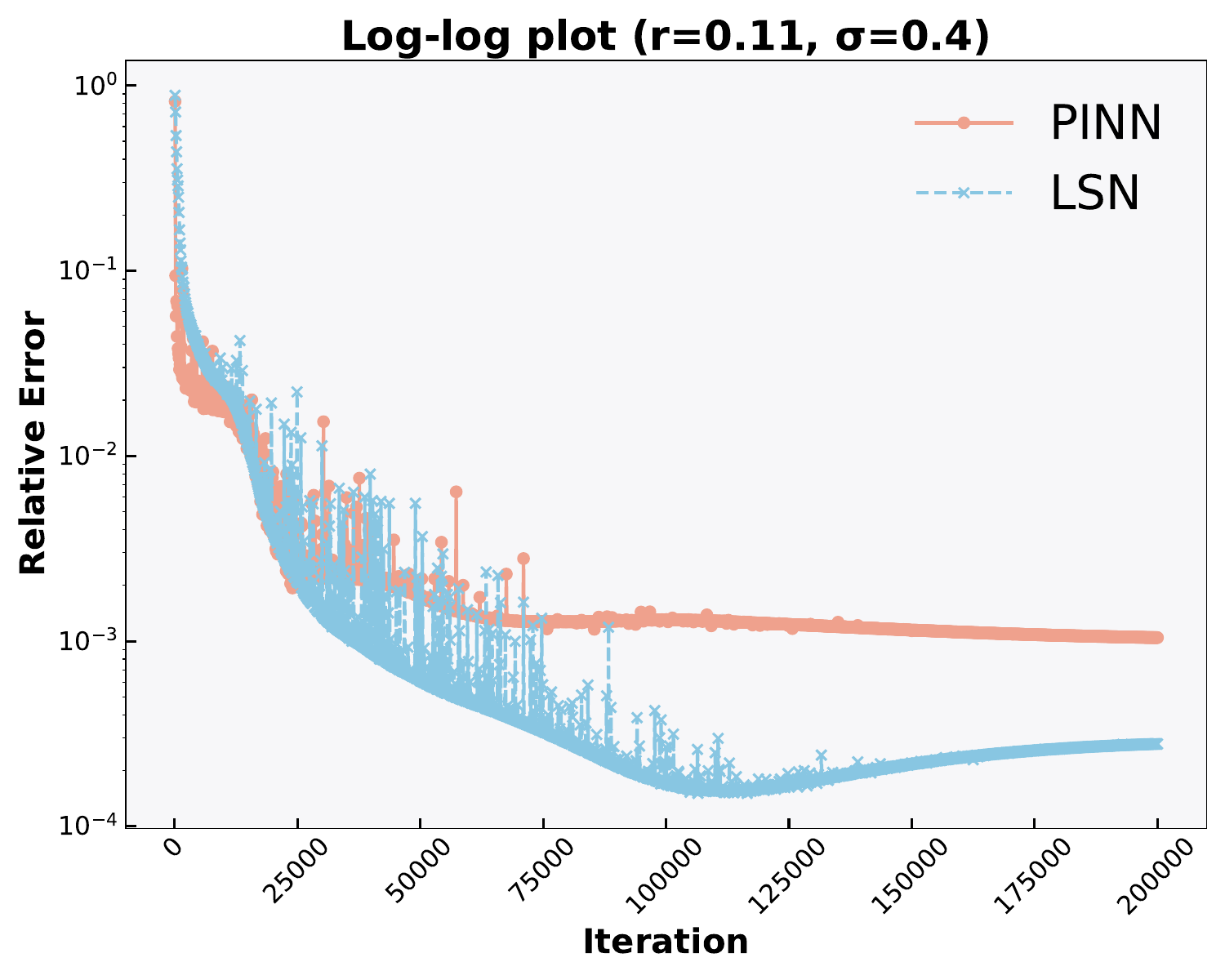}}
\subfigure{\label{subfig:1}
\includegraphics[width=0.25\textwidth]{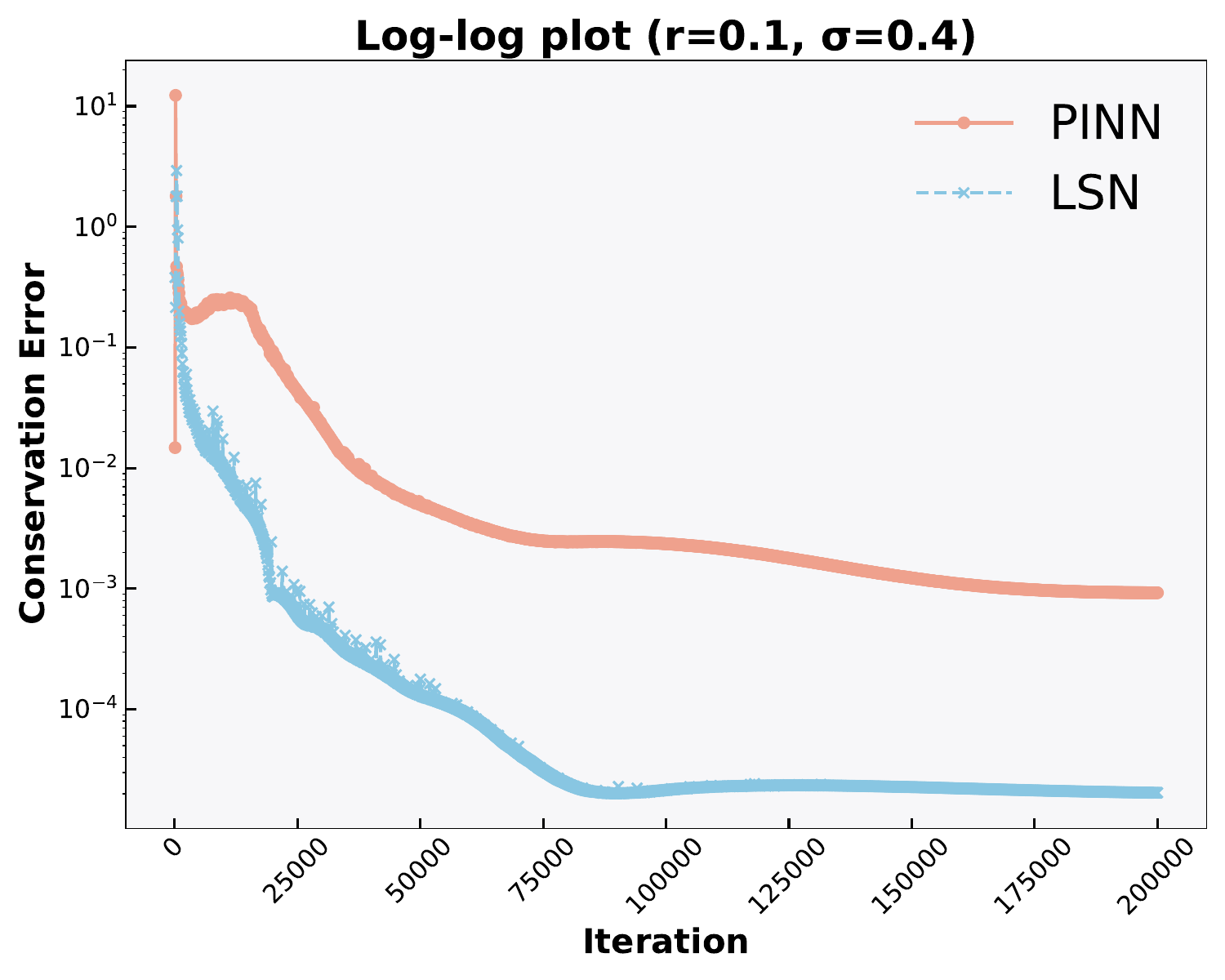}}
\subfigure{\label{subfig:2}
\includegraphics[width=0.25\textwidth]{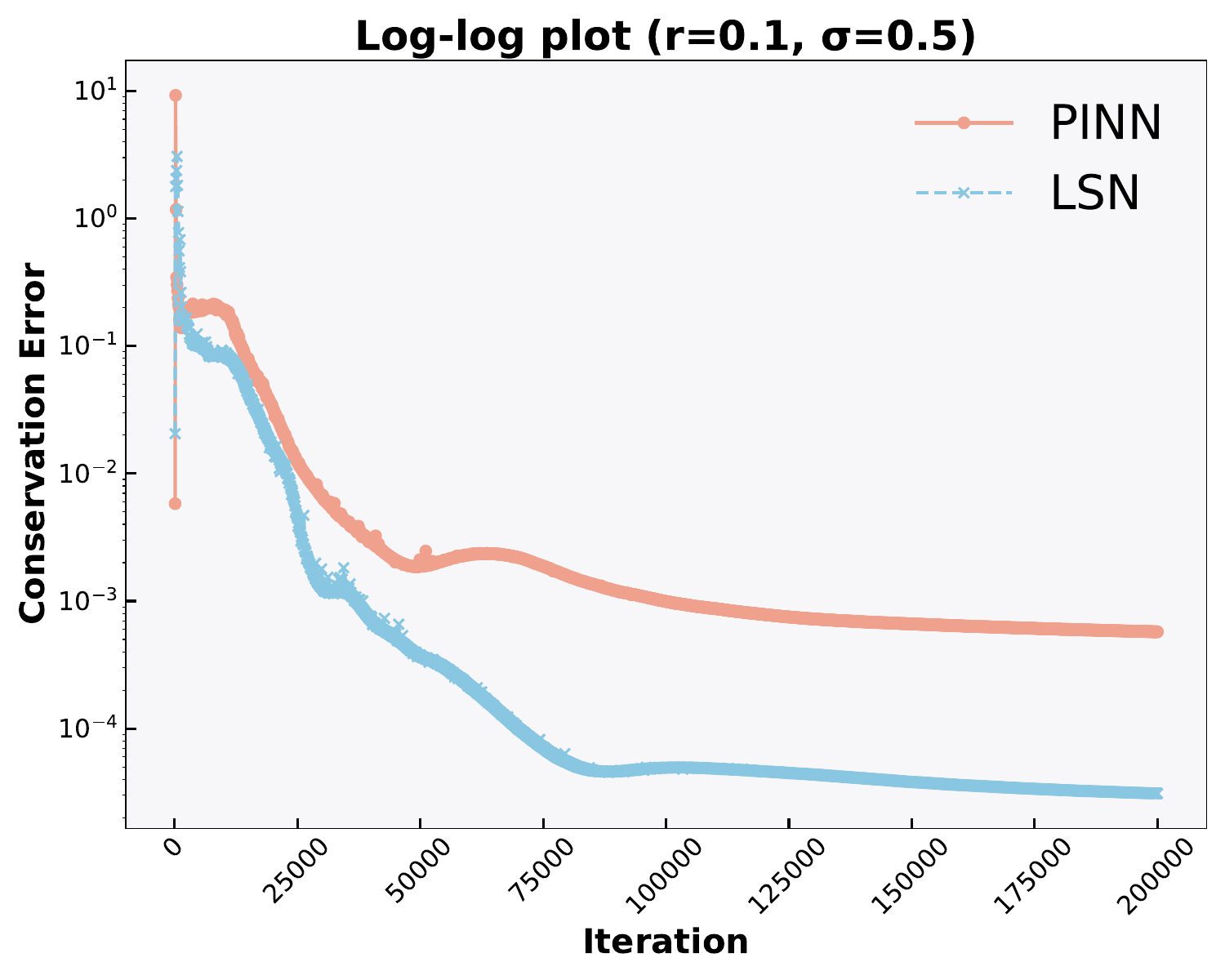}}
\subfigure{\label{subfig:1}
\includegraphics[width=0.25\textwidth]{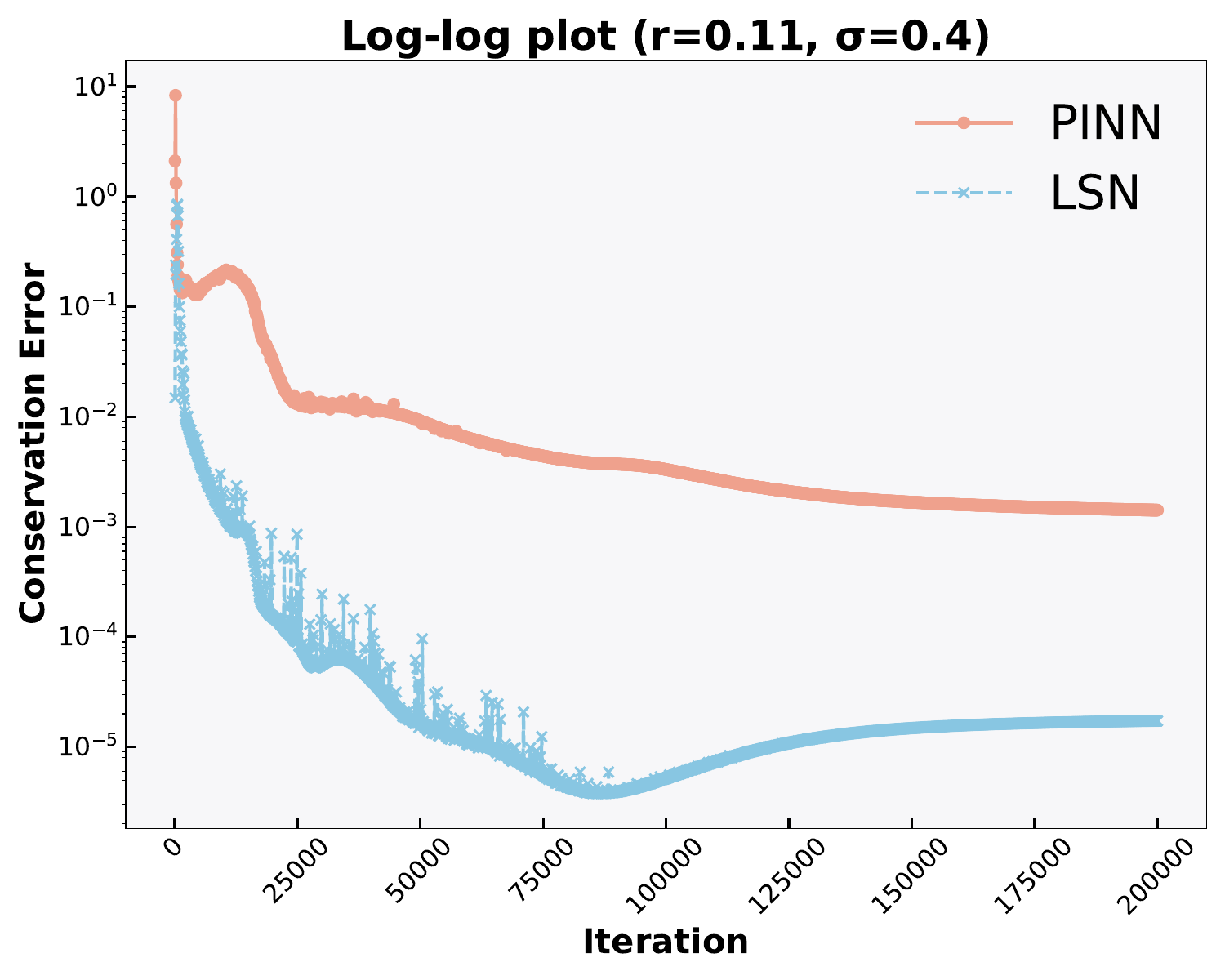}}
\caption{Log-log relative test error curves of PINNs and LSN  under the forth parameters configuration.}
\label{fig:big}
\vspace{-0.4cm}
\end{figure*}
\begin{table}[t!]
\centering
\captionsetup{justification=centering}
\caption{Comparison of cumulative training time for LSN and PINNs to achieve different error thresholds.}
\label{tab:time_comparison}
\vspace{5pt} 
\begin{adjustbox}{width=0.4\textwidth, center} 
\begin{tabular}{
  S[table-format=1.3e-1]
  S[table-format=1.2e1, round-mode=places, round-precision=2]
  S[table-format=1.2e1, round-mode=places, round-precision=2]
}
\toprule
{\multirow{2}{*}{\shortstack{Error \\ Threshold}}} & \multicolumn{2}{c}{Cumulative Training Time (s)} \\ 
\cmidrule(lr){2-3}
& {LSN} & {PINN} \\
\midrule
7.00e-3 & 8.85e2 & 1.99e3 \\
6.00e-3 & 8.92e2 & 2.26e3 \\
5.00e-3 & 9.99e2 & 2.60e3 \\
4.00e-3 & 1.11e3 & 3.25e3 \\
3.00e-3 & 1.35e3 & 4.44e3 \\
\bottomrule
\end{tabular}
\end{adjustbox}
\end{table}

\subsubsection{Comparison with State-of-the-art Methods}\label{ex12}
We conduct comparative experiments between LSN and several state-of-the-art methods including IPINNs \citep{bai2022application}, sfPINNs \citep{wong2022learning}, ffPINNs \citep{wong2022learning} and LPS \citep{akhound2024lie}, under different equation parameter setups (i.e., different risk-free rate and volatility), following \citet{ankudinova2008numerical,shinde2012study,bai2022application}.

 \textbf{Experimental Design.}
All methods share the following hyperparameter setup, i.e., learning rate  $lr = 0.001$ and learning rate decay rate  $\Gamma = 0.95$. 
The training steps are set as 80,000 and 200,000, depending on the speed of convergence. 

 \cref{fig:f4} provides Log-log relative test error curves and function approximation results of LSN, PINNs, and PINNs variants. 
 The results demonstrate that the relative error of LSN consistently remains below those of vanilla PINNs and their variants across different experimental settings. Both sfPINNs and ffPINNs exhibit unsatisfactory performance under certain parameters, occasionally performing even worse than vanilla PINNs. This underperformance might be attributed to the fact that sfPINNs and ffPINNs are more suited to scenarios with sinusoidal-form solutions, thereby failing to effectively approximate the complex solution of the Black-Scholes equation \citep{wong2022learning}.
\begin{figure*}[t]
\centering
\subfigbottomskip=-0.pt
\subfigure
{\includegraphics[width=0.23\textwidth]{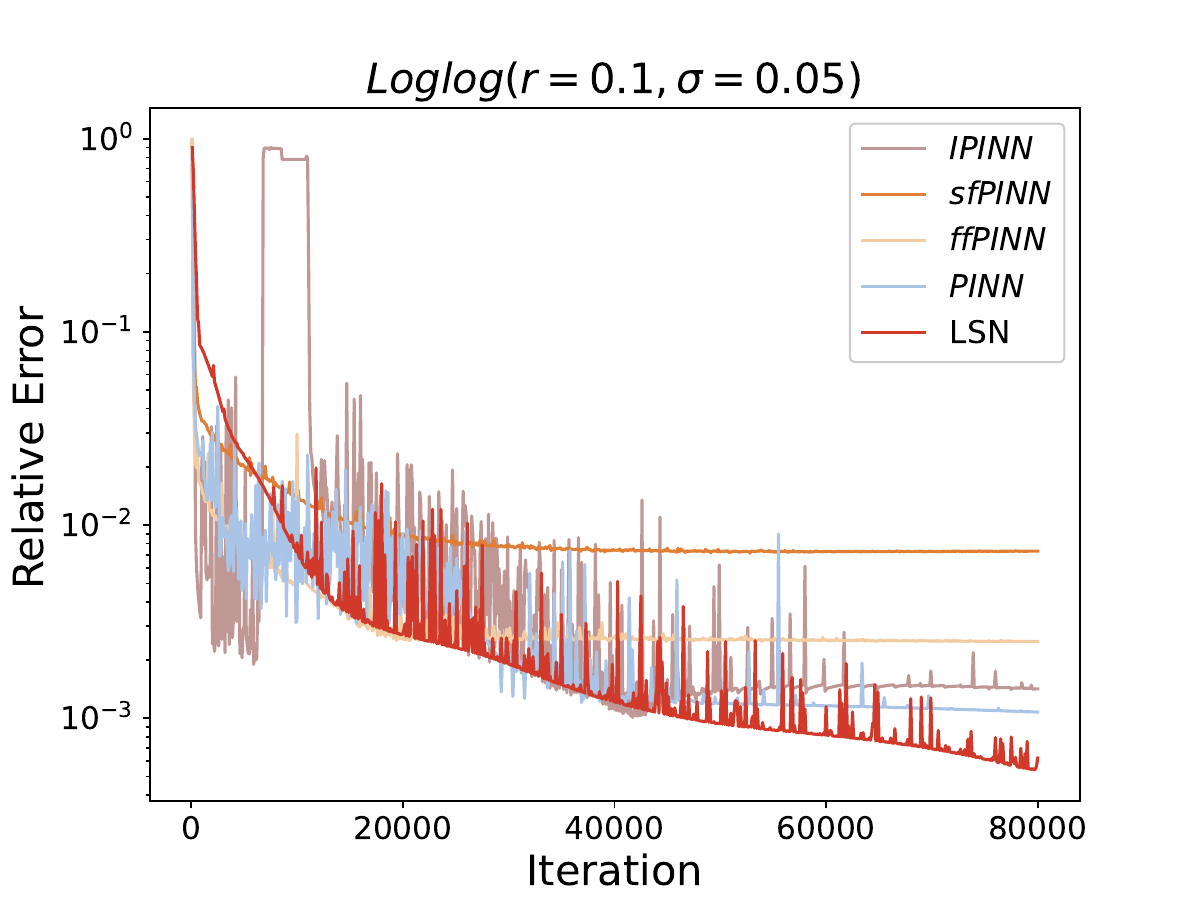}}
\subfigure
{\includegraphics[width=0.23\textwidth]{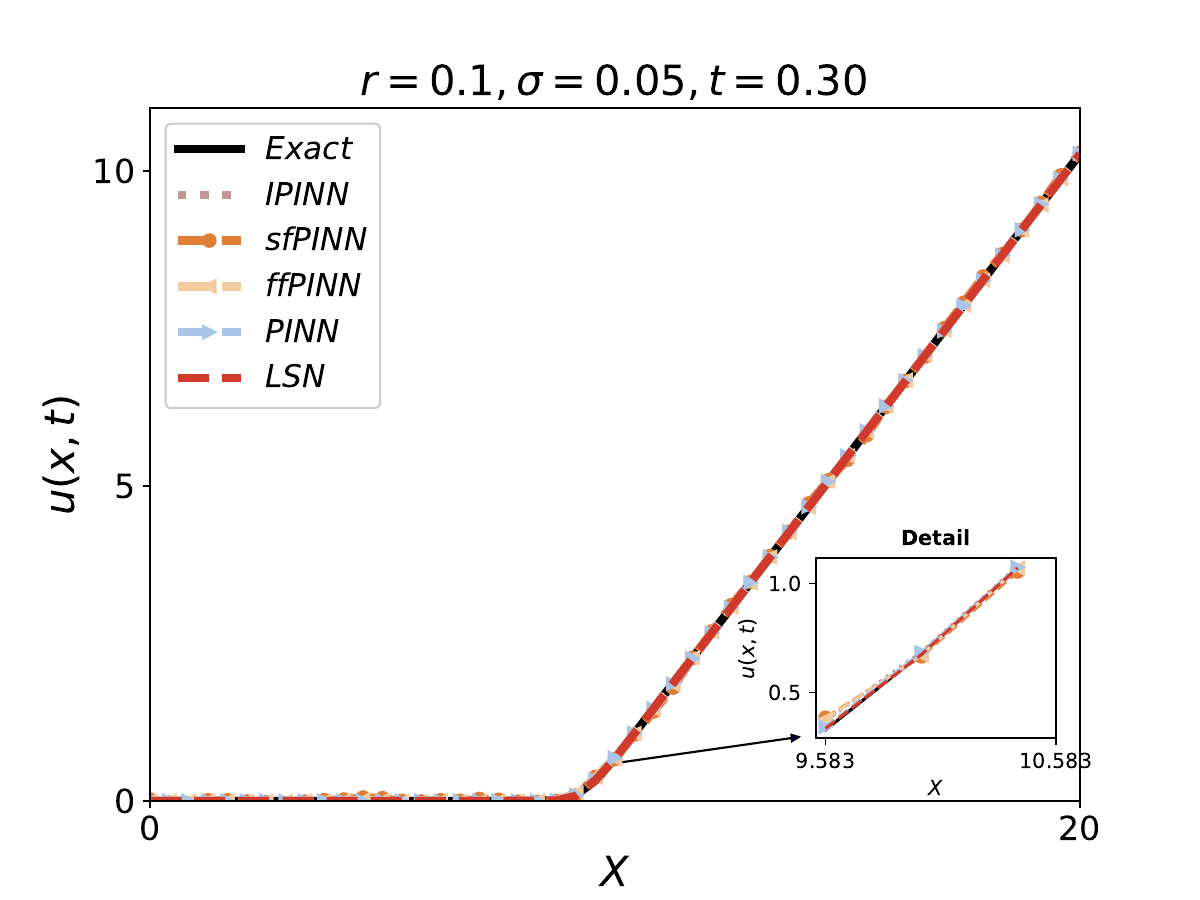}}
\subfigure
{\includegraphics[width=0.23\textwidth]{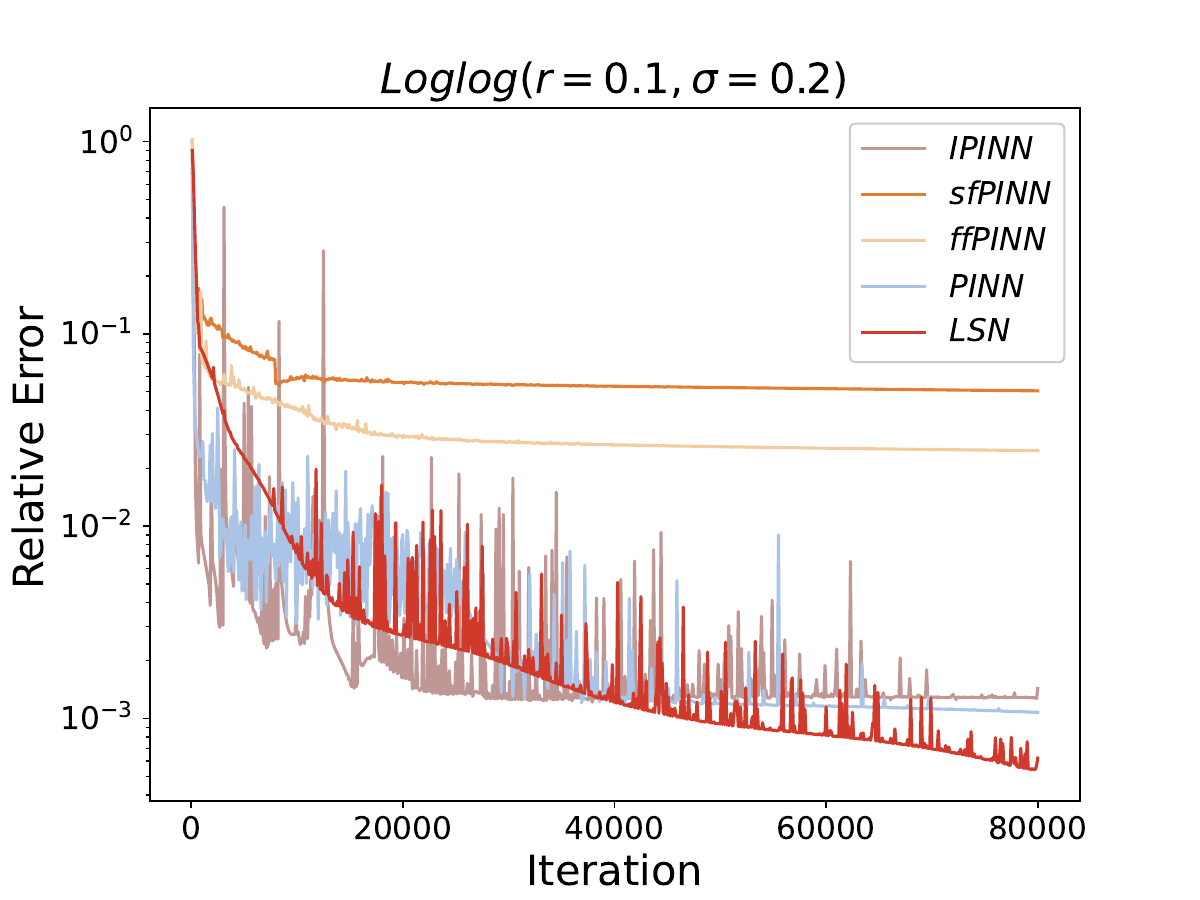}}
\subfigure
{\includegraphics[width=0.23\textwidth]{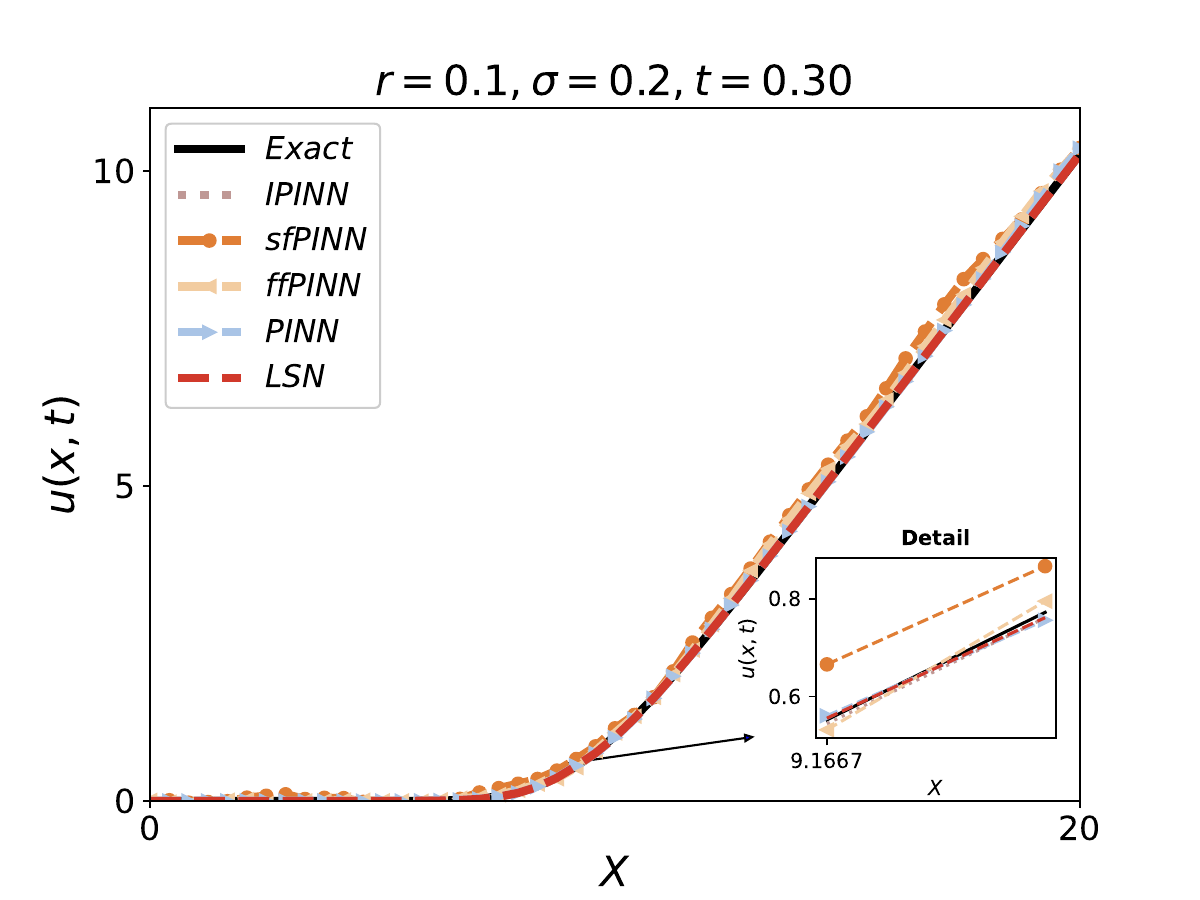}}
\caption{Log-log relative test error curves and function approximation results of LSN, PINNs, and PINNs variants. The error curves are shown in the first and third rows, while the function approximation results at $t = 0.30$ are presented in the second and fourth rows.}
\label{fig:f4}
\end{figure*}
\begin{figure*}[t]
\vspace{-0.4cm}
\centering
\subfigbottomskip=0.pt
\subfigure{\label{subfig:1}
\includegraphics[width=0.25\textwidth]{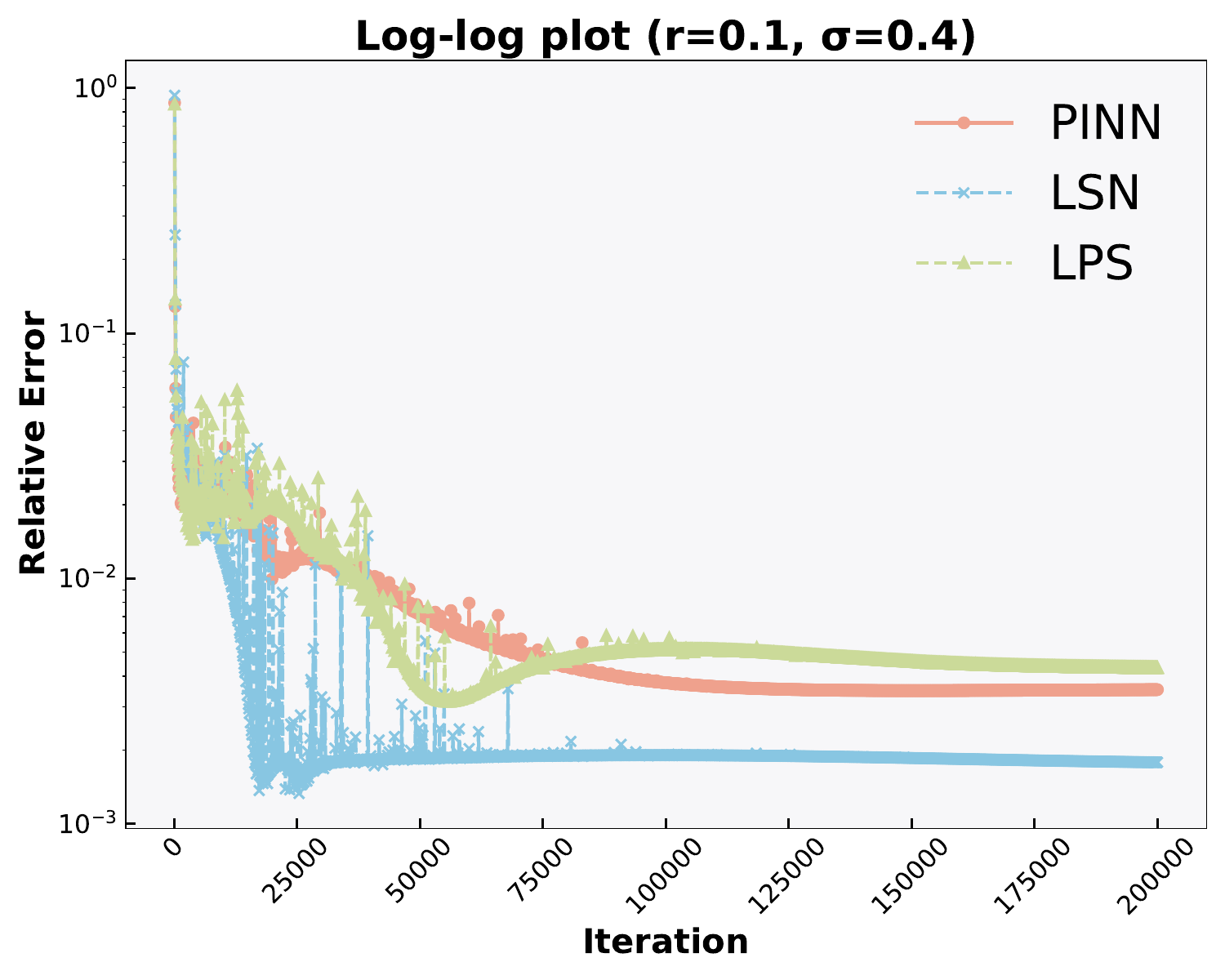}}
\subfigure{\label{subfig:2}
\includegraphics[width=0.25\textwidth]{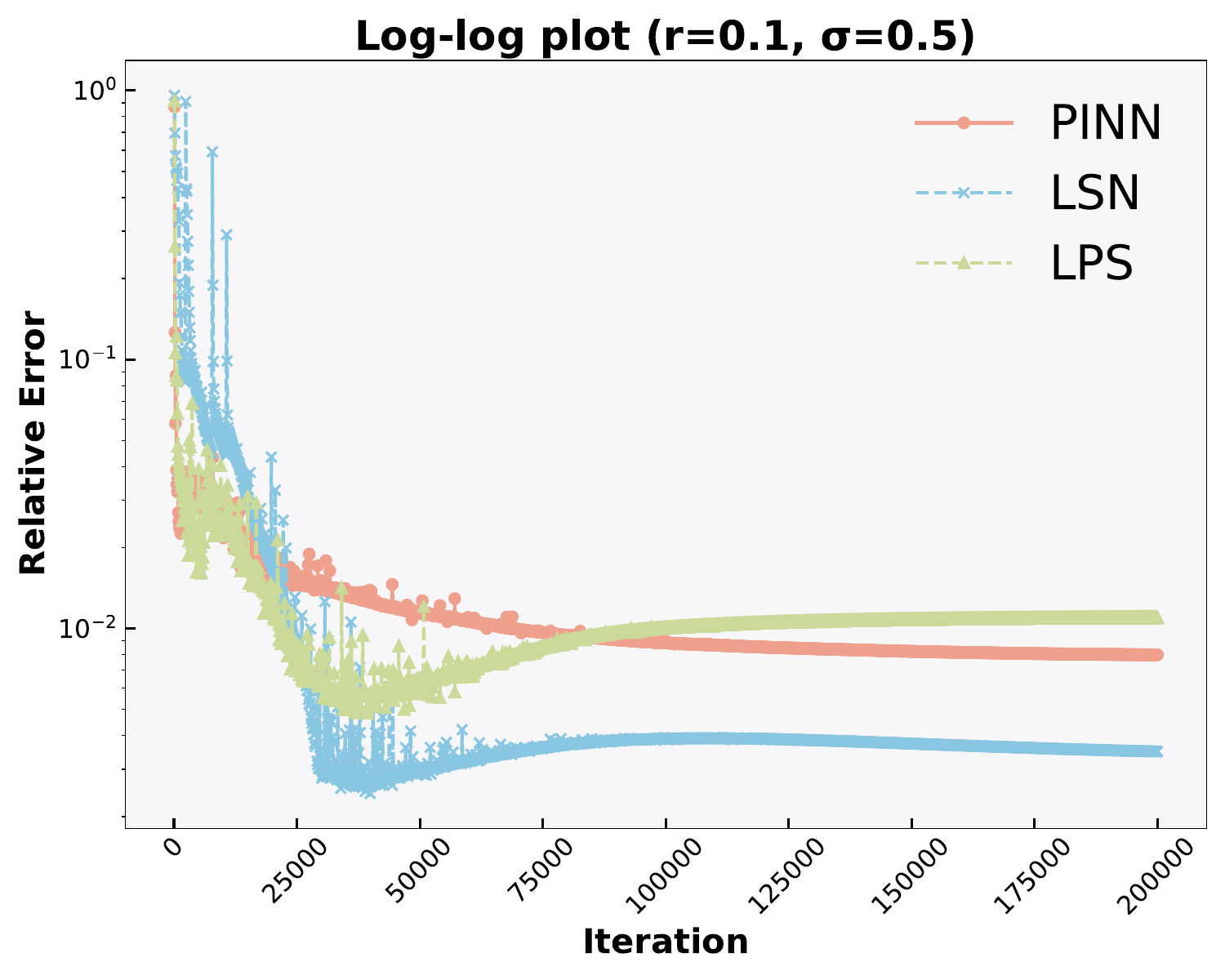}}
\subfigure{\label{subfig:1}
\includegraphics[width=0.25\textwidth]{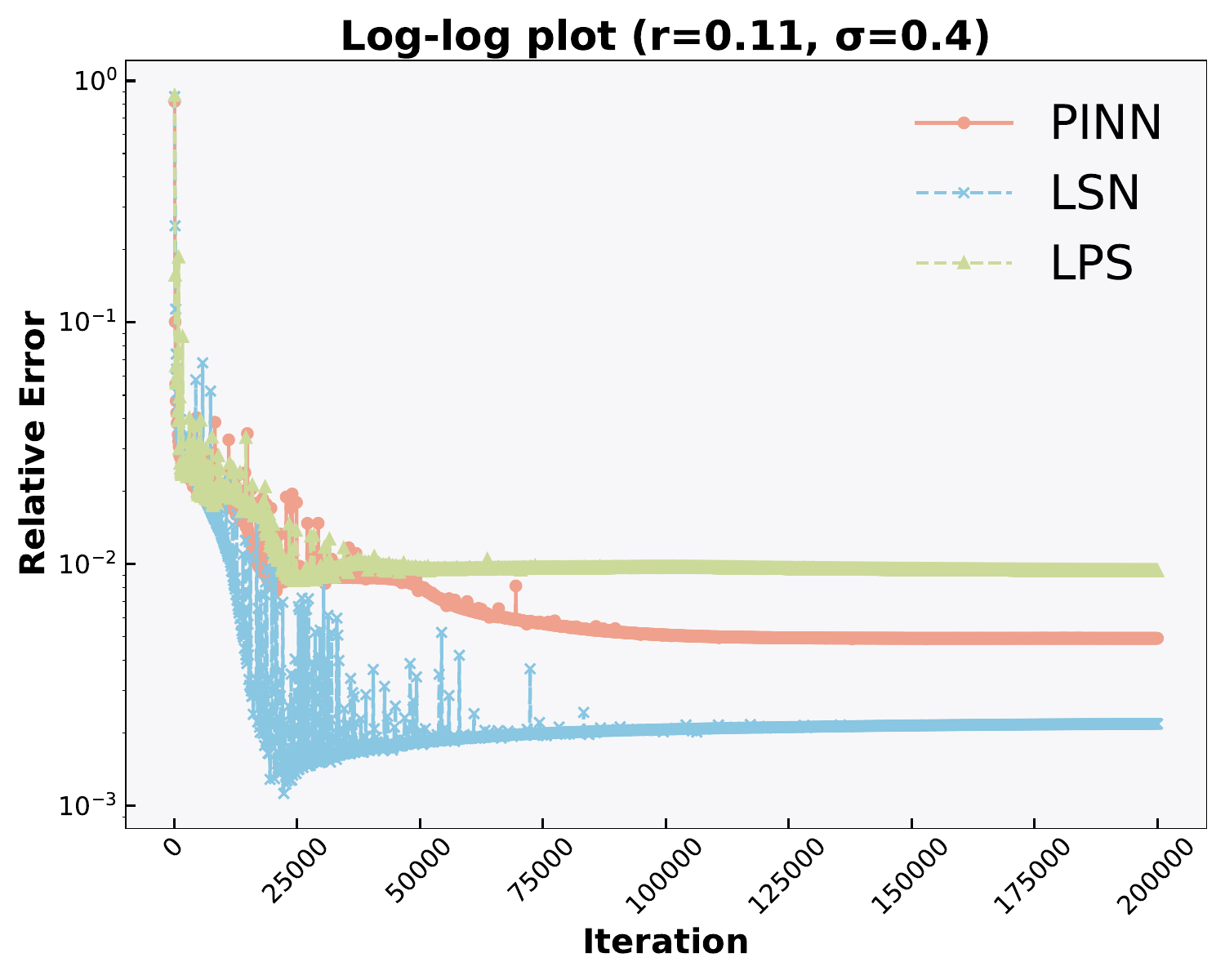}}
\subfigure{\label{subfig:1}
\includegraphics[width=0.25\textwidth]{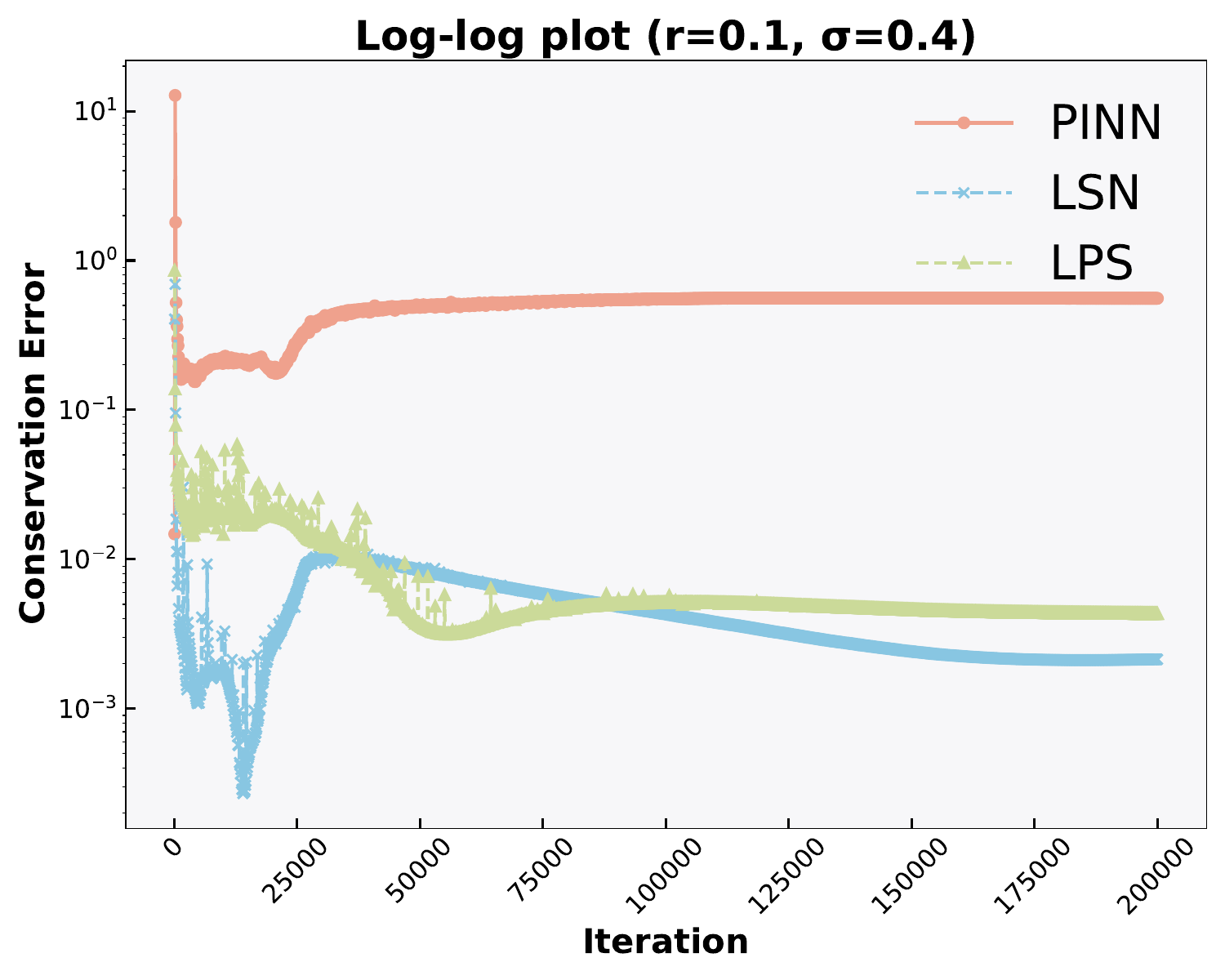}}
\subfigure{\label{subfig:2}
\includegraphics[width=0.25\textwidth]{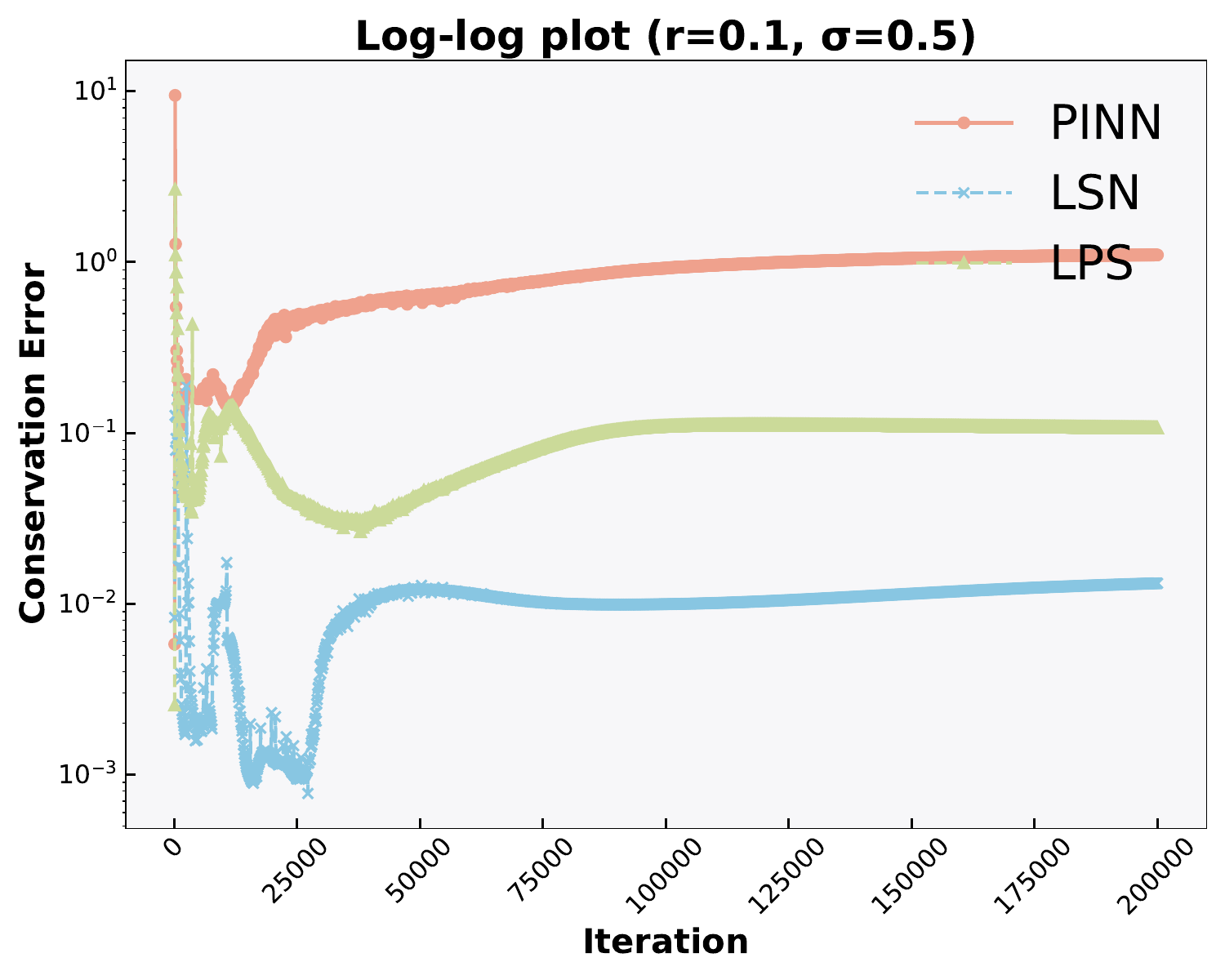}}
\subfigure{\label{subfig:1}
\includegraphics[width=0.25\textwidth]{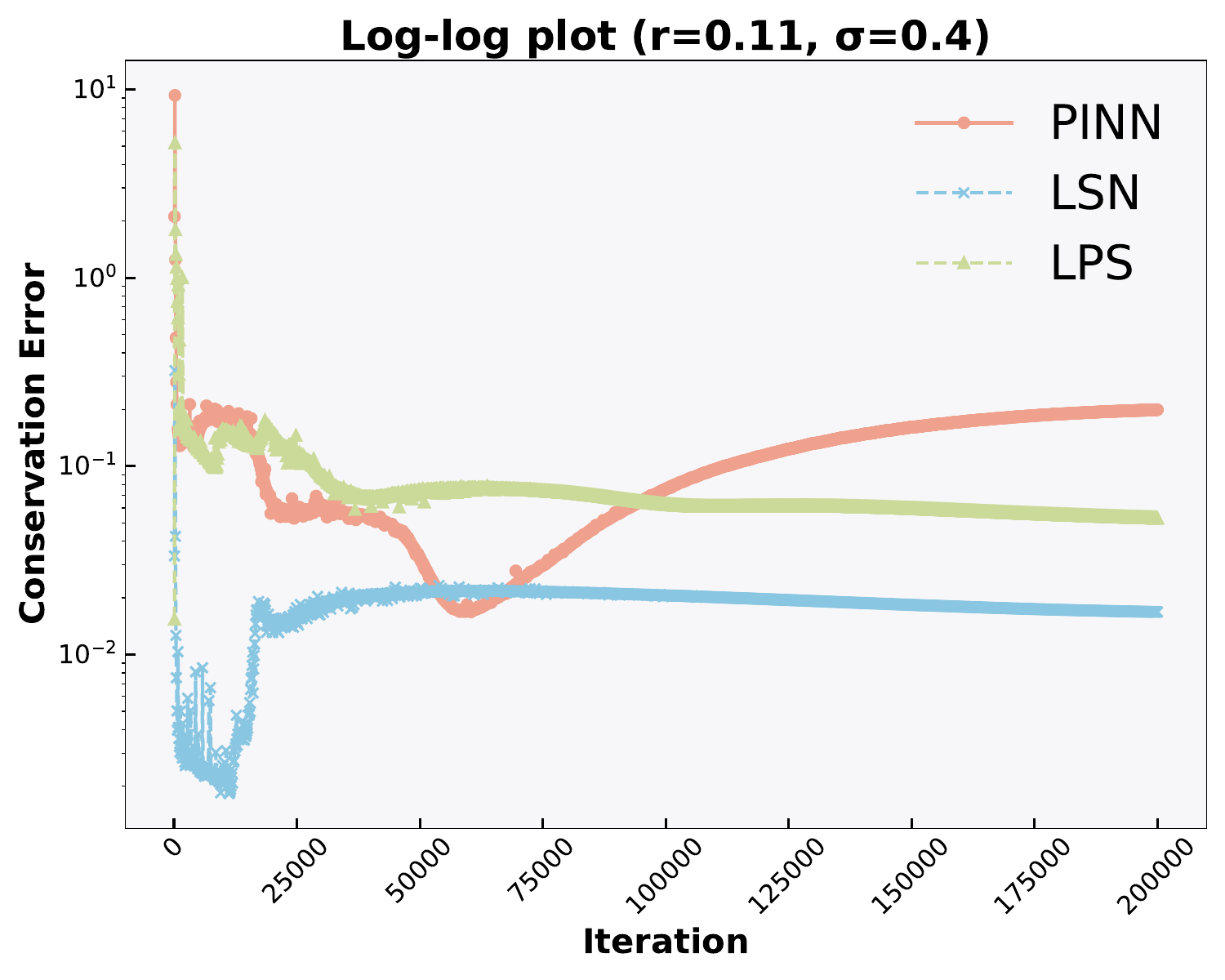}}
\caption{Log-log relative test error curves of PINNs, LSN and LPS under the third parameters configuration.}
\label{7}
\vspace{-0.6cm}
\end{figure*}

We conduct comparative experiments among LSN, LPS, and PINNs using the same weights, as shown in \cref{7}. Specifically, LSN and LPS share the same weights $\lambda_i$ for $i=1,\ldots, 4$, while PINNs share the same weights $\lambda_i$ for $i=1,\ldots, 3$ as LSN and LPS but with $\lambda_4=0$.
The experiments demonstrate that LSN outperforms both PINNs and LPS. Additionally, LPS exhibits overall superior performance compared to PINNs when early stopping is employed. 


\begin{figure*}[t]
\centering
\subfigbottomskip=0.pt
\subfigure{\label{subfig:1}
\includegraphics[width=0.23\textwidth]{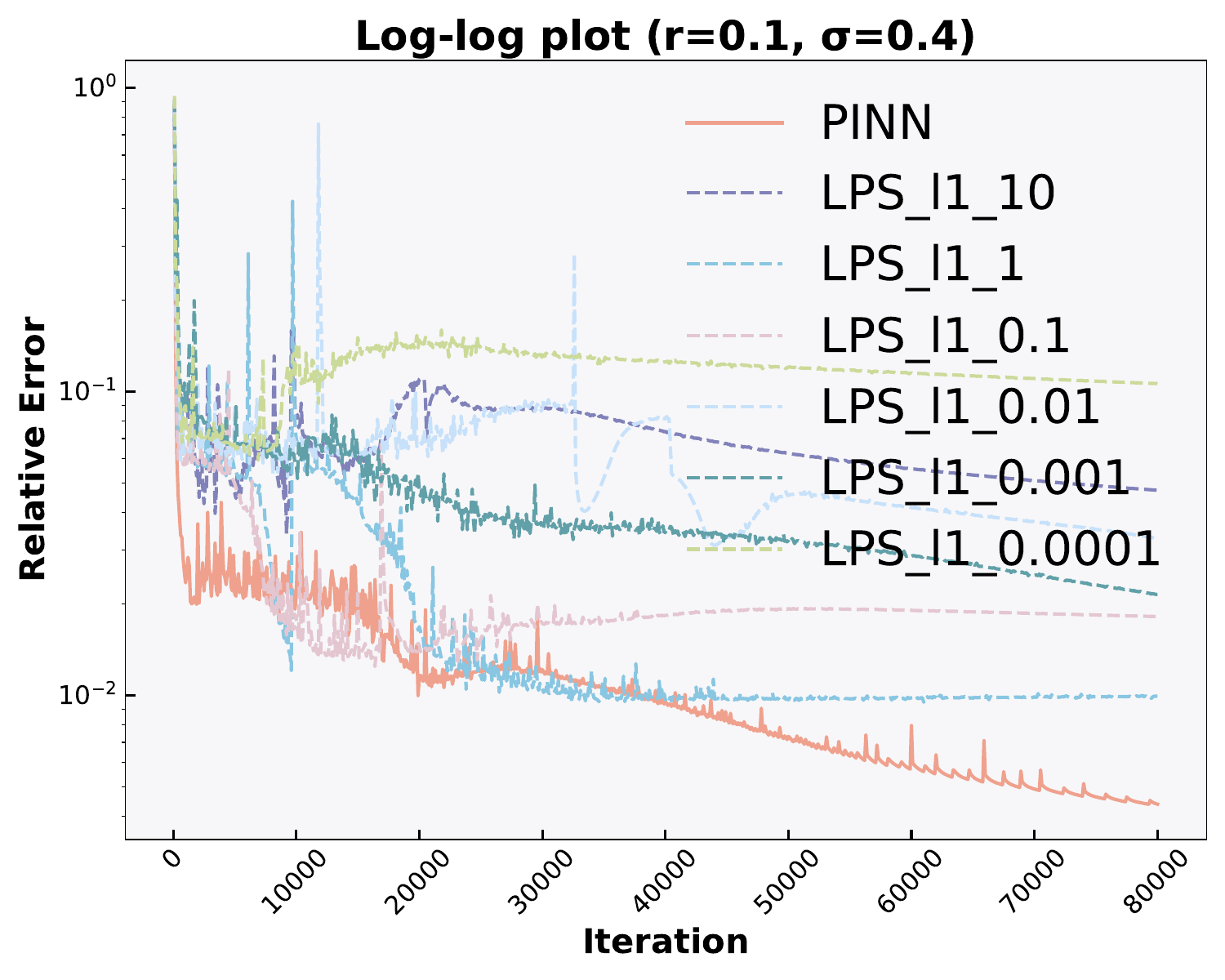}}
\subfigure{\label{subfig:2}
\includegraphics[width=0.23\textwidth]{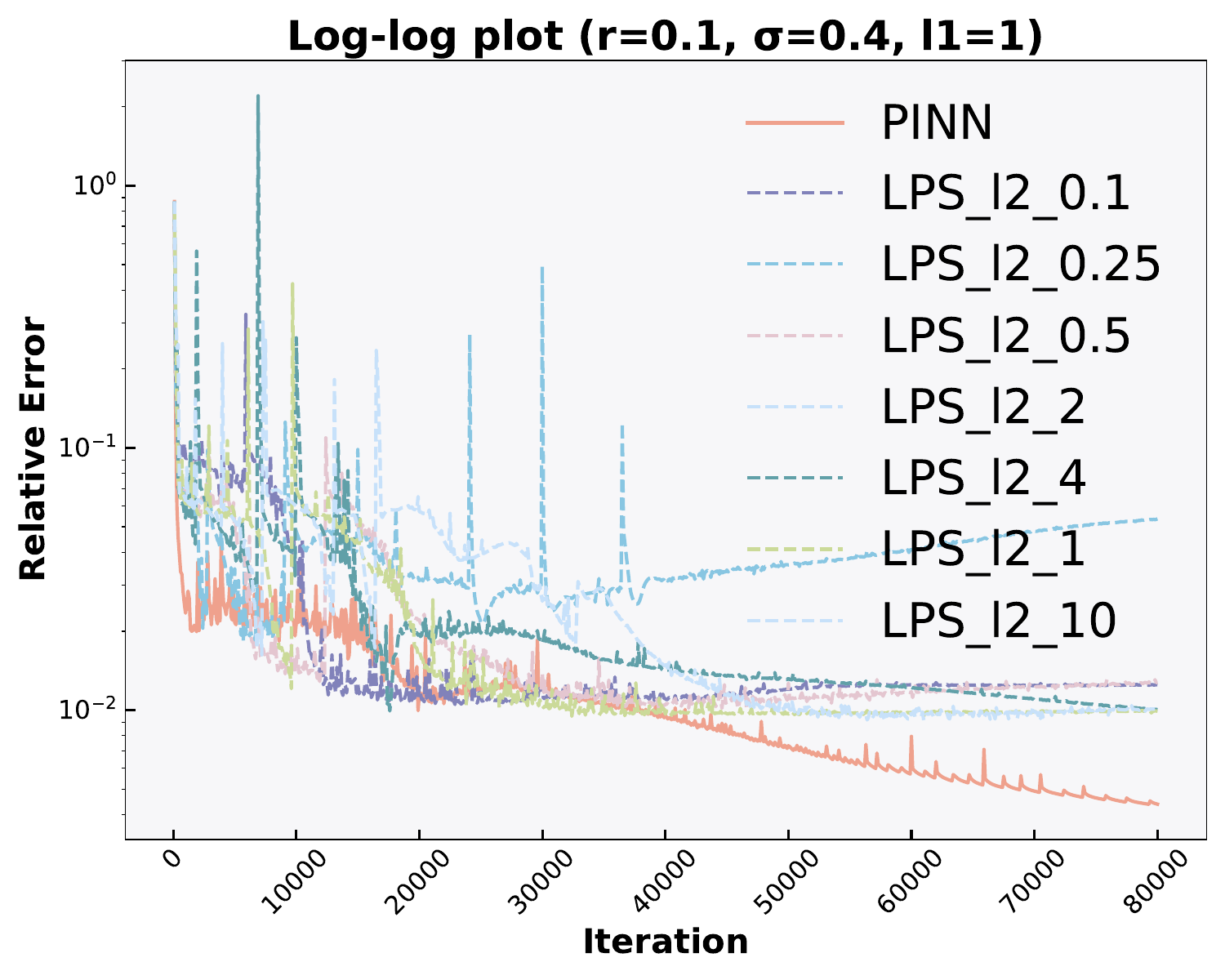}}
\subfigure{\label{subfig:1}
\includegraphics[width=0.23\textwidth]{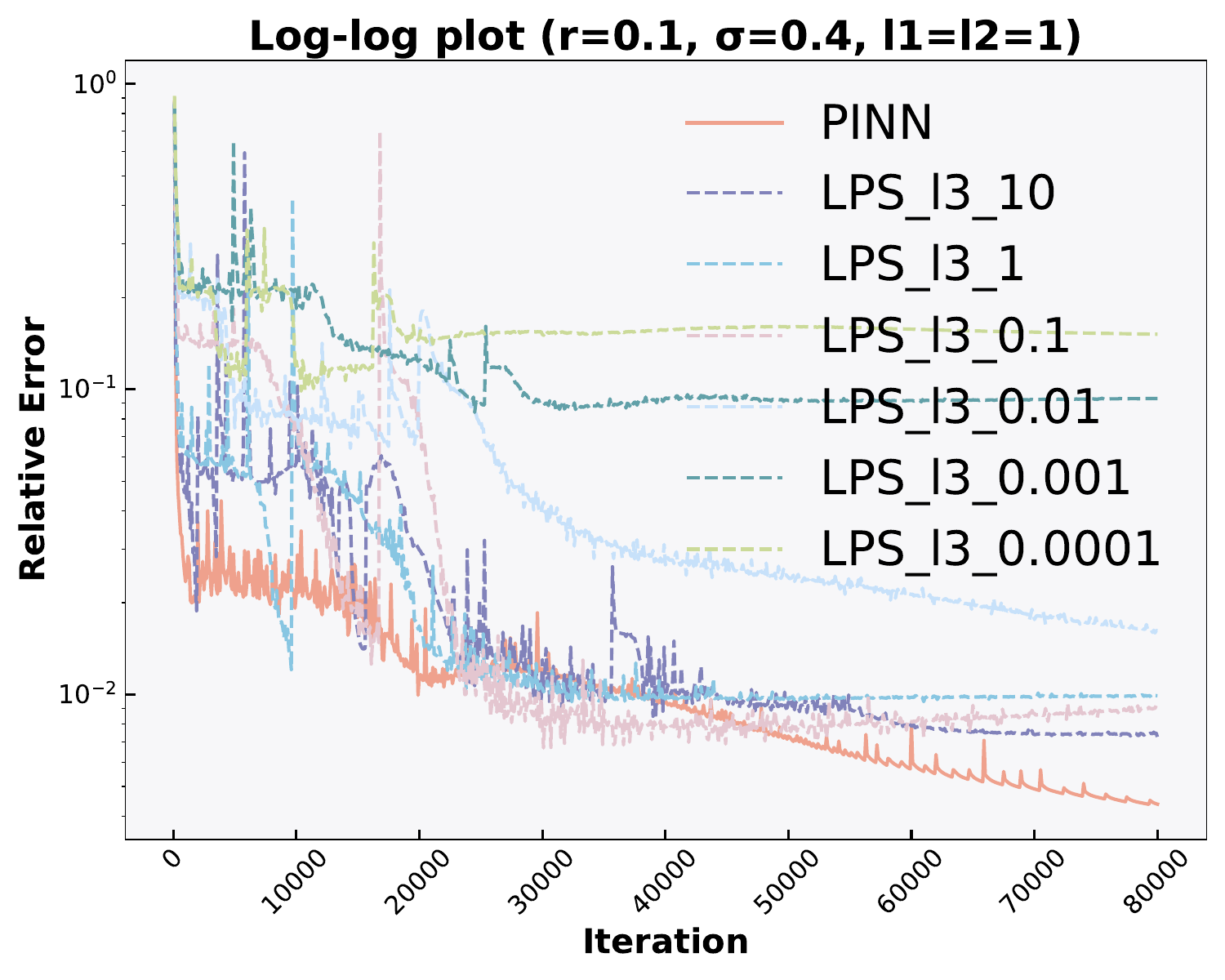}}
\subfigure{\label{subfig:1}
\includegraphics[width=0.23\textwidth]{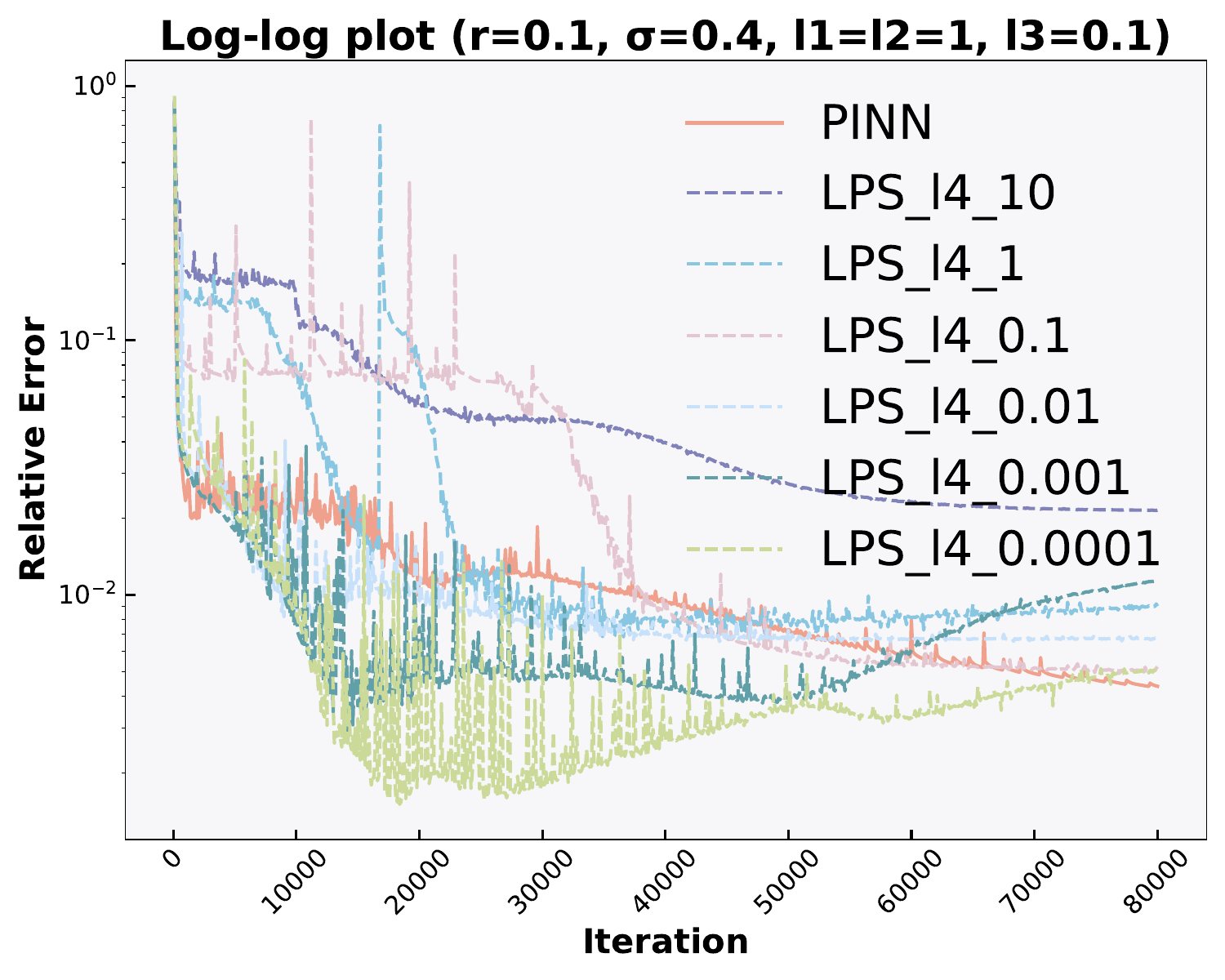}}
\caption{Log-log relative test error curves of PINNs and LPS.}
\label{8}
\end{figure*}
\begin{figure*}[h]
\vspace{-0.4cm}
\centering
\subfigbottomskip=0.pt
\subfigure{\label{subfig:1}
\includegraphics[width=0.25\textwidth]{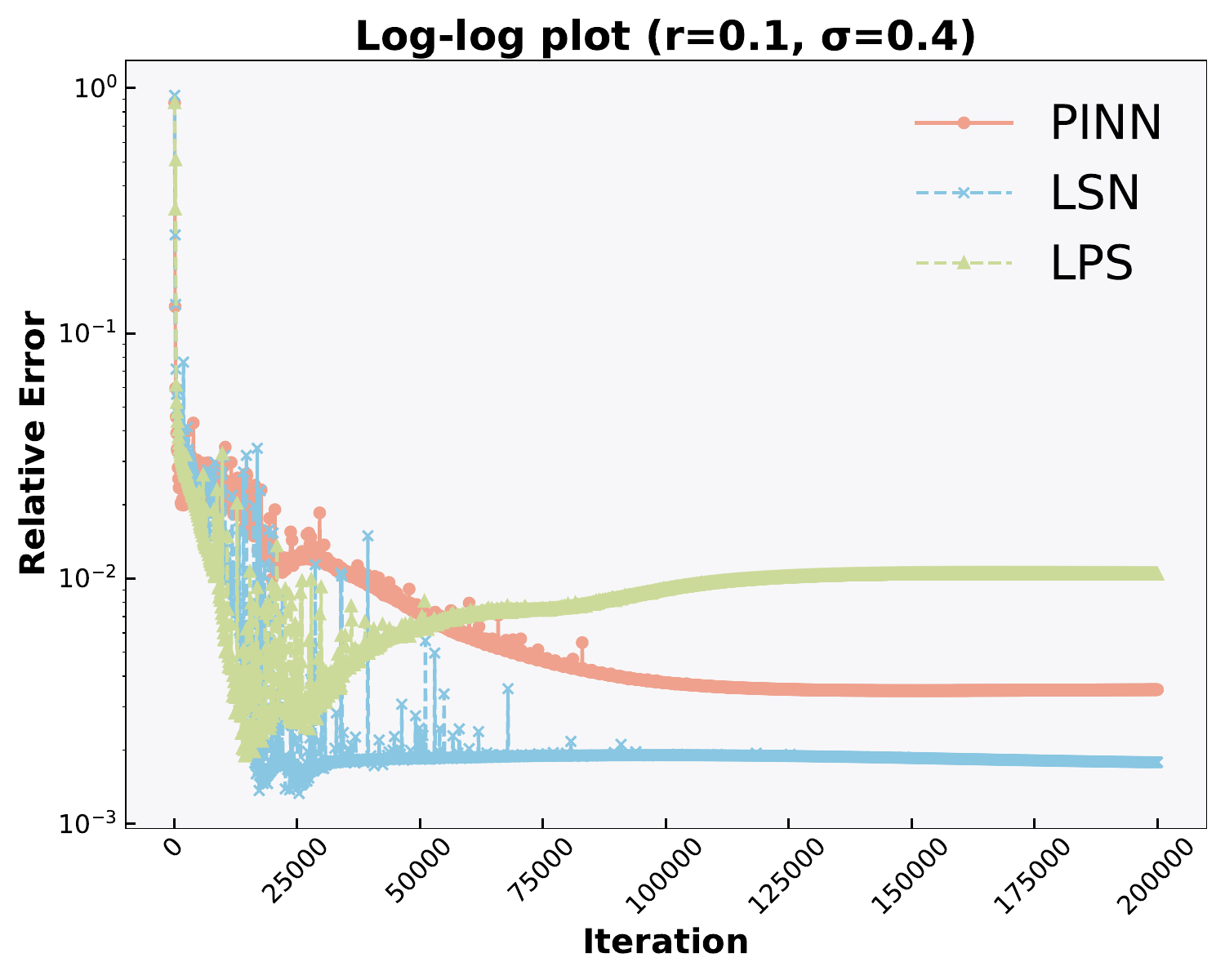}}
\subfigure{\label{subfig:2}
\includegraphics[width=0.25\textwidth]{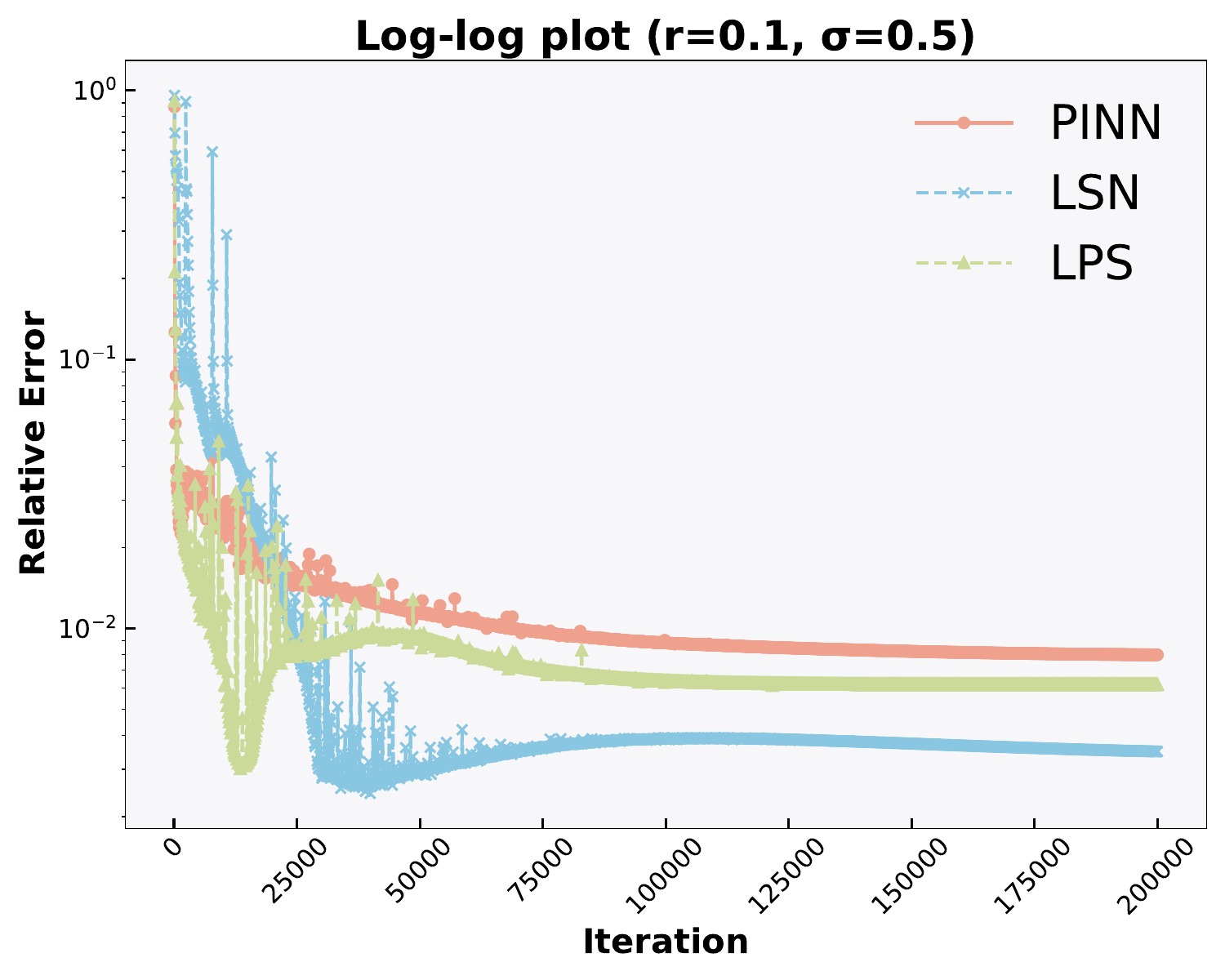}}
\subfigure{\label{subfig:1}
\includegraphics[width=0.25\textwidth]{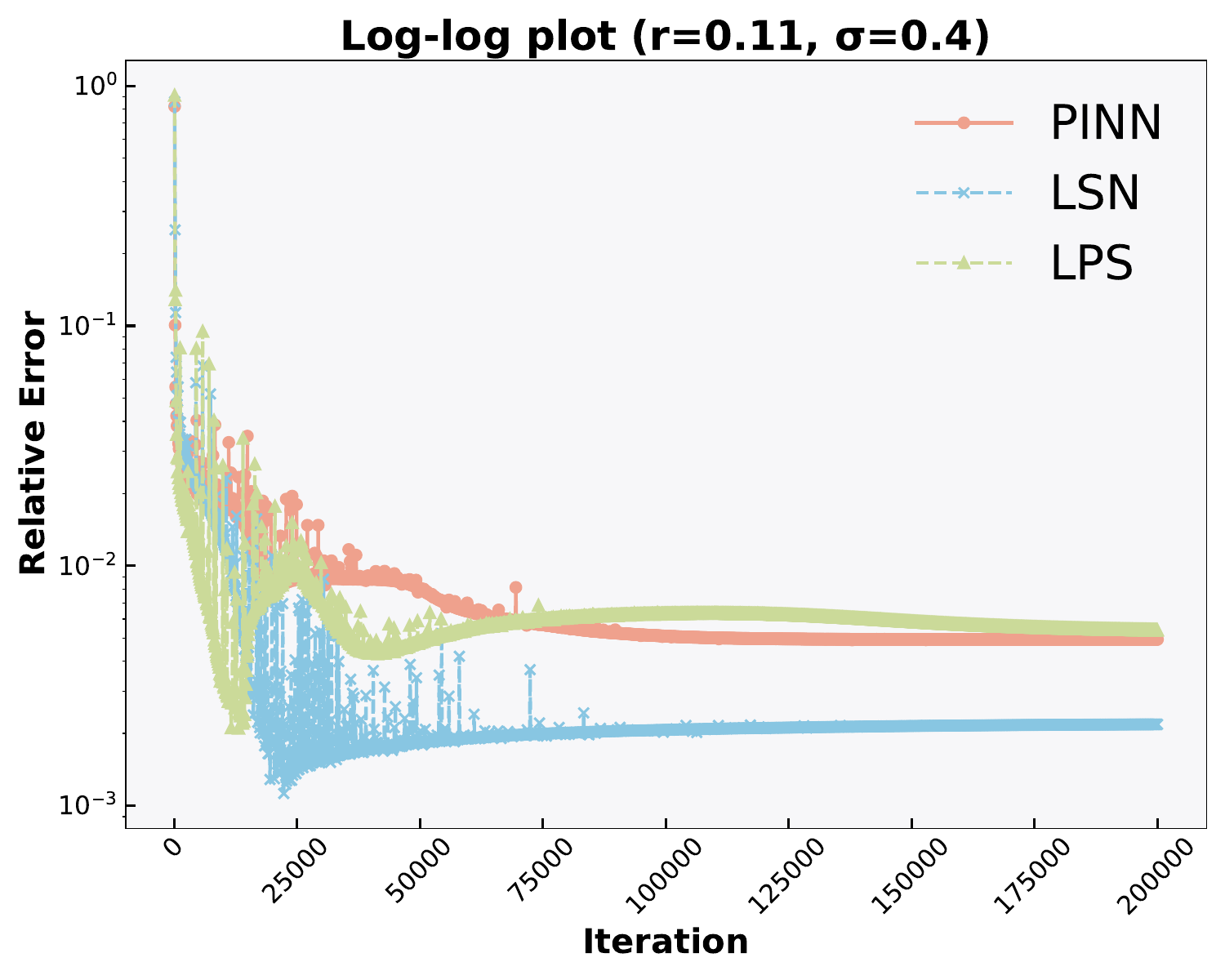}}
\subfigure{\label{subfig:1}
\includegraphics[width=0.25\textwidth]{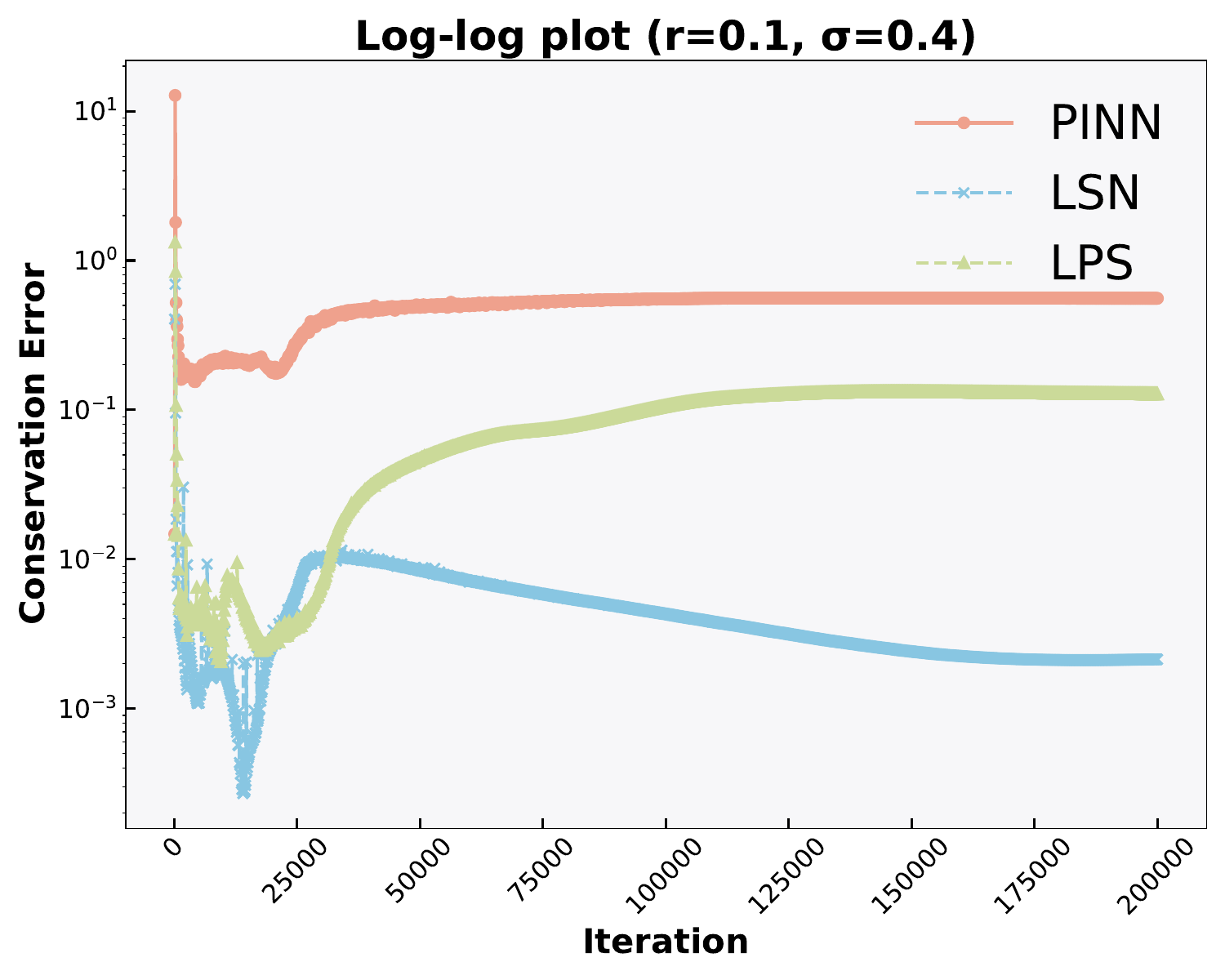}}
\subfigure{\label{subfig:2}
\includegraphics[width=0.25\textwidth]{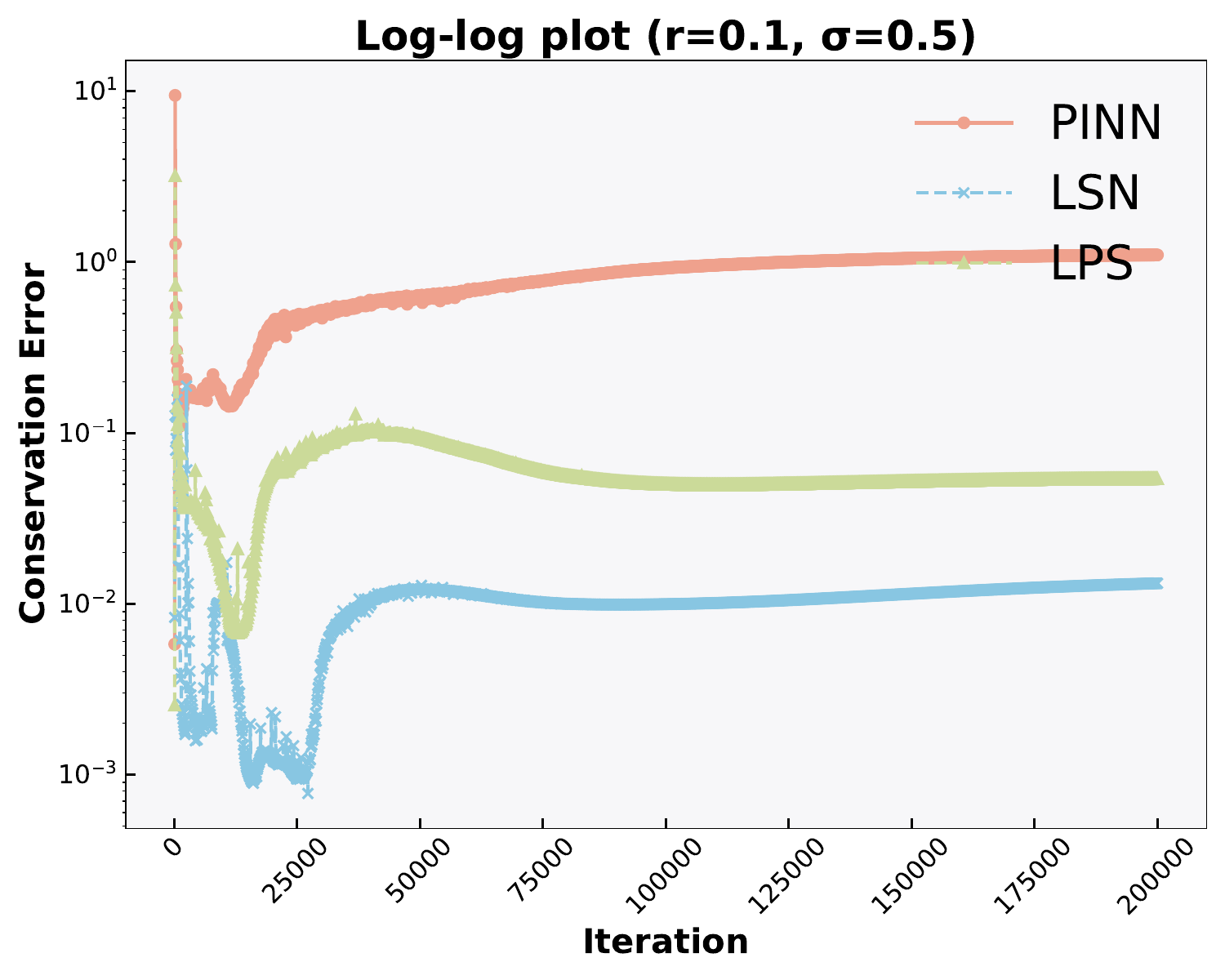}}
\subfigure{\label{subfig:1}
\includegraphics[width=0.25\textwidth]{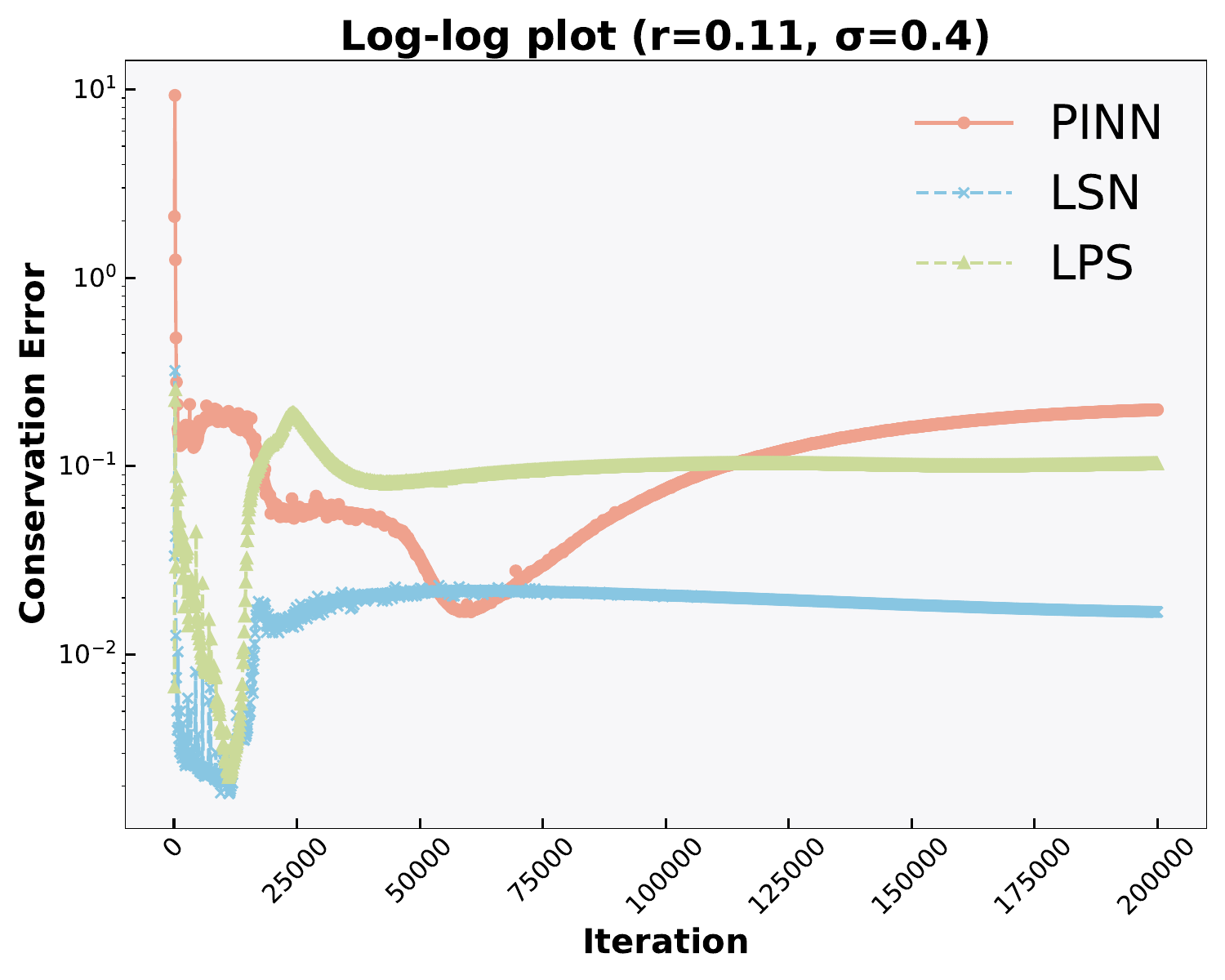}}
\caption{Log-log Error Curves Over Training Steps: PINNs vs. LSN vs. LPS. After individually tuning the weights for LPS, the performance of LSN is evaluated on the testing collocation data set using two metrics.}
\label{9}
\end{figure*}
\begin{figure*}[h]
\centering
\subfigbottomskip=0.pt
\subfigure{\label{subfig:1}
\includegraphics[width=0.4\textwidth]{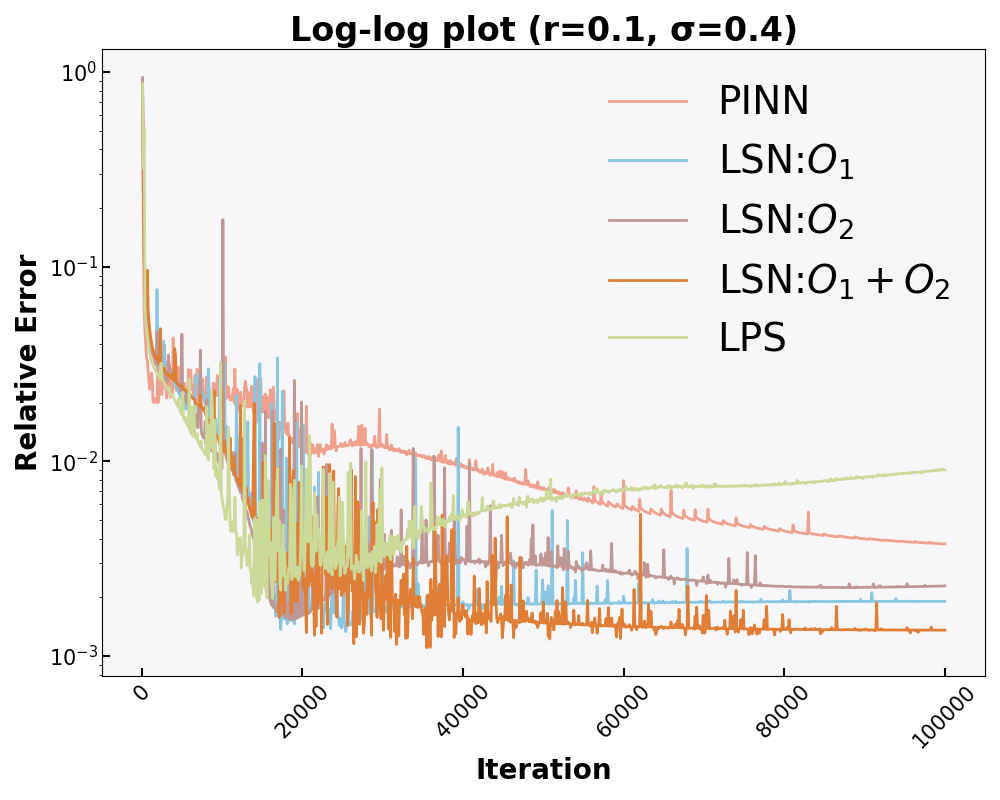}}
\caption{Log-log Error Curves Over Training Steps: PINNs vs. LSN vs. LPS. 
For \(O_1\), the functions are defined as \(l(t) = t\) and \(g(t) = t^2\) (see Eq. (7)). For \(O_2\), \(l(t) = \sin(t)\) and \(g(t) = \cos(t)\). The resulting operator \(O_1 + O_2\) represents the combined effect through linear superposition.}
\label{yanzheng}
\vspace{-0.4cm}
\end{figure*}
\begin{figure*}[h]
\centering
\subfigbottomskip=0.pt
\subfigure[Va\v{s}i\v{c}ek Model]{\label{subfig:1}
\includegraphics[width=0.35\textwidth]{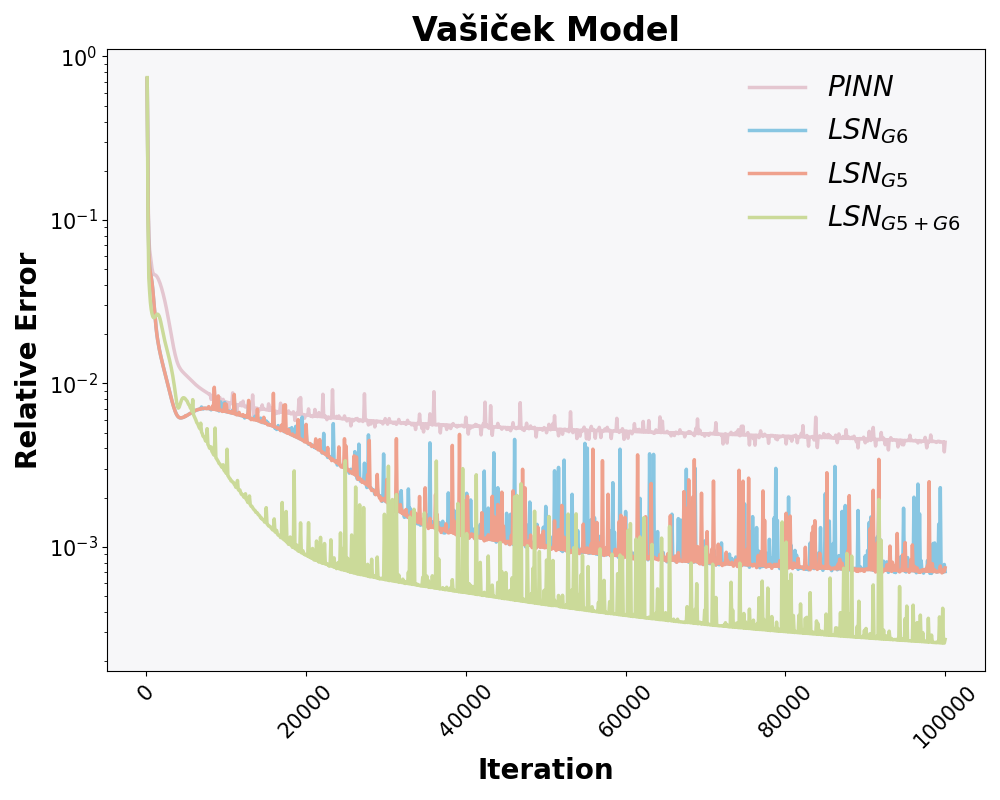}}
\subfigure[Va\v{s}i\v{c}ek Model]{\label{subfig:2}
\includegraphics[width=0.35\textwidth]{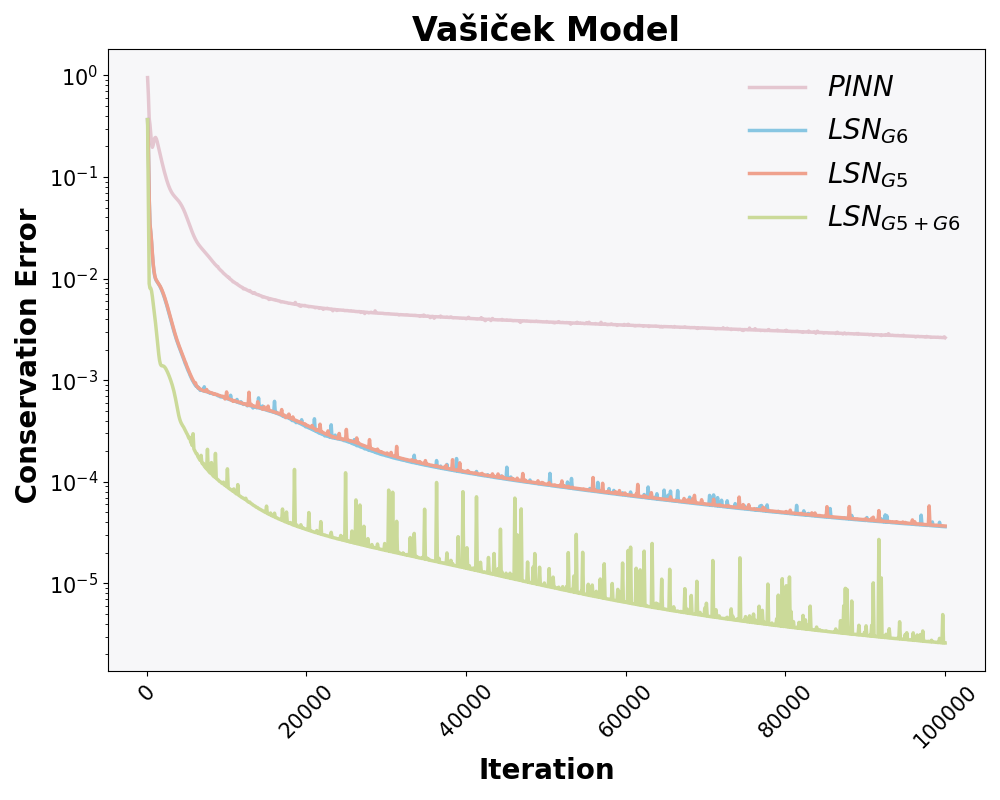}}
\subfigure[KdV Model]{\label{lsubfig:3}
\includegraphics[width=0.35\textwidth]{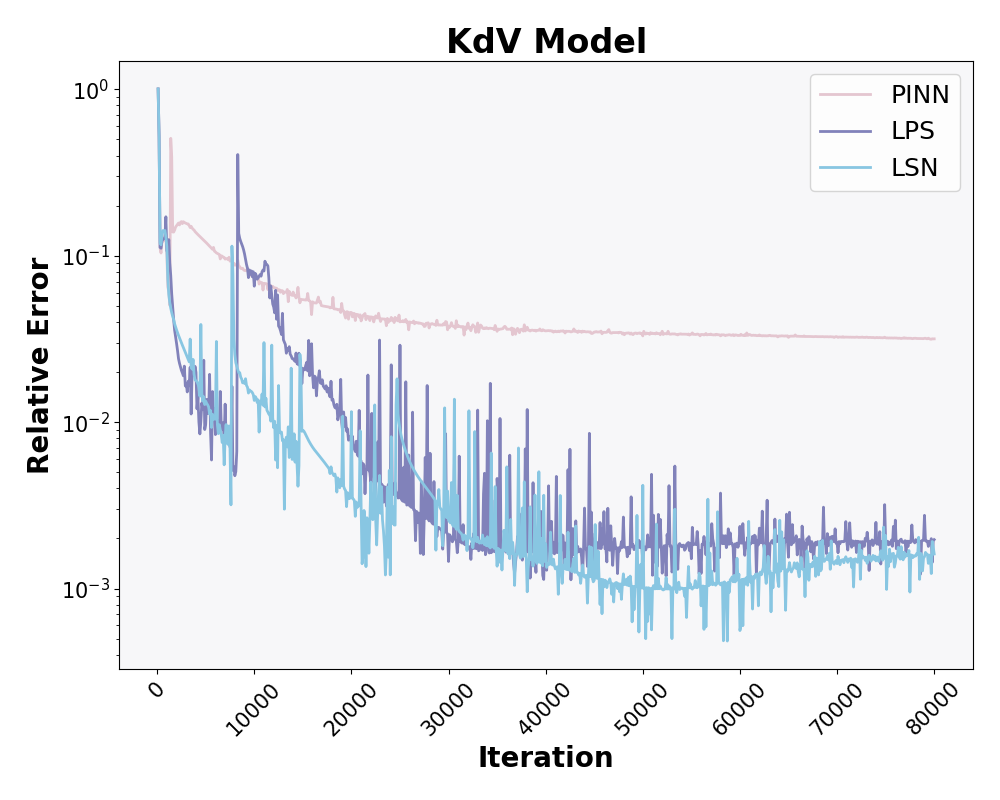}}
\subfigure[Maxwellian Tails Model]{\label{lsubfig:4}
\includegraphics[width=0.35\textwidth]{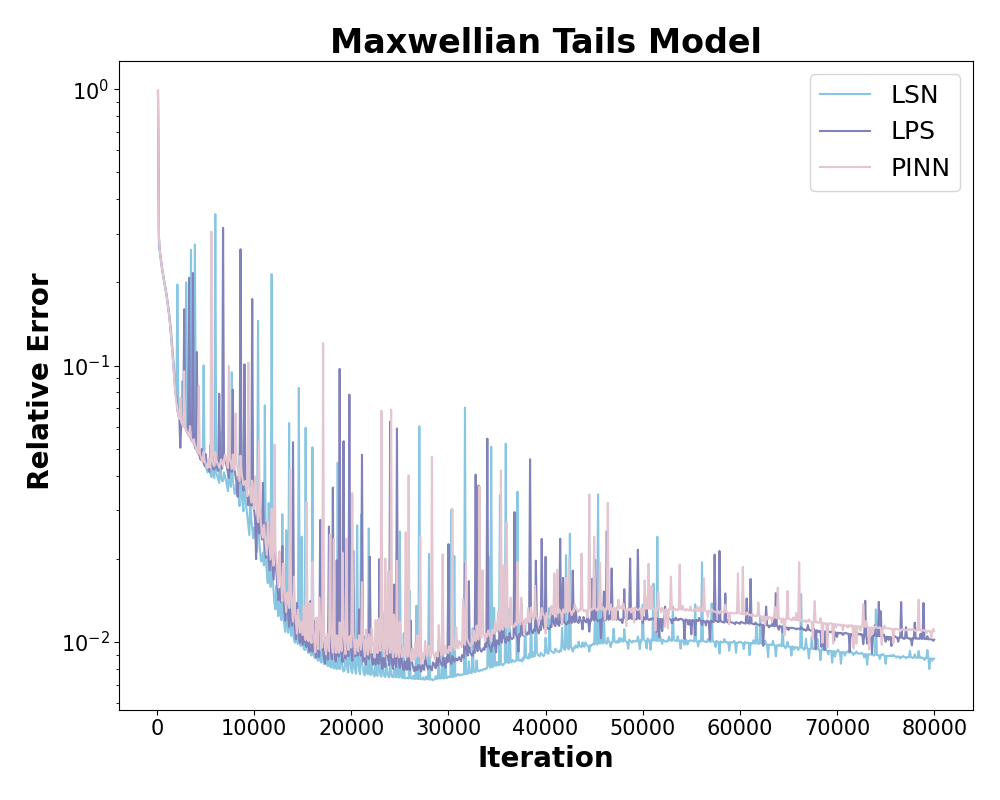}}
\caption{Log-log Error Curves Over Training Steps: PINNs vs. LSN vs. LPS. 
The first row denotes: For the Va\v{s}i\v{c}ek equation, $G_6$ ($G_5$) denotes the use of a single symmetry operator corresponding to the conservation law, while $G_5 + G_6$ represents the linear combination of two operators. 
The second row denotes: The algorithms are further extended to the KdV model and the Maxwellian model.}
\label{vasicek}
\vspace{-0.4cm}
\end{figure*}
\begin{figure*}[h]
\centering
\subfigbottomskip=0.pt
\subfigure{\label{subfig:3}
\includegraphics[width=0.45\textwidth]{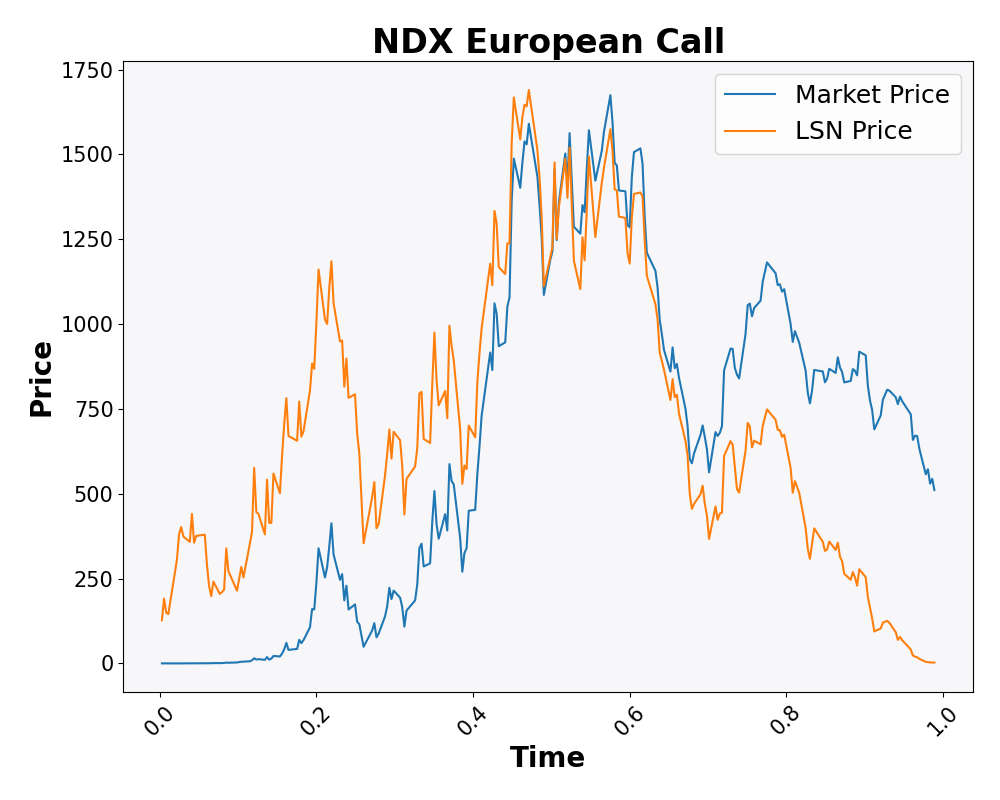}}
\caption{Predicting European call options on Nasdaq 100 index using real financial data with LSN.}
\label{real}
\vspace{-0.4cm}
\end{figure*}

For a more fine-grained comparison between LPS and LSN, we further finetune the weights $l_i$ ($i=1,\ldots, 4$) of the loss function of LPS under different configurations. Notably, $l_i$ ($i=1,\ldots, 3$) in LPS serve the same purpose as $\lambda_i$ ($i=1,\ldots, 3$) in LSN, while $l_4$ in LPS determines the weight of the symmetry residuals, and $\lambda_4$ in LSN determines the weight of the residuals of the conservation laws corresponding to the Lie symmetry operators.
To illustrate the specific process of weight tuning for LPS, consider the example with $r=0.1$ and $\sigma=0.4$, as shown in \cref{8}. We start by fixing all weights to 1 and then traverse $l_1$ values from $[10, 1, 0.1, 0.01, 0.001, 0.0001]$ in descending order, selecting the best value of $l_1=1$. Similarly, we traverse $l_2$ values and find that $l_2$ performs well in the range of 0.1 to 10. We then further subdivide this range into $[10, 4, 2, 1, 0.5, 0.25, 0.1]$ for experimentation and select the best value for $l_2$, which is fixed thereafter. This process is repeated for finetuning other parameters of LPS.


After finetuning the weights of the loss function of LPS, and using the previously set weights for LSN and PINNs, we validate the performance of LSN, PINNs, and the finetuned LPS model. 
As shown in \cref{9}, after extensive weight tuning, LPS outperforms PINNs with early stopping in relative test error but remains surpassed by LSN in terms of relative test error and conservation law error.

This observation can be explained by the fact that LSN potentially retains a broader spectrum of symmetries than LPS by leveraging conservation law systems. These systems incorporate additional symmetries, including potential symmetries \citep{bluman1993use, pucci1993potential}, nonlocal symmetries \citep{bluman1988new,olver1993applications}, and novel local symmetries \citep{edelstein2009conservation}. By utilizing the system of conservation laws, the symmetry analysis of the original equation is extended, thus revealing previously unexplored structures and solutions.


We validate this claim with the following experiments. For the equation parameters \( r = 0.1 \) and \( \sigma = 0.4 \) as shown in \cref{9}, we conducted experiments with 100k training steps. The Lie symmetry operator \( G_2 \) is unique, while the derived conservation laws are not, as they involve freely chosen functions \( l(t) \) and \( g(t) \) (see \cref{ccl2}). We defined two conservation law operators: LSN: \( O_1 \) with \( l(t) = t \) and \( g(t) = t^2 \), and LSN: \( O_2 \) with \( l(t) = \sin(t) \) and \( g(t) = \cos(t) \). We also combined these operators as LSN: \( O_1 + O_2 \). As shown in \cref{yanzheng},  upon convergence, both LSN: \( O_1 \) and LSN: \( O_2 \) demonstrate superior performance compared to PINN and LPS, with the combined operator LSN:  \( O_1 + O_2 \) achieving the highest accuracy. These results indicate that the flexibility in choosing  \( l(t) \) and \( g(t) \) allows the conservation laws to integrate additional information, thereby enhancing solution accuracy.

\subsection{Experiments on Other Differential Equations}\label{ex2}

This section illustrates the methodological flexibility of LSN when applied to the Va\v{s}i\v{c}ek model, the Maxwellian tails model, and the KdV model. In particular, experimental results on the Va\v{s}i\v{c}ek model with different operator combinations indicate that stacking multiple operators can potentially improve accuracy.

\subsubsection{Va\v{s}i\v{c}ek Equation}
This section extends the LSN algorithm to the Va\v{s}i\v{c}ek equation \citep{Privault_2022}, where experiments are performed using both individual operators and stacked combinations. A detailed description of the Va\v{s}i\v{c}ek model is provided in Appendix \ref{A2}.

\textbf{Experimental Design.} In this experiment, the parameters of the Va\v{s}i\v{c}ek model are set as follows: $ \alpha = 0.03 $, $ \beta = 0.08 $, $ \gamma = -1 $, $ \sigma = 0.03 $, $ \Omega = 1 $, and $ T = 1 $. The dataset consists of 500 interior points and 200 boundary points. The network is designed with two layers, each containing 10 neurons. The training is performed for 100,000 iterations with a learning rate of $ lr = 0.001 $ and a learning rate decay factor of $ \Gamma = 0.95 $.

As shown in Figure \ref{subfig:1} and \ref{subfig:2}, extending LSN to the Va\v{s}i\v{c}ek equation further illustrates its ability to handle diverse operator structures. It is observed that while the use of a single operator already achieves significant improvements over PINNs, the performance gains achieved through combined operators are even more substantial. This demonstrates the flexibility of our method: it can effectively utilize both single operators and combinations of multiple operators.





\subsubsection{KdV  Model} This section applies the LSN algorithm to the KdV equation \citep{ibragimov2007new}, with experiments conducted using a single operator. The KdV equation, a nonlinear partial differential equation commonly used in fluid mechanics and related fields, is expressed as $u_t = u_{xxx} + uu_x.$

\textbf{Experimental Design.} The KdV model parameters are configured as follows: the spatial domain is 
\(\Omega = [0,1]\), and the temporal domain is \(T = 1\). The dataset includes 100 interior points and 400 boundary points. The neural network architecture consists of 4 layers, each containing 50 neurons. Training is conducted for 200,000 iterations with a learning rate of \(lr = 0.001\) and a learning rate decay factor of \(\Gamma = 0.95\).

In the experiment using a single operator (see Figure~\ref{lsubfig:3}), LSN achieves a relative test error on the order of \(10^{-4}\). This result is significantly better than the  PINNs method (accuracy on the order of \(10^{-2}\)) and the LPS method (accuracy on the order of \(10^{-3}\)), demonstrating LSN's superior accuracy and stability in solving the KdV equation.



\subsubsection{Maxwellian Tails Model} 
This section applies the LSN algorithm to the Maxwellian tails model. The Maxwellian tails model is used to describe the behavior of particles in a system where the distribution of velocities follows a Maxwellian distribution, with particular focus on the high-energy tails of this distribution, which is expressed as: $ u_{xt} + u_x +u^2=0.$

\textbf{Experimental Design.} In this experiment, the parameters of the Maxwellian tails model are set as follows: the spatial domain is \(\Omega = [1,2]\), and the temporal domain is \(T = 1\). The dataset includes 100 interior points and 100 boundary points. The neural network architecture is designed with 4 layers, each containing 50 neurons. Training is performed for 200,000 iterations with a learning rate of \(lr = 0.001\) and a learning rate decay factor of \(\Gamma = 0.95\).

In the experiment (as shown in Figure~\ref{lsubfig:4}), the decreasing relative error curve over training steps shows that LSN consistently outperforms LPS and PINNs. This result demonstrates the superior performance of LSN in the Maxwellian tails model.



\subsection{Experiments on Real Market Data}\label{ex3}
 In this section, we extend the experiments to real-time market data. The focus is on European call options based on the Nasdaq 100 index. The neural network used for option pricing relies on two fixed inputs: volatility ($\sigma$) and risk-free interest rate ($r$). In the experimental setup, the implied volatility from the OptionMetrics dataset \citep{wachowicz2020wharton} is used. For the risk-free interest rate, the average yield of one-year Treasury bonds during the option period is taken as the average yield of one-year Treasury bonds during the option period, obtained by scraping data from Yahoo Finance. We train a LSN model for this type of option and defines the required range of spot prices. During inference, these independent models are used to predict market prices.

As shown in Figure \ref{real}, LSN performs well \citep{dhiman2023physics}, but some discrepancies remain compared to market prices, potentially attributable to market supply and demand dynamics or other external influences. However, overall, the LSN method performs well, and a high correlation is observed between market prices and predicted prices.

\section{Conclusion}\label{sec6}
This paper proposes Lie Symmetry Net (LSN) to solve differential equations for modeling financial market dynamics by exploiting the intrinsic symmetry of collocation data. The Lie symmetries of these equations is interpreted as several conservation laws. We define a Lie symmetries residual to measure how well these conservation laws are realised at specific points in the collocation data space, which is then integrated over the entire  collocation data space to form a Lie symmetry risk.
LSN is optimized under structural risk minimization framework to balance the Lie symmetry risk and the original collocation data fitting residuals.
Extensive experiments demonstrate the effectiveness and scalability of LSN, showing that the test error is reduced by more than an order of magnitude.

\subsubsection*{Broader Impact Statement and Future Work}
This paper aims to develop AI-driven, symmetry-aware differential equation simulators to model financial market dynamics, which may also contribute to scientific discovery and engineering.
This paper also pioneers the realization of Lie symmetries by maintaining the corresponding conservation laws, presenting a universal, off-the-shelf solution that is not limited to  PINNs or the Black-Scholes equation, but can be extended to a wide range of backbones and differential equations. For future work, we will consider the incorporation of symmetries into network architecture.

\bibliography{main}

\begin{thebibliography}{69}
\providecommand{\natexlab}[1]{#1}
\providecommand{\url}[1]{\texttt{#1}}
\expandafter\ifx\csname urlstyle\endcsname\relax
  \providecommand{\doi}[1]{doi: #1}\else
  \providecommand{\doi}{doi: \begingroup \urlstyle{rm}\Url}\fi

\bibitem[Akhound-Sadegh et~al.(2024)Akhound-Sadegh, Perreault-Levasseur,
  Brandstetter, Welling, and Ravanbakhsh]{akhound2024lie}
Tara Akhound-Sadegh, Laurence Perreault-Levasseur, Johannes Brandstetter, Max
  Welling, and Siamak Ravanbakhsh.
\newblock Lie point symmetry and physics-informed networks.
\newblock \emph{Advances in Neural Information Processing Systems}, 36, 2024.

\bibitem[Ankudinova \& Ehrhardt(2008)Ankudinova and
  Ehrhardt]{ankudinova2008numerical}
Julia Ankudinova and Matthias Ehrhardt.
\newblock On the numerical solution of nonlinear {B}lack--{S}choles equations.
\newblock \emph{Computers and Mathematics with Applications}, 56\penalty0
  (3):\penalty0 799--812, 2008.

\bibitem[Badrinarayanan et~al.(2015)Badrinarayanan, Mishra, and
  Cipolla]{badrinarayanan2015symmetry}
Vijay Badrinarayanan, Bamdev Mishra, and Roberto Cipolla.
\newblock Symmetry-invariant optimization in deep networks.
\newblock \emph{arXiv preprint arXiv:1511.01754}, 2015.

\bibitem[Bai et~al.(2022)Bai, Chaolu, and Bilige]{bai2022application}
Yuexing Bai, Temuer Chaolu, and Sudao Bilige.
\newblock The application of improved physics-informed neural network (ipinn)
  method in finance.
\newblock \emph{Nonlinear Dynamics}, 107\penalty0 (4):\penalty0 3655--3667,
  2022.

\bibitem[Black \& Scholes(1973)Black and Scholes]{Black_Scholes_1973}
Fischer Black and Myron Scholes.
\newblock The pricing of options and corporate liabilities.
\newblock \emph{Journal of Political Economy}, pp.\  637–654, May 1973.

\bibitem[Bluman(1993)]{bluman1993use}
George Bluman.
\newblock Use and construction of potential symmetries.
\newblock \emph{Mathematical and computer modelling}, 18\penalty0
  (10):\penalty0 1--14, 1993.

\bibitem[Bluman et~al.(1988)Bluman, Reid, and Kumei]{bluman1988new}
George~W Bluman, Gregory~J Reid, and Sukeyuki Kumei.
\newblock New classes of symmetries for partial differential equations.
\newblock \emph{Journal of Mathematical Physics}, 29\penalty0 (4):\penalty0
  806--811, 1988.

\bibitem[Bogatskiy et~al.(2024)Bogatskiy, Hoffman, Miller, Offermann, and
  Liu]{bogatskiy2024explainable}
Alexander Bogatskiy, Timothy Hoffman, David~W Miller, Jan~T Offermann, and
  Xiaoyang Liu.
\newblock Explainable equivariant neural networks for particle physics:
  Pelican.
\newblock \emph{Journal of High Energy Physics}, 2024\penalty0 (3):\penalty0
  1--66, 2024.

\bibitem[Celledoni et~al.(2021)Celledoni, Ehrhardt, Etmann, Owren,
  Sch{\"o}nlieb, and Sherry]{celledoni2021equivariant}
Elena Celledoni, Matthias~J Ehrhardt, Christian Etmann, Brynjulf Owren,
  Carola-Bibiane Sch{\"o}nlieb, and Ferdia Sherry.
\newblock Equivariant neural networks for inverse problems.
\newblock \emph{Inverse Problems}, 37\penalty0 (8):\penalty0 085006, 2021.

\bibitem[Cervera(2019)]{cervera2019solution}
JA~Gonz{\'a}lez Cervera.
\newblock Solution of the {Black-Scholes} equation using artificial neural
  networks.
\newblock In \emph{Journal of Physics: Conference Series}, volume 1221, pp.\
  012044. IOP Publishing, 2019.

\bibitem[Cohen et~al.(2021)]{cohen2021equivariant}
Taco Cohen et~al.
\newblock \emph{Equivariant convolutional networks}.
\newblock PhD thesis, Taco Cohen, 2021.

\bibitem[De~Ryck \& Mishra(2022)De~Ryck and Mishra]{de2022error}
Tim De~Ryck and Siddhartha Mishra.
\newblock Error analysis for physics-informed neural networks (pinns)
  approximating {Kolmogorov PDEs}.
\newblock \emph{Advances in Computational Mathematics}, 48\penalty0
  (6):\penalty0 79, 2022.

\bibitem[Del~Moral \& Del~Moral(2004)Del~Moral and Del~Moral]{del2004feynman}
Pierre Del~Moral and Pierre Del~Moral.
\newblock \emph{Feynman-kac formulae}.
\newblock Springer, 2004.

\bibitem[Dhiman \& Hu(2023)Dhiman and Hu]{dhiman2023physics}
Ashish Dhiman and Yibei Hu.
\newblock Physics informed neural network for option pricing.
\newblock \emph{arXiv preprint arXiv:2312.06711,}, 2023.

\bibitem[Du et~al.(2018)Du, Hu, and Lee]{NEURIPS2018_fe131d7f}
Simon~S Du, Wei Hu, and Jason~D Lee.
\newblock Algorithmic regularization in learning deep homogeneous models:
  Layers are automatically balanced.
\newblock In S.~Bengio, H.~Wallach, H.~Larochelle, K.~Grauman, N.~Cesa-Bianchi,
  and R.~Garnett (eds.), \emph{Advances in Neural Information Processing
  Systems}, volume~31, 2018.

\bibitem[Edelstein \& Govinder(2009)Edelstein and
  Govinder]{edelstein2009conservation}
RM~Edelstein and KS~Govinder.
\newblock Conservation laws for the{ Black--Scholes} equation.
\newblock \emph{Nonlinear Analysis: Real World Applications}, 10\penalty0
  (6):\penalty0 3372--3380, 2009.

\bibitem[Gaeta(2017)]{gaeta2017symmetry}
Giuseppe Gaeta.
\newblock Symmetry of stochastic non-variational differential equations.
\newblock \emph{Physics Reports}, 686:\penalty0 1--62, 2017.

\bibitem[Gazizov \& Ibragimov(1998)Gazizov and Ibragimov]{gazizov1998lie}
Rafail~K Gazizov and Nail~H Ibragimov.
\newblock Lie symmetry analysis of differential equations in finance.
\newblock \emph{Nonlinear Dynamics}, 17:\penalty0 387--407, 1998.

\bibitem[Gerken et~al.(2023)Gerken, Aronsson, Carlsson, Linander, Ohlsson,
  Petersson, and Persson]{gerken2023geometric}
Jan~E Gerken, Jimmy Aronsson, Oscar Carlsson, Hampus Linander, Fredrik Ohlsson,
  Christoffer Petersson, and Daniel Persson.
\newblock Geometric deep learning and equivariant neural networks.
\newblock \emph{Artificial Intelligence Review}, 56\penalty0 (12):\penalty0
  14605--14662, 2023.

\bibitem[Goodman \& Stampfli(2001)Goodman and Stampfli]{goodman2001mathematics}
Victor Goodman and Joseph Stampfli.
\newblock The mathematics of finance.
\newblock \emph{American Mathematical Society}, 2001.

\bibitem[Hao et~al.(2023)Hao, Wang, Su, Ying, Dong, Liu, Cheng, Song, and
  Zhu]{hao2023gnot}
Zhongkai Hao, Zhengyi Wang, Hang Su, Chengyang Ying, Yinpeng Dong, Songming
  Liu, Ze~Cheng, Jian Song, and Jun Zhu.
\newblock Gnot: A general neural operator transformer for operator learning.
\newblock In \emph{International Conference on Machine Learning}, pp.\
  12556--12569. PMLR, 2023.

\bibitem[Hassler(2016)]{ito}
Uwe Hassler.
\newblock Ito’s lemma.
\newblock \emph{Stochastic Processes and Calculus: An Elementary Introduction
  with Applications}, pp.\  239--258, 2016.

\bibitem[He et~al.(2021)He, Chen, Dong, Wang, Lin, et~al.]{he2021efficient}
Lingshen He, Yuxuan Chen, Yiming Dong, Yisen Wang, Zhouchen Lin, et~al.
\newblock Efficient equivariant network.
\newblock \emph{Advances in Neural Information Processing Systems},
  34:\penalty0 5290--5302, 2021.

\bibitem[Hecht-Nielsen(1990)]{HECHTNIELSEN1990129}
Robert Hecht-Nielsen.
\newblock On the algebraic structure of feedforward network weight spaces.
\newblock In Rolf ECKMILLER (ed.), \emph{Advanced Neural Computers}, pp.\
  129--135. 1990.

\bibitem[Huang \& Cen(2014)Huang and Cen]{Huang_Cen_2014}
Jian Huang and Zhongdi Cen.
\newblock Cubic spline method for a generalized {Black-Scholes} equation.
\newblock \emph{Mathematical Problems in Engineering}, 2014:\penalty0 1–7,
  Jan 2014.

\bibitem[Ibragimov(1995)]{ibragimov1995crc}
Nail~H Ibragimov.
\newblock Crc handbook of {Lie} group analysis of differential equations.
\newblock 3, 1995.

\bibitem[Ibragimov(2007)]{ibragimov2007new}
Nail~H Ibragimov.
\newblock A new conservation theorem.
\newblock \emph{Journal of Mathematical Analysis and Applications},
  333\penalty0 (1):\penalty0 311--328, 2007.

\bibitem[Janson \& Tysk(2006)Janson and Tysk]{janson2006feynman}
Svante Janson and Johan Tysk.
\newblock Feynman--kac formulas for{ Black--Scholes}--type operators.
\newblock \emph{Bulletin of the London Mathematical Society}, 38\penalty0
  (2):\penalty0 269--282, 2006.

\bibitem[Kadalbajoo et~al.(2012)Kadalbajoo, Tripathi, and
  Kumar]{Kadalbajoo_Tripathi_Kumar_2012}
Mohan~K. Kadalbajoo, Lok~Pati Tripathi, and Alpesh Kumar.
\newblock A cubic b-spline collocation method for a numerical solution of the
  generalized {Black–Scholes} equation.
\newblock \emph{Mathematical and Computer Modelling}, 55\penalty0
  (3–4):\penalty0 1483–1505, Feb 2012.

\bibitem[Kara \& Mahomed(2000)Kara and Mahomed]{Kara_Mahomed_2000}
A.~H. Kara and F.~M. Mahomed.
\newblock Relationship between {Symmetries} and {Conservation Laws}.
\newblock \emph{International Journal of Theoretical Physics}, 39\penalty0
  (1):\penalty0 23–40, Jan 2000.

\bibitem[Kara \& Mahomed(2002)Kara and Mahomed]{kara2002basis}
Abdul~H Kara and Fazal~M Mahomed.
\newblock A basis of conservation laws for partial differential equations.
\newblock \emph{Journal of Nonlinear Mathematical Physics}, 9\penalty0
  (sup2):\penalty0 60--72, 2002.

\bibitem[Khalique \& Motsepa(2018)Khalique and Motsepa]{khalique2018lie}
Chaudry~Masood Khalique and Tanki Motsepa.
\newblock Lie symmetries, group-invariant solutions and conservation laws of
  the {Vasicek} pricing equation of mathematical finance.
\newblock \emph{Physica A: Statistical Mechanics and its Applications},
  505:\penalty0 871--879, 2018.

\bibitem[Klebaner(2012)]{Klebaner_2012}
Fima~C Klebaner.
\newblock Introduction to stochastic calculus with applications.
\newblock \emph{IMPERIAL COLLEGE PRESS eBooks,IMPERIAL COLLEGE PRESS eBooks},
  Mar 2012.

\bibitem[Knapp \& Knapp(1996)Knapp and Knapp]{knapp1996lie}
Anthony~W Knapp and Anthony~William Knapp.
\newblock Lie groups beyond an introduction.
\newblock 140, 1996.

\bibitem[Koc et~al.(2003)Koc, Boztosun, and Boztosun]{koc2003numerical}
MB~Koc, I~Boztosun, and D~Boztosun.
\newblock On the numerical solution of {Black-Schole}s equation.
\newblock In \emph{International Workshop on MeshFree Methods}, pp.\ ~1, 2003.

\bibitem[Kovachki et~al.(2023)Kovachki, Li, Liu, Azizzadenesheli, Bhattacharya,
  Stuart, and Anandkumar]{kovachki2023neural}
Nikola Kovachki, Zongyi Li, Burigede Liu, Kamyar Azizzadenesheli, Kaushik
  Bhattacharya, Andrew Stuart, and Anima Anandkumar.
\newblock Neural operator: Learning maps between function spaces with
  applications to {PDEs}.
\newblock \emph{Journal of Machine Learning Research}, 24\penalty0
  (89):\penalty0 1--97, 2023.

\bibitem[Kozlov(2010)]{kozlov2010symmetries}
Roman Kozlov.
\newblock Symmetries of systems of stochastic differential equations with
  diffusion matrices of full rank.
\newblock \emph{Journal of Physics A: Mathematical and Theoretical},
  43\penalty0 (24):\penalty0 245201, 2010.

\bibitem[Li et~al.(2024)Li, Lu, Li, Wen, Li, Wang, Chen, and
  Ren]{li2024symmetry}
Zhe Li, Zixiang Lu, Ruichen Li, Xuelan Wen, Xiang Li, Liwei Wang, Ji~Chen, and
  Weiluo Ren.
\newblock Symmetry enforced solution of the many-body {S}chrödinger equation
  with deep neural network.
\newblock \emph{arXiv preprint arXiv:2406.01222}, 2024.

\bibitem[Li et~al.(2020)Li, Kovachki, Azizzadenesheli, Liu, Bhattacharya,
  Stuart, and Anandkumar]{li2020fourier}
Zongyi Li, Nikola Kovachki, Kamyar Azizzadenesheli, Burigede Liu, Kaushik
  Bhattacharya, Andrew Stuart, and Anima Anandkumar.
\newblock Fourier neural operator for parametric partial differential
  equations.
\newblock \emph{arXiv preprint arXiv:2010.08895}, 2020.

\bibitem[Liu et~al.(2009)Liu, Li, and Zhang]{liu2009lie}
Hanze Liu, Jibin Li, and Quanxin Zhang.
\newblock Lie symmetry analysis and exact explicit solutions for general
  {Bur}gers’ equation.
\newblock \emph{Journal of Computational and Applied Mathematics}, 228\penalty0
  (1):\penalty0 1--9, 2009.

\bibitem[Lu et~al.(2021)Lu, Jin, Pang, Zhang, and Karniadakis]{lu2021learning}
Lu~Lu, Pengzhan Jin, Guofei Pang, Zhongqiang Zhang, and George~Em Karniadakis.
\newblock Learning nonlinear operators via {DeepONet} based on the universal
  approximation theorem of operators.
\newblock \emph{Nature machine intelligence}, 3\penalty0 (3):\penalty0
  218--229, 2021.

\bibitem[Marjanovic \& Solo(2018)Marjanovic and Solo]{marjanovic2018numerical}
Goran Marjanovic and Victor Solo.
\newblock Numerical methods for stochastic differential equations in matrix lie
  groups made simple.
\newblock \emph{IEEE Transactions on Automatic Control}, 63\penalty0
  (12):\penalty0 4035--4050, 2018.

\bibitem[Marjanovic et~al.(2015)Marjanovic, Piggott, and
  Solo]{marjanovic2015simple}
Goran Marjanovic, Marc~J Piggott, and Victor Solo.
\newblock A simple approach to numerical methods for stochastic differential
  equations in {Lie} groups.
\newblock In \emph{2015 54th IEEE Conference on Decision and Control (CDC)},
  pp.\  7143--7150. IEEE, 2015.

\bibitem[Merton(1973)]{merton1973theory}
Robert~C Merton.
\newblock Theory of rational option pricing.
\newblock \emph{The Bell Journal of economics and management science}, pp.\
  141--183, 1973.

\bibitem[Misawa(2001)]{misawa2001lie}
Tetsuya Misawa.
\newblock A{ Lie} algebraic approach to numerical integration of stochastic
  differential equations.
\newblock \emph{SIAM Journal on Scientific Computing}, 23\penalty0
  (3):\penalty0 866--890, 2001.

\bibitem[{\O}ksendal \& {\O}ksendal(2003){\O}ksendal and
  {\O}ksendal]{oksendal2003stochastic}
Bernt {\O}ksendal and Bernt {\O}ksendal.
\newblock \emph{Stochastic differential equations}.
\newblock Springer, 2003.

\bibitem[Oliveri(2010)]{oliveri2010lie}
Francesco Oliveri.
\newblock Lie symmetries of differential equations: classical results and
  recent contributions.
\newblock \emph{Symmetry}, 2\penalty0 (2):\penalty0 658--706, 2010.

\bibitem[Olver(1993)]{olver1993applications}
Peter~J Olver.
\newblock \emph{Applications of Lie groups to differential equations}, volume
  107.
\newblock Springer Science \& Business Media, 1993.

\bibitem[{\"O}zkan et~al.(2020){\"O}zkan, Ya{\c{s}}ar, and
  Seadawy]{ozkan2020third}
Ye{\c{s}}im~Sa{\u{g}}lam {\"O}zkan, Emrullah Ya{\c{s}}ar, and Aly~R Seadawy.
\newblock A third-order nonlinear {S}chr{\"o}dinger equation: the exact
  solutions, group-invariant solutions and conservation laws.
\newblock \emph{Journal of Taibah University for Science}, 14\penalty0
  (1):\penalty0 585--597, 2020.

\bibitem[Paliathanasis et~al.(2016)Paliathanasis, Krishnakumar, Tamizhmani, and
  Leach]{paliathanasis2016lie}
Andronikos Paliathanasis, K~Krishnakumar, KM~Tamizhmani, and Peter~GL Leach.
\newblock Lie symmetry analysis of the {Black-Scholes-Merton} model for{
  Eur}opean options with stochastic volatility.
\newblock \emph{Mathematics}, 4\penalty0 (2):\penalty0 28, 2016.

\bibitem[Privault(2022)]{Privault_2022}
Nicolas Privault.
\newblock \emph{Short Rates and Bond Pricing}, pp.\  479–512.
\newblock Nov 2022.

\bibitem[Pucci \& Saccomandi(1993)Pucci and Saccomandi]{pucci1993potential}
Edvige Pucci and Giuseppe Saccomandi.
\newblock Potential symmetries and solutions by reduction of partial
  differential equations.
\newblock \emph{Journal of Physics A: Mathematical and General}, 26\penalty0
  (3):\penalty0 681, 1993.

\bibitem[Raissi et~al.(2019)Raissi, Perdikaris, and
  Karniadakis]{raissi2019physics}
Maziar Raissi, Paris Perdikaris, and George~E Karniadakis.
\newblock Physics-informed neural networks: A deep learning framework for
  solving forward and inverse problems involving nonlinear partial differential
  equations.
\newblock \emph{Journal of Computational physics}, 378:\penalty0 686--707,
  2019.

\bibitem[Rao et~al.(2016)]{rao2016high}
S~Chandra~Sekhara Rao et~al.
\newblock High-order numerical method for generalized {Black-Sch}oles model.
\newblock \emph{Procedia Computer Science}, 80:\penalty0 1765--1776, 2016.

\bibitem[Rodrigo \& Mamon(2006)Rodrigo and Mamon]{Rodrigo_Mamon_2006}
Marianito~R. Rodrigo and Rogemar~S. Mamon.
\newblock An alternative approach to solving the {Black-Scholes} equation with
  time-varying parameters.
\newblock \emph{Applied Mathematics Letters}, 19\penalty0 (4):\penalty0
  398–402, Apr 2006.

\bibitem[Rojas-Gomez et~al.(2024)Rojas-Gomez, Lim, Do, and
  Yeh]{rojas2024making}
Renan~A Rojas-Gomez, Teck-Yian Lim, Minh~N Do, and Raymond~A Yeh.
\newblock Making vision transformers truly shift-equivariant.
\newblock In \emph{Proceedings of the IEEE/CVF Conference on Computer Vision
  and Pattern Recognition}, pp.\  5568--5577, 2024.

\bibitem[Rotman(2012)]{rotman2012introduction}
Joseph~J Rotman.
\newblock \emph{An introduction to the theory of groups}, volume 148.
\newblock Springer Science \& Business Media, 2012.

\bibitem[Satorras et~al.(2021)Satorras, Hoogeboom, and Welling]{satorras2021n}
V{\i}ctor~Garcia Satorras, Emiel Hoogeboom, and Max Welling.
\newblock E (n) equivariant graph neural networks.
\newblock In \emph{International conference on machine learning}, pp.\
  9323--9332. PMLR, 2021.

\bibitem[Shawe-Taylor et~al.(1998)Shawe-Taylor, Bartlett, Williamson, and
  Anthony]{shawe1998structural}
John Shawe-Taylor, Peter~L Bartlett, Robert~C Williamson, and Martin Anthony.
\newblock Structural risk minimization over data-dependent hierarchies.
\newblock \emph{IEEE transactions on Information Theory}, 44\penalty0
  (5):\penalty0 1926--1940, 1998.

\bibitem[Shinde \& Takale(2012)Shinde and Takale]{shinde2012study}
AS~Shinde and KC~Takale.
\newblock Study of{ Black-Scholes} model and its applications.
\newblock \emph{Procedia Engineering}, 38:\penalty0 270--279, 2012.

\bibitem[Sussmann(1992)]{SUSSMANN1992589}
Héctor~J. Sussmann.
\newblock Uniqueness of the weights for minimal feedforward nets with a given
  input-output map.
\newblock \emph{Neural Networks}, 5\penalty0 (4):\penalty0 589--593, 1992.

\bibitem[Valkov(2014)]{Valkov_2014}
Radoslav Valkov.
\newblock Fitted finite volume method for a generalized {Black-Scholes}
  equation transformed on finite interval.
\newblock \emph{Numerical Algorithms}, 65\penalty0 (1):\penalty0 195–220, Jan
  2014.
\newblock \doi{10.1007/s11075-013-9701-3}.
\newblock URL \url{http://dx.doi.org/10.1007/s11075-013-9701-3}.

\bibitem[Wachowicz(2020)]{wachowicz2020wharton}
Erin Wachowicz.
\newblock Wharton research data services ({WRDS}).
\newblock \emph{Journal of Business and Finance Librarianship}, 25\penalty0
  (3-4):\penalty0 184--187, 2020.

\bibitem[Wong et~al.(2022)Wong, Ooi, Gupta, and Ong]{wong2022learning}
Jian~Cheng Wong, Chinchun Ooi, Abhishek Gupta, and Yew-Soon Ong.
\newblock Learning in sinusoidal spaces with physics-informed neural networks.
\newblock \emph{IEEE Transactions on Artificial Intelligence}, 2022.

\bibitem[Xie et~al.(2023)Xie, Ma, and Wang]{xie2023automatic}
Yuchen Xie, Yu~Ma, and Yahui Wang.
\newblock Automatic boundary fitting framework of boundary dependent
  physics-informed neural network solving partial differential equation with
  complex boundary conditions.
\newblock \emph{Computer Methods in Applied Mechanics and Engineering},
  414:\penalty0 116139, 2023.

\bibitem[Yang et~al.(2023)Yang, Gao, Lu, Duan, and Liu]{yang2023neural}
Luxuan Yang, Ting Gao, Yubin Lu, Jinqiao Duan, and Tao Liu.
\newblock Neural network stochastic differential equation models with
  applications to financial data forecasting.
\newblock \emph{Applied Mathematical Modelling}, 115:\penalty0 279--299, 2023.

\bibitem[Zhao et~al.(2023)Zhao, Ganev, Walters, Yu, and
  Dehmamy]{zhao2023symmetries}
Bo~Zhao, Iordan Ganev, Robin Walters, Rose Yu, and Nima Dehmamy.
\newblock Symmetries, flat minima, and the conserved quantities of gradient
  flow.
\newblock In \emph{The Eleventh International Conference on Learning
  Representations}, 2023.

\bibitem[Ziyin(2024)]{pmlr-v235-ziyin24a}
Liu Ziyin.
\newblock Symmetry induces structure and constraint of learning.
\newblock In \emph{Proceedings of the 41st International Conference on Machine
  Learning}, volume 235, pp.\  62847--62866, 2024.

\bibitem[Ziyin et~al.(2024)Ziyin, Wang, Li, and Wu]{ziyin2024parameter}
Liu Ziyin, Mingze Wang, Hongchao Li, and Lei Wu.
\newblock Parameter symmetry and noise equilibrium of stochastic gradient
  descent.
\newblock In \emph{The Thirty-eighth Annual Conference on Neural Information
  Processing Systems}, 2024.

\end{thebibliography}
\bibliographystyle{tmlr}
\newpage
\appendix
\section{Appendix}\label{AA}
 
The Appendix is divided into three parts: 1)  Section \ref{A1} provides the necessary definitions and lemmas, 2)  Section \ref{A2} includes the general form of the PDE, 3) Section \ref{A22} provides the knowledge about the Lie symmetries of Va\v{s}i\v{c}ek, 4) Section \ref{ex3} provieds the some experiments with different Lie symmetry operator conbinations, and 5)  Section \ref{A3} presents the theoretical analysis of LSN, including its approximation and generalization properties.

\subsection{Definitions and Technical Lemmas }\label{A1}
 In this section, we will present the definitions and lemmas required for our subsequent discussions.

\begin{definition}[Wiener process]\label{Wiener process}
The Wiener process (also known as Brownian motion) is a continuous-time stochastic process commonly used to model random walks. The standard definition of a Wiener process includes several key features:

1. Starting Point: The process starts at $W_0=0$, indicating that its initial position is zero.\\

2. Independent Increments: For all $0 \leq s<t$, the increments $W_t-W_s$ are mutually independent. This implies that the process is memory-less, and its future behavior is not influenced by its past.\\

3. Stationary Increments: For all $0 \leq s<t$, the distribution of the increment $W_t-W_s$ depends only on the time difference $t-s$, and is independent of the specific values of $s$ and $t$. Mathematically, this is expressed as $W_t-W_s \sim \mathcal{N}(0, t-s)$, where $\mathcal{N}(0, t-s)$ denotes a normal distribution with mean 0 and variance $t-s$.\\

4. Continuous Paths: The paths of the Wiener process are almost surely continuous. This means that the function $t \mapsto W_t$ is continuous with probability 1 .
\end{definition}

    \begin{definition}[European call options]\label{eco}
        European call options are financial derivatives granting the holder the right, without obligation, to purchase the underlying asset at a predetermined price upon expiration.
    \end{definition}

\begin{definition}[Lie symmetries \citep{gazizov1998lie}]\label{lso_defini}
    Consider second-order evolutionary PDEs:
    \begin{equation}\label{eq3}
        u_t - F(t,x,u,u_{(1)},u_{(2}) = 0,
    \end{equation}
    where $u$ is a function of independent variables $t$ and $x = (x^1,\cdots, x^n)$, and $u_{(1)}$, $u_{(2)}$ represent the sets of its first and second-order partial derivatives: $u_{(1)}=(u_{x^1},\cdots,u_{x^n})$,  $u_{(2)}=(u_{x^1x^1}, u_{x^1x^2},\cdots,u_{x^nx^n})$.  Transformations of the variables $t$, $x$, $u$  are given by:
    \begin{equation}
        \bar{t}=f(t, x, u, a), \quad \bar{x}^i=g^i(t, x, u, a), \quad \bar{u}=h(t, x, u, a), \quad i=1, \ldots, n,
    \end{equation}
    where these transformations depend on a continuous parameter $a$. These are defined as symmetry transformations of \cref{eq3} if the equation retains its form in the new variables $\bar{t}, \bar{x}, \bar{u}$. The collection 
$G$ of all such transformations forms a continuous group, meaning 
$G$ includes the identity transformation:
\begin{equation*}
\bar{t}=t, \quad \bar{x}^i=x^i, \quad \bar{u}=u,
\end{equation*}
the inverse of any transformation in $G$, and the composition of any two transformations in $G$. This symmetry group $G$ is also known as the group admitted by \cref{eq3}. According to the Lie group theory, constructing the symmetry group $G$ is equivalent to determining its infinitesimal transformations:
\begin{equation}\label{eq5}
    \bar{t} \approx t+a \xi^0(t, x, u), \quad \bar{x}^i \approx x^i+a \xi^i(t, x, u), \quad \bar{u} \approx u+a \eta(t, x, u) .
\end{equation}
For convenience, the infinitesimal transformation 
 \cref{eq5} can be represented by the operator:
\begin{equation*}
    X=\xi^0(t, x, u) \frac{\partial}{\partial t}+\xi^i(t, x, u) \frac{\partial}{\partial x^i}+\eta(t, x, u) \frac{\partial}{\partial u} .
\end{equation*}
\end{definition}



\begin{definition}[Conservation law \citep{khalique2018lie}]\label{conlaw}
 Any Lie point, Lie--Bäcklund or non-local symmetry 
\[ X = \xi^i(x, u, u^{(1)}, \ldots) \frac{\partial}{\partial x^i} + \eta(x, u, u^{(1)}, \ldots) \frac{\partial}{\partial u}, \]
of differential equation 
\begin{equation}\label{ddkkk}
   F(x, u, u^{(1)}, \ldots, u^{(s)}) = 0  
\end{equation} 
provides a conservation law 
\[ D_i(T^i) = 0, \]
for the system of differential equations comprising \cref{ddkkk} and the adjoint equation 
\[ F^*(x, u, v, u^{(1)}, v^{(1)}, \ldots, u^{(s)}, v^{(s)}) = \frac{\delta(v F)}{\delta u}. \]
The conserved vector is given by 
\[ \begin{aligned}
    T^i &= \xi^i L + W \left[ \frac{\partial L}{\partial u_i} - D_j \left( \frac{\partial L}{\partial u_{ij}} \right) + D_j D_k \left( \frac{\partial L}{\partial u_{ijk}} \right) - \cdots \right]\\ 
    &+ D_j(W) \left[ \frac{\partial L}{\partial u_{ij}} - D_k \left( \frac{\partial L}{\partial u_{ijk}} \right) + \cdots \right] + D_j D_k(W) \left[ \frac{\partial L}{\partial u_{ijk}} - \cdots \right] + \cdots,\end{aligned},  \]
where \( W \) and \( L \) are defined as 
\[ W = \eta - \xi^j u_j, \quad L = v F(x, u, u^{(1)}, \ldots, u^{(s)}).
\]
\end{definition}

\begin{definition}[Relative test error]\label{relativeerror}
    The relative test error between an approximate solution $\hat{u}(\mathcal{S})$ and an exact solution $u^*(\mathcal{S})$  on test collocation data $\mathcal{S}$ is defined as follows:
\begin{equation*}
    \text{Relative test error} = \left\|\frac{\hat{u}(\mathcal{S})-u^*(\mathcal{S})}{u^*(\mathcal{S})}\right\|.
\end{equation*}
\end{definition}

\begin{definition}\label{fkformula}
    [Feynman-Kac formula]\citep{del2004feynman} The Feynman-Kac formula  provides a critical theoretical framework to establish a connection between certain types of PDEs and SDEs. Given a  payoff function $f(x,t)$ and defined a discounting function $r(x,t)$ to calculate the present value of future payoffs, if $u(x,t)$ is a solution to the PDE: 
\begin{equation}
    \frac{\partial u}{\partial t} + \mu(x, t) \frac{\partial u}{\partial x} + \frac{1}{2} \sigma^2(x, t) \frac{\partial^2 u}{\partial x^2} - r(x, t) u = 0,
\end{equation}
 with the terminal condition $u(x, T) = f(x)$, then the solution $u(x, T) $ to this PDE can be represented as:
\begin{equation}
        E\left[e^{-\int_t^T r(X_s, s) ds} f(X_T) \mid X_t=x\right],
\end{equation}
where $X_T$ denotes the value of \cref{sde1} at time $T$.

\end{definition}

\begin{lemma}[Itô’s lemma \citep{ito}]\label{Itos lemma}
    The general form of Itô's Lemma for a function $f\left(t, X_t\right)$ of time $t$ and a stochastic process $X_t$ satisfying a stochastic differential equation is given by
\begin{equation*}
d f\left(t, X_t\right)=\left(\frac{\partial f}{\partial t}+\mu \frac{\partial f}{\partial x}+\frac{1}{2} \sigma^2 \frac{\partial^2 f}{\partial x^2}\right) d t+\sigma \frac{\partial f}{\partial x} d W_t.
\end{equation*}
Here
 $t$ represents time,
 $X_t$ is a stochastic process satisfying a stochastic differential equation $d X_t=\mu d t+$ $\sigma d W_t$,
  $f\left(t, X_t\right)$ is the function of interest.
  $\frac{\partial f}{\partial t}, \frac{\partial f}{\partial x}$, and $\frac{\partial^2 f}{\partial x^2}$ denote the partial derivatives of $f$ with respect to time and the state variable $x$,
  $\mu$ is the drift coefficient in the SDE,  $\sigma$ is the diffusion coefficient in the SDE and 
  $d W_t$ is the differential of a Wiener process.
\end{lemma}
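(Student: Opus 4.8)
The plan is to establish the equivalent integral identity
\begin{equation*}
f(t,X_t) - f(0,X_0) = \int_0^t \Bigl(\frac{\partial f}{\partial s} + \mu\,\frac{\partial f}{\partial x} + \tfrac12\,\sigma^2\,\frac{\partial^2 f}{\partial x^2}\Bigr)\,ds + \int_0^t \sigma\,\frac{\partial f}{\partial x}\,dW_s,
\end{equation*}
of which the stated differential formula is merely shorthand. I would assume $f\in C^{1,2}([0,T]\times\R)$, and that $\mu=\mu(X_s,s)$ and $\sigma=\sigma(X_s,s)$ are progressively measurable with $\int_0^t(|\mu|+\sigma^2)\,ds<\infty$ almost surely, so that $X_s=X_0+\int_0^s\mu\,dr+\int_0^s\sigma\,dW_r$ is a genuine It\^o process. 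A routine localisation---stopping at the first time any of $|X_s|$, $|\mu|$, $\sigma$, or the relevant derivatives of $f$ exceeds a level $m$, proving the identity up to that time, and letting $m\to\infty$---reduces matters to the case where all these quantities are uniformly bounded, which I would work with from here on.

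Next I would fix a partition $0=t_0<t_1<\dots<t_n=t$ with mesh $\delta=\max_k(t_{k+1}-t_k)$ and telescope: $f(t,X_t)-f(0,X_0)=\sum_{k=0}^{n-1}\bigl(f(t_{k+1},X_{t_{k+1}})-f(t_k,X_{t_k})\bigr)$. Taylor-expanding each increment to first order in $s$ and second order in $x$, and writing $\Delta t_k=t_{k+1}-t_k$, $\Delta X_k=X_{t_{k+1}}-X_{t_k}$, with derivatives evaluated at $(t_k,X_{t_k})$, gives
\begin{equation*}
f(t_{k+1},X_{t_{k+1}})-f(t_k,X_{t_k}) = \frac{\partial f}{\partial s}\,\Delta t_k + \frac{\partial f}{\partial x}\,\Delta X_k + \tfrac12\,\frac{\partial^2 f}{\partial x^2}\,(\Delta X_k)^2 + R_k,
\end{equation*}
where $R_k$ collects the $O(\Delta t_k^2)$, mixed $O(\Delta t_k|\Delta X_k|)$, and $O(|\Delta X_k|^3)$ remainders. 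As $\delta\to0$: the first sum tends to $\int_0^t\partial_s f\,ds$ by continuity; the second tends to $\int_0^t\partial_x f\,\mu\,ds+\int_0^t\partial_x f\,\sigma\,dW_s$, since these are exactly the approximating sums for integration against the drift and martingale parts of $X$; and $\sum_k R_k\to0$ in probability, the mixed and cubic terms being controlled by the modulus of continuity of $s\mapsto X_s$ against the bounded quadratic variation of $X$, and the purely temporal terms by $\sum_k\Delta t_k^2\le\delta t$.

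The crux---and the step I expect to be the main obstacle---is the convergence of the second-derivative sum,
\begin{equation*}
\sum_{k=0}^{n-1}\frac{\partial^2 f}{\partial x^2}(t_k,X_{t_k})\,(\Delta X_k)^2 \;\longrightarrow\; \int_0^t \frac{\partial^2 f}{\partial x^2}(s,X_s)\,\sigma(X_s,s)^2\,ds \quad\text{in probability},
\end{equation*}
which is precisely where the It\^o correction term $\tfrac12\sigma^2\partial_x^2 f$ is born. I would expand $(\Delta X_k)^2=\bigl(\int_{t_k}^{t_{k+1}}\sigma\,dW_r\bigr)^2+2\bigl(\int_{t_k}^{t_{k+1}}\mu\,dr\bigr)\bigl(\int_{t_k}^{t_{k+1}}\sigma\,dW_r\bigr)+\bigl(\int_{t_k}^{t_{k+1}}\mu\,dr\bigr)^2$, discard the last term (size $O(\Delta t_k^2)$) and the cross term ($O(\Delta t_k^{3/2})$ in $L^2$), and then replace $\bigl(\int_{t_k}^{t_{k+1}}\sigma\,dW_r\bigr)^2$ by its $\mathcal{F}_{t_k}$-conditional mean $\int_{t_k}^{t_{k+1}}\sigma^2\,dr$. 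The accumulated error of this substitution is a sum of martingale differences, so by orthogonality of the increments together with the uniform bounds on $\sigma^2$ and $\partial_x^2 f$ its $L^2$ norm is $O\bigl(\sum_k\Delta t_k^2\bigr)=O(\delta t)\to0$; equivalently, this is the statement that the quadratic variation of $X$ is $d[X]_s=\sigma^2\,ds$. Assembling the four limits gives the integral identity, hence It\^o's lemma.
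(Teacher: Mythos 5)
The paper does not prove this lemma at all: it is stated as a quoted classical result with a citation to It\^o's original work, so there is no in-paper argument to compare against. Your sketch is a correct rendition of the standard textbook proof (localisation, telescoping over a partition, Taylor expansion to order one in $t$ and two in $x$, identification of the first-order sums with the Lebesgue and It\^o integrals, and the quadratic-variation argument---conditional-mean replacement plus orthogonality of martingale increments---that produces the correction term $\tfrac12\sigma^2\partial_x^2 f$). The one point worth tightening if this were written out in full is that the second-order Taylor term is evaluated at an intermediate point, so you also need the uniform continuity of $\partial_x^2 f$ on the localised compact set to swap it for the value at $(t_k,X_{t_k})$; your appeal to the modulus of continuity covers this implicitly. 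Since the lemma is standard and merely cited by the authors, a proof is not expected here, but your sketch would serve as a valid self-contained justification.
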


\begin{lemma}[Dynkin's formula \citep{Klebaner_2012}]\label{lem:dy}
 For every $x \in \Omega$, let $X^x$ be the solution to a linear  PDE \cref{3} with affine $\mu: \mathbb{R}^d \rightarrow \mathbb{R}^d$ and $\sigma: \mathbb{R}^d \rightarrow \mathbb{R}^{d \times d}$. If $\varphi \in C^2\left(\mathbb{R}^d\right)$ with bounded first partial derivatives, then it holds that $\left(\partial_t u\right)(x, t)=$ $\mathcal{L}[u](x, t)$ where $u$ is defined as
\begin{equation}\label{eq:kk}
    u(x, t)=\varphi(x)+\mathbb{E}\left[\int_0^t(\mathcal{F} \varphi)\left(X_\tau^x\right) d \tau\right], \quad \text { for } x \in \Omega,\quad t \in[0, T],
\end{equation}
    where
    \begin{equation}
\begin{aligned}
        dX_t^x &= \mu(X_t^x)dt +\sigma(X_t^x)dW_t,
    \quad X_0^x = x, \\
    (\mathcal{F}\varphi)(X_t^x)&=\sum_{i=1}^d\mu_i(X_t^x)(\partial_i \varphi) (X_t^x) +\frac{1}{2}\sum_{i,j,k=1}^d\sigma_{i,k}(X_t^x)\sigma_{kj}(X_t^x)(\partial_{ij}^2 \varphi)(X_t^x),
\end{aligned}
\end{equation} 
where $W_t$ is a standard $d$-dimensional Brownian motion on  probability space $\left(\Omega,\mathcal{F},P,(\mathbb{F}_t)_{t\in[0,T]}\right)$, and $\mathcal{F}$ is the generator of $X_t^x$.
\end{lemma}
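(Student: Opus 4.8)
\begin{hproof}
The plan is to first convert the defining formula for $u$ into the probabilistic identity $u(x,t)=\mathbb{E}[\varphi(X^x_t)]$ (Dynkin's formula proper), and then to differentiate that identity in $t$ by combining it with the Markov property of $X$ and a second application of itself. Throughout, affineness of $\mu$ and $\sigma$ supplies, via the It\^{o} isometry / Burkholder--Davis--Gundy inequality and Gr\"onwall's lemma, the moment bounds $\mathbb{E}\big[\sup_{s\le T}\|X^x_s\|^{p}\big]<\infty$ for every $p\ge 1$, locally uniformly in $x$; these are what make the stochastic integrals and Fubini interchanges below legitimate.

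For the first stage I would apply It\^{o}'s lemma (\cref{Itos lemma}, in its multidimensional form) to $s\mapsto\varphi(X^x_s)$, which gives
\begin{equation*}
\varphi(X^x_t)=\varphi(x)+\int_0^t(\mathcal{F}\varphi)(X^x_s)\,ds+\int_0^t(\nabla\varphi)(X^x_s)^{\top}\sigma(X^x_s)\,dW_s .
\end{equation*}
Since $\nabla\varphi$ is bounded and $\sigma$ has at most linear growth, the moment bounds make the stochastic integrand square-integrable on $\Omega\times[0,T]$, so that term is a true martingale and vanishes under $\mathbb{E}[\cdot]$; a Fubini step (using polynomial growth of $\mathcal{F}\varphi$) then yields $\mathbb{E}[\varphi(X^x_t)]=\varphi(x)+\mathbb{E}\big[\int_0^t(\mathcal{F}\varphi)(X^x_s)\,ds\big]=u(x,t)$, i.e. $u(\cdot,t)=P_t\varphi$ for the transition semigroup $P_t$ of $X$.

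For the second stage, conditioning on $\mathcal{F}_h$ and using the flow property $X^x_{h+s}\stackrel{d}{=}X^{X^x_h}_s$ gives $u(x,t+h)=\mathbb{E}[u(X^x_h,t)]$; applying the identity just proved to the spatial function $y\mapsto u(y,t)$ then gives $u(x,t+h)-u(x,t)=\mathbb{E}\big[\int_0^h(\mathcal{F}[u(\cdot,t)])(X^x_s)\,ds\big]$. Dividing by $h$, letting $h\downarrow 0$, and invoking continuity of $s\mapsto\mathbb{E}[(\mathcal{F}[u(\cdot,t)])(X^x_s)]$ identifies $\partial_t u(x,t)=\mathcal{F}[u(\cdot,t)](x)=\mathcal{L}[u](x,t)$ (the original integral representation already shows $u$ is $C^1$ in $t$, so this one-sided limit is the genuine derivative), which is the asserted PDE.

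I expect the real obstacle to sit in the regularity prerequisite of the second stage: applying the Dynkin identity to $u(\cdot,t)$ requires knowing $u(\cdot,t)\in C^2(\mathbb{R}^d)$ with polynomially growing derivatives. The plan there is to differentiate $x\mapsto\mathbb{E}[\varphi(X^x_t)]$ under the expectation, which is licensed because the stochastic flow $x\mapsto X^x_t$ depends smoothly on its initial condition (the coefficients being affine, hence $C^{\infty}$) and its first and second variations solve linear SDEs driven by $X$, so they possess moments of all orders; boundedness of $\nabla\varphi$ together with $\varphi\in C^2$ then controls the resulting expectations. Carrying out this interchange rigorously, together with the integrability bookkeeping for $\mathcal{F}\varphi$ (whose second-order part multiplies the merely continuous $\nabla^2\varphi$ against the quadratically growing $\sigma\sigma^{\top}$), is the technical heart of the argument; everything else reduces to It\^{o}'s lemma and the tower property.
\end{hproof}
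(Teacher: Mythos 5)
The paper does not prove this lemma at all: it is imported verbatim (from \citet{Klebaner_2012}, and in this exact form from the PINN error-analysis literature of \citet{de2022error}) as a known result, so there is no in-paper argument to compare yours against. Taken on its own, your sketch is the standard and essentially correct derivation: It\^{o}'s lemma plus the martingale property of the stochastic integral gives $u(\cdot,t)=P_t\varphi$, and the Markov/flow property plus a second application of that identity to $y\mapsto u(y,t)$ yields $\partial_t u=\mathcal{F}[u(\cdot,t)]=\mathcal{L}[u]$; this is exactly the semigroup proof one finds in the cited references. You have also correctly located the two genuine technical pressure points. First, the hypotheses as stated ($\varphi\in C^2$ with bounded \emph{first} partials only) do not by themselves control the second-order part of $\mathcal{F}\varphi$, since $\nabla^2\varphi$ may grow arbitrarily while $\sigma\sigma^{\top}$ grows quadratically; without an additional growth assumption on $\nabla^2\varphi$ (or a localization by stopping times) even the expectation defining $u$ need not converge. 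That is a defect of the lemma statement inherited from the source, not of your argument, but a complete proof must either add that hypothesis or localize. Second, applying the first-stage identity to $u(\cdot,t)$ requires $u(\cdot,t)\in C^2$ with controlled derivatives, which you rightly propose to obtain by differentiating the stochastic flow in its initial condition — legitimate here precisely because $\mu$ and $\sigma$ are affine. With those two points made explicit, your proof is sound and, unlike the paper, self-contained.
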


\begin{lemma}[\citep{de2022error}]\label{coro2}
 Let $d, L, W \in \mathbb{N}, R \geq 1, L, W \geq 2$, let $\mu$ be a probability measure on $\Omega=[0,1]^d$, let $f: \Omega \rightarrow[-R(W+1), R(W+1)]$ be a function and let $f_\theta: \Omega \rightarrow \mathbb{R}, \theta \in \Theta$, be tanh neural networks with at most $L-1$ hidden layers, width at most $W$ and weights and biases bounded by $R$. For every $0<\epsilon<1$, it holds for the generalisation and training error \cref{jT} that,
\begin{equation*}
\mathbb{P}\left(\mathcal{E}_G\left(\theta^*(\mathcal{S})\right) \leq \epsilon+\mathcal{E}_T\left(\theta^*(\mathcal{S}), \mathcal{S}\right)\right) \geq 1-\eta \quad \text { if } \quad N \geq \frac{64 d(L+3)^2 W^6 R^4}{\epsilon^4} \ln \left(\frac{4 \sqrt[5]{d+4} R W}{\epsilon}\right).
\end{equation*}
\end{lemma}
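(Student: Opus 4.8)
\begin{hproof}
This is the generalisation bound of \citet{de2022error}; we only indicate the strategy, which is a standard covering-number argument for uniform convergence. Writing the per-sample loss as $\ell_\theta(z)$ (the squared residual appearing in \cref{srisk}, evaluated at a collocation point $z$), so that $\mathcal{E}_G(\theta)=\mathbb{E}_z[\ell_\theta(z)]$ and $\mathcal{E}_T(\theta,\mathcal{S})=\tfrac1N\sum_n\ell_\theta(z_n)$, the plan is to establish the one-sided uniform bound $\sup_{\theta\in\Theta}\big(\mathcal{E}_G(\theta)-\mathcal{E}_T(\theta,\mathcal{S})\big)\le\epsilon$ with probability at least $1-\eta$, which in particular applies to the data-dependent minimiser $\theta^*(\mathcal{S})$.

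First I would establish \emph{uniform boundedness}: since $|\tanh|\le1$ and $\tanh$ has bounded derivatives of every order, propagating these bounds layer by layer through a depth-$(L-1)$, width-$W$ network with weights and biases bounded by $R$ shows that $f_\theta$ (and, where the residual needs them, its first and second spatial derivatives) is bounded on $\Omega$ by an explicit polynomial in $R$ and $W$ — for the output itself this is exactly the interval $[-R(W+1),R(W+1)]$ assumed in the statement — so that $0\le\ell_\theta\le M$ with $M=\mathrm{poly}(R,W)$. Next I would prove a \emph{parametric Lipschitz estimate}: differentiating the network in its parameters and again propagating the previous bounds through the $L-1$ layers via the chain rule makes $\theta\mapsto\ell_\theta(z)$ Lipschitz uniformly in $z$, with constant $\mathrm{Lip}=\mathrm{poly}(R,W)\,(L+3)^2$ (the depth entering through the iterated chain rule); hence $\theta\mapsto\mathcal{E}_G(\theta)$ and $\theta\mapsto\mathcal{E}_T(\theta,\mathcal{S})$ are $\mathrm{Lip}$-Lipschitz, the latter uniformly in $\mathcal{S}$.

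Then I would cover the parameter set: $\Theta\subseteq[-R,R]^{n_\theta}$ with $n_\theta\lesssim LW^2$ coordinates admits a $\delta$-net $\Theta_\delta$ with $\ln|\Theta_\delta|\lesssim n_\theta\ln(1+2R\sqrt{n_\theta}/\delta)$; taking $\delta=\epsilon/(3\,\mathrm{Lip})$ ensures every $\theta$ has a net point $\theta'$ with $|\mathcal{E}_G(\theta)-\mathcal{E}_G(\theta')|\le\epsilon/3$ and $|\mathcal{E}_T(\theta,\mathcal{S})-\mathcal{E}_T(\theta',\mathcal{S})|\le\epsilon/3$. Finally I would apply Hoeffding's inequality to the i.i.d.\ bounded variables $\ell_{\theta'}(z_n)\in[0,M]$ for each fixed $\theta'\in\Theta_\delta$, giving $\mathbb{P}(\mathcal{E}_G(\theta')-\mathcal{E}_T(\theta',\mathcal{S})>\epsilon/3)\le\exp(-2N\epsilon^2/(9M^2))$, take a union bound over $\Theta_\delta$, combine with the triangle inequality from the covering step to obtain the $\epsilon$-bound for all $\theta$, require the resulting probability $|\Theta_\delta|\exp(-2N\epsilon^2/(9M^2))$ to be at most $\eta$, and substitute $M\asymp R^2W^2$, $\mathrm{Lip}\asymp\mathrm{poly}(R,W)(L+3)^2$, $n_\theta\asymp LW^2$; elementary bookkeeping of the constants and logarithms then yields the stated threshold $N\ge\frac{64\,d\,(L+3)^2W^6R^4}{\epsilon^4}\ln\!\big(\frac{4\sqrt[5]{d+4}\,RW}{\epsilon}\big)$.

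The main obstacle is the pair of a priori estimates in the second paragraph: getting \emph{explicit} bounds on the $C^2$-norm of a tanh network and on the parametric Lipschitz constant of the induced loss that depend only \emph{polynomially} (not exponentially) on the depth $L$. This is precisely where the uniformly bounded derivatives of $\tanh$ and the weight constraint $\|\cdot\|\le R$ are essential, and it is the technical heart of \citet{de2022error}; everything else is the routine net-plus-Hoeffding machinery.
\end{hproof}
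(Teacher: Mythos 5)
The paper offers no proof of this lemma at all: it is imported verbatim from \citet{de2022error}, and your covering-number-plus-Hoeffding sketch (uniform boundedness of the loss, parametric Lipschitz estimate with polynomial depth dependence, $\delta$-net of $\Theta\subseteq[-R,R]^{n_\theta}$, Hoeffding plus union bound) is exactly the strategy used in that source, so your reconstruction is essentially the intended proof. The only point worth flagging is in the statement rather than in your argument: the displayed sample-size threshold contains no dependence on the confidence level $\eta$, so your final step of requiring $|\Theta_\delta|\exp(-2N\epsilon^2/(9M^2))\le\eta$ cannot literally reproduce the quoted bound as written --- in the original reference the corresponding condition carries the $\eta$-dependence that has been dropped here.
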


\subsection{General PDEs}\label{A2}
In this section, we will demonstrate the transformation of the BS equation and the Va\v{s}i\v{c}ek   equation into a general form, i.e.,   \cref{3}. 

\textbf{Black-Scholes equation.} 
As detailed in the main text, the specific expression of the Black-Scholes \cref{BSMs} is 
\begin{equation}\label{clclc}
\left\{ 
\begin{array}{ll}
        \frac{\partial u_\prime}{\partial t_\prime}+\frac{1}{2}\sigma^2x_\prime^2\frac{\partial ^2 u_\prime}{\partial x_\prime^2}+ rx_\prime \frac{\partial u_\prime}{\partial x_\prime} -r u_\prime=0, & (x_\prime,t_\prime)\in \Omega\times[0,T],\\
        u_\prime(T,x_\prime) = \max (x_\prime-K,0), & x_\prime \in \Omega,\\
        u_\prime(t_\prime,0) = 0, & t_\prime\in[0,T].
\end{array}\right.
\end{equation}
Let's $t = T-t_\prime, \in[T,0]$, $x = x_\prime\in \Omega$. Then the BS  \cref{clclc} can be transformed into a more generalised initial-boundary value problem  \citep{cervera2019solution},
\begin{equation}\label{eq:bs0}
\left\{ 
\begin{array}{ll}
        -\frac{\partial u}{\partial t}+\frac{1}{2}\sigma^2x^2\frac{\partial ^2 u}{\partial x^2}+ r x \frac{\partial u}{\partial x} -ru=0, & (x,t)\in \Omega\times[0,T],\\
        u(0,x) = \max (x-K,0), & x \in \Omega\\
        V(t,0) = 0, & t\in[T,0].
\end{array}\right.
\end{equation}
Here $\mathbf{L}[u]$ in \cref{3} for  \cref{eq:bs0} is $\mathbf{L}[u] = \frac{1}{2}\sigma^2x^2\frac{\partial^2u(x, t)}{\partial x^2}+rx\frac{\partial u(x, t)}{\partial x} - ru(x, t)$ with $\sigma(x) = \sigma x$, $\mu(x) = rx$, $\upsilon(x) = -r$ and $\varphi(x) = max(x-K,0)$.

\textbf{Va\v{s}i\v{c}ek  Equation \citep{Privault_2022}. } 
In a financial market characterised by short-term lending transactions between financial institutions, the evolution of short-term interest rates can be modeled by the following SDE:
\begin{equation}\nonumber
    dx_t = \lambda (\beta-x_t)dt +\sigma dW_t,
\end{equation}
where $\lambda, \beta > 0 $, $\sigma$ are constants and $W_t$ is the Wiener process. Using the Feynman-Kac formula we can obtain the Va\v{s}i\v{c}ek  pricing equation which is used to price risk-free bonds $u(x,t)$:
\begin{equation}\label{aopopp}\left\{
\begin{array}{ll}
    \frac{\partial u}{\partial t} +\alpha \frac{\partial u}{\partial x^2}+\lambda(\beta-x)\frac{\partial u}{\partial x} +\gamma x u=0 & \Omega \times[0, T], \\
        u(x,T)  = 1 & \Omega\times T,\\
        u(x,t) = \psi(x,t) & \partial\Omega\times[0,T].
\end{array}\right.
\end{equation}
where $\alpha=\frac{1}{2}\sigma^2$, $\gamma = -1$ and $\psi(x,t)$ is the boundary conditions.  The zero-coupon bond price in the Va\v{s}i\v{c}ek pricing model is given by \citep{khalique2018lie}:
\begin{equation}\label{dxdl}
        u(x,t) = e^{A(T-t)+xC(T-t)},
\end{equation}
where $C(t) = -\frac{1}{\lambda}\left(1-e^{-\lambda t}\right)$ and  $A(t)  = \frac{4\lambda^2\beta-3\sigma^2}{4\lambda^3}+\frac{\sigma ^2-2\lambda^2\beta}{2\lambda^2}t+\frac{\sigma^2-\lambda^2\beta}{\lambda^3}e^{-\lambda t}-\frac{\sigma^2}{4\lambda^3}e^{-2\lambda t}$.

Similarly, we can express the Va\v{s}i\v{c}ek   equation in a general form as follows:
\begin{equation}\label{ret}
\left\{\begin{array}{ll}
u_t(x, t)=\mathcal{L}[u], & \text { for all }(x, t) \in \Omega \times[0, T], \\
u(0, x)=\varphi(x), & \text { for all } x \in  \Omega , \\
u(y, t)=\psi(y, t), & \text { for all }(y, t) \in \partial  \Omega  \times[0, T] ,
\end{array}\right.
\end{equation}
where $\mathbf{L}[u]$  in \cref{3} for  \cref{aopopp} is $\mathbf{L}[u] =\alpha u_{xx}+\lambda(\beta-x)u_x +\gamma x u $ with  $\sigma(x) = \sqrt{2\alpha}$, $\mu(x) = \lambda(\beta-sx)$, $
\upsilon(x) = \gamma x$ and $\varphi(x) = 1$.

\subsection{Lie Symmetries of Va\v{s}i\v{c}ek Equation}\label{A22}

\textbf{Lie Symmetry Operator. }
Lie symmetry operator is a major mathematical tool for characterizing the symmetry in PDEs (see \cref{lso_defini}) \citep{paliathanasis2016lie}. The  Lie symmetry operators \citep{khalique2018lie} of Va\v{s}i\v{c}ek   \cref{aopopp} are given by the vector field
\begin{equation}\label{vas}
\begin{aligned}
G_\phi= & \,\phi(t, x) \frac{\partial}{\partial u}, G_1=\frac{\partial}{\partial t},\\
G_2= & \,e^{2 \lambda t} \frac{\partial}{\partial t}+\frac{e^{2 \lambda t}}{\lambda}\left(\lambda^2 x-2 \alpha \gamma-\beta \lambda^2\right) \frac{\partial}{\partial x}  +\frac{u e^{2 \lambda t}}{\alpha \lambda^2}\left(\alpha^2 \gamma^2+2 \alpha \beta \gamma \lambda^2-\alpha \lambda^3-3 \alpha \gamma \lambda^2 x+\lambda^4(\beta-x)^2\right) \frac{\partial}{\partial u}, \\
G_3= & \,e^{-2 \lambda t}\left[-\frac{\partial}{\partial t}+\frac{1}{\lambda}\left(\lambda^2(x-\beta)-2 \alpha \gamma\right) \frac{\partial}{\partial x}+\frac{\gamma u}{\lambda^2}\left(\lambda^2 x-\alpha \gamma\right) \frac{\partial}{\partial u}\right], \\
G_4= & e^{\lambda t}\left[\frac{\partial}{\partial x}+\frac{u}{\alpha \lambda}\left(-\alpha \gamma-\beta \lambda^2+\lambda^2 x\right) \frac{\partial}{\partial u}\right], 
G_5=  \,e^{-\lambda t}\left[\frac{\partial}{\partial x}+\frac{\gamma u}{\lambda} \frac{\partial}{\partial u}\right], 
G_6=u \frac{\partial}{\partial u}.
\end{aligned}
\end{equation}

These Lie symmetry operators not only provide a deeper insight into the structure of the PDEs but also form the foundation for deriving conservation laws associated with these equations. 

\textbf{Conservation Law. }
To ascertain the Lie conservation law operators for the Va\v{s}i\v{c}ek   \cref{aopopp}, it is necessary to analyze its adjoint equation, as follows
\begin{equation}
    \begin{aligned}
        \frac{\partial \nu}{\partial t}-\alpha \frac{\partial \nu}{\partial x^2} - \lambda (x-\beta)\frac{\partial \nu}{\partial x} -(\lambda+\gamma x)\nu = 0.
    \end{aligned}
\end{equation}
where $\nu\neq 0$ is a new dependent variable $    \nu = e^{pt+qx}$ with $  p = \alpha q^2-\lambda \beta q  +\lambda$ and $
        q = -\frac{\gamma}{\lambda}$(Here, only one example is presented for illustration purposes, although there exist numerous solutions to this set of adjoint equation). For illustrative purposes, we choose the relatively simple operator $G_5$ and $G_6$  as examples of the Va\v{s}i\v{c}ek   \cref{vas} and provide the corresponding conserved quantities \citep{khalique2018lie}:
\begin{equation*}
\begin{aligned}
   & \left\{
    \begin{aligned}
        T_5^t(u,x,t)= & \frac{1}{\gamma \lambda} e^{-\lambda t} \nu\left(\lambda \frac{\partial u}{\partial x} -\gamma u\right), \\
        T_5^x(u,x,t)= & \frac{1}{\gamma \lambda} e^{-\lambda t}\left\{\gamma u\left(\alpha \frac{\partial \nu}{\partial x} -\beta \lambda \nu\right)-\alpha \frac{\partial u}{\partial x} \left(\gamma \nu+\lambda \frac{\partial \nu}{\partial x} \right)-\lambda \frac{\partial u}{\partial t}  \nu\right\}; 
   \end{aligned}
   \right.\\
   &\left\{
   \begin{aligned}
        T_6^t(u,x,t)= & u \nu,\\
        T_6^x(u,x,t)= & \alpha \frac{\partial u}{\partial x}  \nu-u\left\{\lambda(x-\beta) \nu+\alpha \frac{\partial \nu}{\partial x} \right\}.
   \end{aligned}
   \right.
\end{aligned}
\end{equation*}

In  \cref{ex3}, we validate the general applicability of LSN by extending it to the Va\v{s}i\v{c}ek  model and showing the adaptability of different Lie symmetry operators.

\subsection{Theoretical Analysis}\label{A3}
Given the wide range of choices for Lie symmetry operators, we use the BS equation with the selected lie operator $G_2=x\frac{\partial}{\partial x}$ of \cref{bs_con} as an example to theoretically demonstrate the effectiveness of our method. The corresponding conservation law  \cref{clom} is as follows,
\begin{equation}\label{123}
\mathcal{R}_{Lie}[\hat u]  := D_tT^t_2(\hat u) + D_xT^x_2(\hat u),
\end{equation}
where
\begin{equation}\label{liang}
\left\{
    \begin{aligned}
         T^t_2(\hat u)&=-\hat u_x l(t)+\frac{a}{x}+\frac{2 b\hat u}{\sigma^2 x} \mathrm{e}^{-r t},\\  
T^x_2(\hat u)&=\hat u_t l(t)+\hat u l^{\prime}(t)+g(t)-b \hat u \mathrm{e}^{-r t}+b\left(\hat u_x+\frac{2 r\hat u}{\sigma^2 x}\right) x \mathrm{e}^{-r t}.
    \end{aligned}\right.
\end{equation}
Performing operator calculations with the conserved quantities $(T_2^t,T_2^x)$ substituted into the  \cref{123} yields 
\begin{equation}\nonumber
\left\{
\begin{aligned}
        D_tT^t_2(\hat{u}) &= - \hat{u}_{xt}l(t)-\hat{u}_xl_t(t) + \frac{2b\hat{u}_t}{\sigma^2x}e^{-rt}-\frac{2rb\hat{u}}{\sigma^2x}e^{-rt},\\
        D_xT^x_2(\hat{u}) &= \hat{u}_{tx}l(t) +\hat{u}_xl_t(t)-b\hat{u}_xe^{-rt}+b\left(\hat{u}_{xx}+\frac{2r\hat{u}_x}{\sigma^2x}\right.\left.-\frac{2r\hat{u}}{\sigma^2x^2}\right)xe^{-rt}+b\left(\hat{u}_x+\frac{2r\hat{u}}{\sigma^2 x}\right)e^{-rt}.
\end{aligned}\right.
\end{equation}
Therefore, we have
\begin{equation}\nonumber
    \begin{aligned}
        D_tT^t_2(\hat{u})+D_xT^x_2(\hat{u})
        &= \frac{2b\hat{u}_t}{\sigma^2x}e^{-rt}-b\hat{u}_xe^{-rt}+bx\hat{u}_{xx}e^{-rt} +\frac{2rb\hat{u}_x}{\sigma^2}e^{-rt}-\frac{2rb\hat{u}}{\sigma^2x}e^{-rt}+b\hat{u}_xe^{-rt}\\
        &=bxe^{-rt}\hat{u}_{xx}+\frac{2b}{\sigma^2x}e^{-rt}\hat{u}_t+\frac{2rb}{\sigma^2}e^{-rt}\hat{u}_x -\frac{2rb}{\sigma^2x}e^{-rt}\hat{u}\\
        & = \frac{2be^{-rt}}{\sigma^2x}\left(\hat{u}_t+\frac{1}{2}\sigma^2x^2\hat{u}_{xx}+rx\hat{u}_x-r\hat{u}\right).\\
    \end{aligned}
\end{equation}
Since $b$ is arbitrarily chosen, let's set $b = x_{min}$. Where $x_{min}$ represents the smallest x-coordinate among the points in the configuration set. And $(x,t)\in \Omega\times[0,T]$ represents a bounded interior region, where $x$ and $t$ are within the specified domain $\Omega$ and time interval $[0,T]$ respectively. Therefore, there exists a positive number $M>0$ such that
\begin{equation}
0<\left|\frac{2be^{-rt}}{\sigma^2 x}\right|^2<\left\|\frac{2be^{-rt}}{\sigma^2 x}\right\|^2_\infty = \left\|\frac{2x_{min}e^{-rt}}{\sigma^2 x}\right\|^2_\infty\leq\left(\frac{2x_{min}e^{-rT}}{\sigma^2 x_{min}}\right)^2\leq\left(\frac{2e^{-rT}}{\sigma^2 }\right)^2 := M.
\end{equation}
Therefore,
\begin{equation}
\begin{aligned}
\mathcal{L}_{Lie}[\hat{u}]& = \|\mathcal{R}_{Lie}[\hat{u}]\|^2=\|D_tT^t_2(\hat u) + D_xT^x_2(\hat u)\|^2\\
&=\| \frac{2be^{-rt}}{\sigma^2x}\left(\hat{u}_t+\frac{1}{2}\sigma^2x^2\hat{u}_{xx}+rx\hat{u}_x-r\hat{u}\right)\|^2\\
&\leq M \|\hat{u}_t+\frac{1}{2}\sigma^2x^2\hat{u}_{xx}+rx\hat{u}_x-r\hat{u}\|^2\\
&=M\|\mathcal{R}_{PDE}[\hat{u}]\|^2.
\end{aligned}
\end{equation}

\subsubsection{Approximation Error Bounds of LSN}

The  PDE in \cref{3} is a linear parabolic equation with smooth coefficients, and conclusions about the existence of a unique classical solution $u$ to the equation, which is sufficiently regular, can be derived using standard parabolic theory. If $u$ is considered a classical solution, then the residual concerning $u$ should be zero.
\begin{equation}
    \mathcal{R}_i[u](x,t)=0,\quad \mathcal{R}_s[u](y,t)=0,\quad \mathcal{R}_t[u](x)=0,\quad \mathcal{R}_{Lie}[u](x,t)=0,\quad\forall x\in \Omega,\quad y\in\partial \Omega.
\end{equation}
Here $\mathcal{R}_{Lie}[u](x,t)= \frac{2be^{-rt}}{\sigma^2x}\left(u_t+\frac{1}{2}\sigma^2x^2u_{xx}+rxu_x-ru\right)=\frac{2be^{-rt}}{\sigma^2x}\mathcal{R}_i[u](x,t)=0$ (with $\frac{2be^{-rt}}{\sigma^2x}\not=0$.) 
We first list several crucial lemmas used to prove the approximation error of LSN. 
\begin{lemma}
    Let $T>0$ and $\gamma, d, s \in \mathbb{N}$ with $s \geq 2+\gamma$. Suppose  $u \in W^{s, \infty}\left((0,1)^d \times\right.$ $[0, T]$ ) is the solution to a linear  PDE \eqref{3}. Then, for every $\varepsilon>0$ there exists a tanh neural network $\widehat{u}^{\varepsilon}=u_{\hat{\theta}^{\varepsilon}}$ with two hidden layers of width at most $\mathcal{O}\left(\varepsilon^{-d /(s-2-\gamma)}\right)$ such that $\mathcal{E}\left(\widehat{\theta}^{\varepsilon}\right) \leq \varepsilon$.
\end{lemma}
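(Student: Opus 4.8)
\emph{Proof idea.} The plan is to reduce the statement to a simultaneous Sobolev-approximation theorem for shallow tanh networks and then propagate the resulting approximation error through each summand of the structure risk $\mathcal{E}$ in \cref{srisk}. The key external input is the now-standard fact (the same construction underlying \cref{coro2}) that, since $u\in W^{s,\infty}\big((0,1)^d\times[0,T]\big)$ with $s\ge 2+\gamma$, for every $\delta>0$ there is a tanh network $\widehat u^{\delta}$ with two hidden layers of width $\mathcal{O}\big(\delta^{-d/(s-2-\gamma)}\big)$ such that $\|u-\widehat u^{\delta}\|_{W^{2,\infty}(\Omega\times[0,T])}\le\delta$; here the slack parameter $\gamma$ absorbs the loss of rate incurred by asking for uniform control of derivatives up to the order the residuals actually see. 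The first step is simply to invoke this theorem, fixing such a $\widehat u^{\delta}$ with $\delta$ to be chosen at the end.

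The second step bounds each term of $\mathcal{E}(\widehat\theta^{\delta})$ using the fact that $u$ is a classical solution of \cref{3}, so all residuals of $u$ vanish: $\mathcal{R}_i[u]=\mathcal{R}_s[u]=\mathcal{R}_t[u]=0$, and, as computed in \cref{A3}, $\mathcal{R}_{Lie}[u]=\tfrac{2be^{-rt}}{\sigma^2 x}\,\mathcal{R}_i[u]=0$. The operators $\mathcal{R}_i,\mathcal{R}_s,\mathcal{R}_t$ of \cref{eqR} are affine in $\widehat u$ and its first and second derivatives, with coefficients $\sigma(x),\mu(x),\upsilon(x)$ that are bounded on the bounded domain; hence pointwise $|\mathcal{R}_i[\widehat u^{\delta}]-\mathcal{R}_i[u]|\le C\,\|\widehat u^{\delta}-u\|_{W^{2,\infty}}\le C\delta$, and likewise (with only lower-order dependence) for $\mathcal{R}_s,\mathcal{R}_t$. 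Integrating the squared residuals over the finite-measure sets $\Omega\times[0,T]$, $\partial\Omega\times[0,T]$ and $\Omega$ gives $\mathcal{L}_{PDE}[\widehat u^{\delta}],\mathcal{L}_{BC}[\widehat u^{\delta}],\mathcal{L}_{IC}[\widehat u^{\delta}]\le C_1\delta^2$. For the Lie term one just quotes the inequality established in \cref{A3}, namely $\mathcal{L}_{Lie}[\widehat u^{\delta}]\le M\,\|\mathcal{R}_{PDE}[\widehat u^{\delta}]\|^2=M\,\mathcal{L}_{PDE}[\widehat u^{\delta}]\le MC_1\delta^2$, so no separate estimate for the conservation residual is needed.

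The final step assembles the pieces: with $\widetilde C:=(\lambda_1+\lambda_2+\lambda_3+\lambda_4 M)C_1$ one gets $\mathcal{E}(\widehat\theta^{\delta})\le\widetilde C\delta^2$. Given the target $\varepsilon>0$, set $\widehat u^{\varepsilon}:=\widehat u^{\delta}$ with $\delta:=\sqrt{\varepsilon/\widetilde C}$; then $\mathcal{E}(\widehat\theta^{\varepsilon})\le\varepsilon$ and the width is $\mathcal{O}\big(\delta^{-d/(s-2-\gamma)}\big)=\mathcal{O}\big(\varepsilon^{-d/(2(s-2-\gamma))}\big)$, which for $\varepsilon<1$ is in particular $\mathcal{O}\big(\varepsilon^{-d/(s-2-\gamma)}\big)$, as claimed. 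The main obstacle is the first step: the simultaneous approximation estimate with the precise width scaling $\mathcal{O}\big(\varepsilon^{-d/(s-2-\gamma)}\big)$ is the only substantive ingredient and is imported essentially verbatim from Sobolev-approximation theory for tanh networks; steps two and three are elementary stability bounds for polynomial-in-derivatives residuals on a bounded domain together with the already-proved comparison $\mathcal{L}_{Lie}\le M\,\mathcal{L}_{PDE}$.
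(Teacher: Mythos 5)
Your proposal is correct and follows essentially the same route as the paper's proof: invoke the tanh Sobolev-approximation theorem of \citet{de2022error} to get $W^{2,\infty}$-closeness with the stated width, use linearity of the PDE operator (and boundedness of its coefficients) to bound the data-fitting residuals, and control the Lie term via the already-established inequality $\mathcal{L}_{Lie}\le M\,\mathcal{L}_{PDE}$. Your version is in fact slightly more careful than the paper's, since you track the multiplicative constants and rescale $\delta=\sqrt{\varepsilon/\widetilde C}$ at the end rather than absorbing them silently.
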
 

\begin{proof}We extend the proof of the Theorem 1 in \citet{de2022error} to the LSN algorithm with regularization terms incorporating Lie symmetries.
 There exists a tanh neural network $\widehat{u}^{\varepsilon}$ with two hidden layers of width at most $\mathcal{O}\left(\varepsilon^{-d /(s-2-\gamma)}\right)$ such that
$$
\left\|u-\widehat{u}^{\varepsilon}\right\|_{W^{2, \infty}\left((0,1)^d \times[0, T]\right)} \leq \varepsilon .
$$
Due to the linearity of PDEs (where \cref{3} is a linear equation with respect to $u$), it immediately follows that $\left|\mathcal{R}_i[u]\right|_{L^2\left((0,1)^d \times[0, T]\right)} \leq \varepsilon$ and $\left|\mathcal{R}_{Lie}[u]\right|_{L^2\left((0,1)^d \times[0, T]\right)}\leq M\left|\mathcal{R}_i[u]\right|_{L^2\left((0,1)^d \times[0, T]\right)} \leq \varepsilon$. By employing a standard trace inequality, one can establish similar bounds for $\mathcal{R}_s[u]$ and $\mathcal{R}_t[u]$. Consequently, it directly follows that $\mathcal{E}\left(\widehat{\theta}^{\varepsilon}\right) \leq \varepsilon$.  
\end{proof} 

This lemma shows that the structure risk of LSN in \cref{jT} can converge to zero. To address the challenge of the curse of dimensionality in structure risk of LSN \cref{jT} bounds, we will leverage Dynkin's  \cref{lem:dy}, which establishes a connection between the linear partial differential  \cref{3} and the Itô diffusion stochastic equation. Next, we will extend the proof for PINNs from \citet{de2022error} to LSNs to demonstrate that the loss for LSNs can be made infinitesimally small.

\begin{lemma} \label{the1}
Let $\alpha, \beta, \varpi, \zeta, T > 0$, and $p > 2$. For any $d \in \mathbb{N}$, define $\Omega_d = [0,1]^d$ and consider $\varphi_d \in C^5\left(\mathbb{R}^d\right)$ with bounded first partial derivatives. Given the probability space $\left(\Omega_d \times [0, T], \mathcal{F}, \mu\right)$, and let $u_d \in C^{2,1}\left(\Omega_d \times [0, T]\right)$ be a function satisfying
$$
\left(\partial_t u_d\right)(x, t)=\mathbf{L}\left[u_d\right](x, t), \quad u_d(x, 0)=\varphi_d(x), \quad \mathcal{L}_{Lie}\left[u_d\right](x, t)=0 \quad  \text { for all }(x, t) \in \Omega_d \times[0, T] .$$
Assume  for every $\xi, \delta, c > 0$, there exist hyperbolic tangent (tanh) neural networks such that
\begin{equation}\label{kl}
    \left\|\varphi_d-\widehat{\varphi}_{\xi, d}\right\|_{C^2\left(D_d\right)} \leq \xi \quad \text { and } \quad\left\|\mathcal{F} \varphi-\widehat{(\mathcal{F} \varphi)_{\delta, d}}\right\|_{C^2\left([-c, c]^d\right)} \leq \delta .
\end{equation}
Under these conditions, there exist constants $C, \lambda>0$ such that for every $\varepsilon>0$ and $d \in \mathbb{N}$,  a constant $\rho_d>0$ (independent of $\varepsilon$ ) and a tanh neural network $\Psi_{\varepsilon, d}$ with at most $C\left(d \rho_d\right)^\lambda \varepsilon^{-\max \{5 p+3,2+p+\beta\}}$ neurons and weights that grow at most as $C\left(d \rho_d\right)^\lambda \varepsilon^{-\max \{\zeta, 8 p+6\}}$ for $\varepsilon \rightarrow 0$ can be found such that
\begin{equation}
\begin{aligned}\label{GrowthBound}
    &\left\|\partial_t \Psi_{\varepsilon, d}-\mathbf{L}\left[\Psi_{\varepsilon, d}\right]\right\|_{L^2\left(\Omega_d \times[0, T]\right)}+\left\|\Psi_{\varepsilon, d}-u_d\right\|_{H^1\left(\Omega_d \times[0, T]\right)}\\
    &+\left\|\Psi_{\varepsilon, d}-u_d\right\|_{L^2\left(\partial\left(\Omega_d \times[0, T]\right)\right)} + 
\left\|\mathcal{L}_{Lie}\left[\Psi_{\varepsilon, d}\right]\right\|_{L^2\left(\Omega_d \times[0, T]\right)}\leq \varepsilon,
\end{aligned}
\end{equation}
where $\rho_d$ is defined as
\begin{equation}
\rho_d:=\max _{x \in \Omega_d} \sup _{\substack{s, t \in[0, T] \\ s<t}} \frac{\left\|X_s^x-X_t^x\right\|_{\mathcal{L}^q\left(P,\|\cdot\|_{\mathbb{R}^d}\right)}}{|s-t|^{\frac{1}{p}}}<\infty.
\end{equation}
In this context, $X^x$ denotes the solution,  following the Itô interpretation, of the stochastic differential equation (SDE) specified by Equation \eqref{eq3}. Here $q>2$ remains  independent of $d$ and the norm $\|\cdot\|_{\mathcal{L}^q\left(P,\|\cdot\|_{\mathbb{R}^d}\right)}$ is  defined as follows: Given a measure space  $(\Omega, \mathcal{F},\mu)$ where $q>0$, for any $\mathcal{F}/\mathcal{B}(\mathbb{R}^d)$-measurable function $f:\Omega\rightarrow\mathbb{R}^d$,  
\begin{equation} \|f\|_{\mathcal{L}^q(\mu,\|\cdot\|_{\mathbb{R}^d})}:= 
\left[\int_\Omega \|f(\omega)\|^q
_{\mathbb{R}^d}\mu (d\omega)\right]^{\frac{1}{q}}.
\end{equation}
\end{lemma}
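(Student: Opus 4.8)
The plan is to reduce the statement to the already-known error bound for PINNs approximating the linear parabolic problem $\left(\partial_t u_d\right)(x,t)=\mathbf{L}[u_d](x,t)$, $u_d(x,0)=\varphi_d(x)$, and then to control the extra Lie-symmetry term $\left\|\mathcal{L}_{Lie}[\Psi_{\varepsilon,d}]\right\|_{L^2(\Omega_d\times[0,T])}$ by the PDE residual. The crucial algebraic input is the identity derived in \cref{A3}, namely $\mathcal{R}_{Lie}[\hat u](x,t)=\frac{2be^{-rt}}{\sigma^2 x}\bigl(\hat u_t+\tfrac12\sigma^2 x^2\hat u_{xx}+rx\hat u_x-r\hat u\bigr)=\frac{2be^{-rt}}{\sigma^2 x}\mathcal{R}_i[\hat u](x,t)$, together with the uniform bound $\left|\frac{2be^{-rt}}{\sigma^2 x}\right|^2\le M$ on the bounded domain (with $b=x_{\min}$). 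Consequently $\left\|\mathcal{L}_{Lie}[\Psi_{\varepsilon,d}]\right\|_{L^2}^2\le M\,\left\|\partial_t\Psi_{\varepsilon,d}-\mathbf{L}[\Psi_{\varepsilon,d}]\right\|_{L^2}^2$, so any network that drives the PDE residual below a threshold automatically drives the Lie residual below a comparable threshold. This means the fourth summand in \eqref{GrowthBound} is subsumed (up to the constant $\sqrt{M}$) by the first, and the left-hand side of \eqref{GrowthBound} is bounded by $(1+\sqrt{M})$ times the corresponding three-term PINN quantity.

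Concretely, I would proceed as follows. First, invoke the Dynkin-formula representation from \cref{lem:dy}, which expresses $u_d$ via the Itô diffusion $X^x_t$ and identifies $\mathcal{F}\varphi$; the Hölder-continuity modulus $\rho_d$ of $t\mapsto X^x_t$ is finite by the stated hypothesis. Second, apply the construction of \citet{de2022error} (their Theorem on error bounds for PINNs on Kolmogorov-type equations) verbatim: under the approximation hypotheses \eqref{kl} on $\varphi_d$ and $\mathcal{F}\varphi$, there is a tanh network $\Psi_{\varepsilon,d}$ with at most $C(d\rho_d)^\lambda\varepsilon^{-\max\{5p+3,\,2+p+\beta\}}$ neurons and weights growing at most as $C(d\rho_d)^\lambda\varepsilon^{-\max\{\zeta,\,8p+6\}}$ such that $\left\|\partial_t\Psi_{\varepsilon,d}-\mathbf{L}[\Psi_{\varepsilon,d}]\right\|_{L^2(\Omega_d\times[0,T])}+\left\|\Psi_{\varepsilon,d}-u_d\right\|_{H^1(\Omega_d\times[0,T])}+\left\|\Psi_{\varepsilon,d}-u_d\right\|_{L^2(\partial(\Omega_d\times[0,T]))}\le \varepsilon/(1+\sqrt{M})$. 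Third, append the Lie term: by the identity and uniform bound above, $\left\|\mathcal{L}_{Lie}[\Psi_{\varepsilon,d}]\right\|_{L^2}\le\sqrt{M}\,\left\|\partial_t\Psi_{\varepsilon,d}-\mathbf{L}[\Psi_{\varepsilon,d}]\right\|_{L^2}\le\sqrt{M}\,\varepsilon/(1+\sqrt{M})$, and summing the four terms yields the bound $\varepsilon$ in \eqref{GrowthBound}. The rescaling $\varepsilon\mapsto\varepsilon/(1+\sqrt{M})$ only changes the constant $C$, leaving the stated complexity exponents intact.

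The main obstacle — and the only place requiring genuine care rather than bookkeeping — is verifying that the boundedness constant $M$ for the multiplier $\frac{2be^{-rt}}{\sigma^2 x}$ is uniform in the problem data in the way the statement needs, and in particular that dividing by $x$ is harmless. This rests on the collocation domain being bounded away from $x=0$ (the choice $b=x_{\min}>0$ and the restriction to the interior region $(x,t)\in\Omega\times[0,T]$), exactly as in \cref{A3}; I would make this standing assumption explicit. A secondary, routine point is the trace/boundary estimate: one must check that $\Psi_{\varepsilon,d}-u_d$ restricted to $\partial(\Omega_d\times[0,T])$ is controlled, which follows from the $H^1$ bound via a standard trace inequality and is already handled inside the \citet{de2022error} argument. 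Everything else is a direct transcription of the PINN approximation theorem with one extra, strictly dominated, regularization term.
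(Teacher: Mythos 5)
Your proposal matches the paper's own proof: both invoke Theorem 2 of \citet{de2022error} for the three PINN terms and then absorb the extra Lie term via the pointwise identity $\mathcal{R}_{Lie}[\hat u]=\frac{2be^{-rt}}{\sigma^2 x}\mathcal{R}_i[\hat u]$ and the uniform bound by $M$ from \cref{A3}. If anything, you are more careful than the paper — you make the $\varepsilon/(1+\sqrt{M})$ rescaling and the distinction between the constant $M$ for squared norms versus $\sqrt{M}$ for the $L^2$ norms explicit, and you flag the standing assumption that the domain is bounded away from $x=0$, all of which the paper leaves implicit.
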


\begin{proof}
The main proof follows directly from Theorem 2 in \citet{de2022error}, where
\begin{equation}
\begin{aligned}
    \left\|\mathcal{L}_{Lie}\left[\Psi_{\varepsilon, d}\right]\right\|_{L^2\left(\Omega_d \times[0, T]\right)}\leq M \left\|\partial_t \Psi_{\varepsilon, d}-\mathbf{L}\left[\Psi_{\varepsilon, d}\right]\right\|_{L^2\left(\Omega_d \times[0, T]\right)}\leq \varepsilon.
\end{aligned}   
\end{equation}

\end{proof}
 According to Remark 2 by  \citet{de2022error},  it is indicated that the assumption conditions in the  \cref{the1} are easily satisfied after modifications for the BS equation.

 \begin{theorem}\label{the4}
     Let $u$ be a classical solution to linear PDE as described in  \cref{3} with $\mu \in C^1\left(\Omega ; \mathbb{R}^d\right)$ and $\sigma \in C^2\left(\Omega ; \mathbb{R}^{d \times d}\right)$, let $M=\left(\frac{2e^{-rT}}{\sigma^2 }\right)^2 $, $v \in C^2(\Omega \times[0, T] ; \mathbb{R})$, and define the residuals according \cref{eqR}. Then,
\begin{equation}
\begin{aligned}
\|u-v\|_{L^2(\Omega \times[0, T])}^2 \leq & C_1\left[\left\|\mathcal{R}_i[v]\right\|_{L^2(\Omega \times[0, T])}^2+\left\|\mathcal{R}_{lie}[v]\right\|_{L^2(\Omega \times[0, T])}^2+\left\|\mathcal{R}_t[v]\right\|_{L^2(\Omega)}^2\right. \\
& \left.+C_2\left\|\mathcal{R}_s[v]\right\|_{L^2(\partial \Omega \times[0, T])}+C_3\left\|\mathcal{R}_s[v]\right\|_{L^2(\partial \Omega \times[0, T])}^2\right],
\end{aligned}
\end{equation}
where 
\begin{equation}
    \begin{aligned}
        C_0&=2\sum_{i, j=1}^d\left\|\partial_{i j}\left(\sigma \sigma^T\right)_{i j}\right\|_{L^{\infty}(\Omega \times[0, T])},\\
        C_1&=T e^{\left(2C_0+2\|\operatorname{div} \mu\|_{\infty}+1+\frac{1}{M}+2\|\upsilon\|_\infty\right) T},\\
    C_2&=2\sum_{i=1}^d\left\|\left(\sigma \sigma^T \nabla_x[u-v]\right)_i\right\|_{L^2(\partial \Omega \times[0, T])},\\
C_3&=2\|\mu\|_{\infty}+(1+M)\sum_{i, j, k=1}^d\left\|\partial_i\left(\sigma_{i k} \sigma_{j k}\right)\right\|_{L^{\infty}(\partial \Omega\times[0, T])}.\\
    \end{aligned}
\end{equation}
 \end{theorem}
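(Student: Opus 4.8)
The plan is to adapt the standard PINN error estimate for linear parabolic PDEs (Theorem 3 in \citet{de2022error}) to accommodate the extra Lie conservation residual term. The starting point is to write $w := u - v$ and observe that, since $u$ is a classical solution, $w$ satisfies the inhomogeneous equation $\partial_t w - \mathbf{L}[w] = -\mathcal{R}_i[v]$ on $\Omega\times[0,T]$, with $w(x,0) = -\mathcal{R}_t[v](x)$ on $\Omega$ and $w(y,t) = -\mathcal{R}_s[v](y,t)$ on $\partial\Omega\times[0,T]$. The central device is an energy (a priori) estimate: multiply the equation by $w$, integrate over $\Omega$, and use the divergence structure of $\mathbf{L}[u] = \tfrac12\sum_{i,j}(\sigma\sigma^T)_{ij}\partial_{ij}w + \sum_i\mu_i\partial_i w + \upsilon w$ to integrate by parts. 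This produces $\tfrac{d}{dt}\tfrac12\|w(\cdot,t)\|_{L^2(\Omega)}^2$ on the left and, on the right, a bulk term controlled by $\|\mathcal{R}_i[v]\|_{L^2}^2$, a Grönwall-type term $\propto\|w(\cdot,t)\|_{L^2(\Omega)}^2$ with constant built from $C_0$ (the second derivatives of $\sigma\sigma^T$), $\|\operatorname{div}\mu\|_\infty$ and $\|\upsilon\|_\infty$, plus boundary contributions on $\partial\Omega\times[0,T]$ that give rise to the $C_2\|\mathcal{R}_s[v]\|_{L^2}$ and $C_3\|\mathcal{R}_s[v]\|_{L^2}^2$ terms (the linear-in-norm term $C_2$ arises from the cross term $\int_{\partial\Omega}(\sigma\sigma^T\nabla_x w)\cdot n\, w$, while $C_3$ collects the first-derivative-of-$\sigma\sigma^T$ and the $\mu$ boundary flux).

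The only genuinely new ingredient is the $\mathcal{R}_{lie}[v]$ term, and here I would invoke the key identity already established in \cref{A3}: for the operator $G_2$ one has $\mathcal{R}_{Lie}[v] = \frac{2be^{-rt}}{\sigma^2 x}\,\mathcal{R}_i[v]$, hence $\|\mathcal{R}_i[v]\|_{L^2}^2 \le \frac{1}{M}\|\mathcal{R}_{Lie}[v]\|_{L^2}^2$ is \emph{not} what we want — rather, the bulk forcing $\|\mathcal{R}_i[v]\|_{L^2}^2$ that appears naturally in the energy estimate can be \emph{supplemented} by the Lie term at no cost, since both are present on the right-hand side of the claimed bound. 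Concretely, wherever the Grönwall argument would absorb a copy of $\|\mathcal{R}_i[v]\|_{L^2(\Omega\times[0,t])}^2$ into the exponential, I instead carry along an extra $\frac{1}{M}\|\mathcal{R}_{Lie}[v]\|_{L^2}^2$; the factor $\frac1M$ in the exponent of $C_1$ is precisely the bookkeeping constant from the relation $\mathcal{R}_{Lie} = (\text{bounded factor})\cdot\mathcal{R}_i$ with $|\text{factor}|^2 < M$. So the Lie residual is handled by comparison with the PDE residual through the explicit multiplier, and it enters the final constant $C_1$ through the additive $\tfrac1M$ in the exponent.

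After the differential inequality
\[
\frac{d}{dt}\tfrac12\|w(\cdot,t)\|_{L^2(\Omega)}^2 \;\le\; \Big(C_0 + \|\operatorname{div}\mu\|_\infty + \tfrac12 + \tfrac{1}{2M} + \|\upsilon\|_\infty\Big)\|w(\cdot,t)\|_{L^2(\Omega)}^2 + (\text{residual terms})
\]
is in place, I would integrate in $t$ from $0$ to $T$, use $w(\cdot,0) = -\mathcal{R}_t[v]$ to produce the $\|\mathcal{R}_t[v]\|_{L^2(\Omega)}^2$ contribution, and apply the integral form of Grönwall's inequality to pull out the factor $C_1 = T\exp\big((2C_0 + 2\|\operatorname{div}\mu\|_\infty + 1 + \tfrac1M + 2\|\upsilon\|_\infty)T\big)$, matching the stated constant. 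The boundary terms are reorganised exactly as in \citet{de2022error} into the $C_2$ and $C_3$ pieces, with $C_3$ picking up the extra $(1+M)$ weight because the Lie term also contributes a boundary flux of the same type when one integrates $D_xT^x_2$ by parts. The main obstacle I anticipate is bookkeeping rather than conceptual: getting all the boundary integrals to collapse into exactly the two forms $C_2\|\mathcal{R}_s[v]\|_{L^2}$ and $C_3\|\mathcal{R}_s[v]\|_{L^2}^2$ requires careful tracking of the $\nabla_x w$ boundary trace (which is why $C_2$ itself is defined as a norm of $\sigma\sigma^T\nabla_x[u-v]$, i.e.\ it is left implicit rather than fully estimated) and correctly splitting the $x$-flux of the conserved vector; I would follow the \citet{de2022error} template line by line and only modify the constants to insert the $M$- and $\tfrac1M$-dependent terms.
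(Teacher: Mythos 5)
Your proposal follows essentially the same route as the paper: an energy estimate obtained by multiplying the error equation by $w=u-v$, integrating by parts to isolate the $C_0$, $\operatorname{div}\mu$, $\upsilon$ and boundary contributions, handling the Lie residual through the pointwise identity $\mathcal{R}_{Lie}=\frac{2be^{-rt}}{\sigma^2x}\mathcal{R}_i$ with the bound $M$ on the multiplier, and closing with Gr\"onwall over $[0,T]$ exactly as in the \citet{de2022error} template. The placement of the $\tfrac1M$ term in the exponent of $C_1$ and the reading of $C_2$ as an implicit boundary-trace norm both match the paper's argument, so no substantive gap or divergence to report.
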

\begin{proof}
    
 Let $\hat{u}=v-u$. Integrating $\mathcal{R}_i[\hat{u}](t, x)$ over $\Omega$ and rearranging terms gives
\begin{equation}\label{pp}
         \frac{d}{d t} \int_\Omega|\hat{u}|^2dx=\frac{1}{2} \int_\Omega  \operatorname{Trace}\left(\sigma^2 H_x[\hat{u}] \right)\hat{u}dx+\int_\Omega \mu J_x[\hat{u}] \hat{u}dx+\int_\Omega\upsilon |\hat{u}|^2dx+\int_\Omega \mathcal{R}_i[\hat{u}] \hat{u}dx,
\end{equation}
where all integrals are understood as integrals with respect to the Lebesgue measure on $\Omega$ and $\partial \Omega$, and where $J_x$ represents the Jacobian matrix, which is the transpose of the gradient with respect to the spatial coordinates. Following the derivation by Theorem 4 of  \citet{de2022error}, we can similarly show that :\\
 for the first term 
\begin{equation}
\begin{array}{l}
\int_\Omega \operatorname{Trace}\left(\sigma \sigma^T H_x[\hat{u}]\right) \hat{u}dx \\
\leq \sum_{i=1}^d \int_{\partial \Omega}\left|\left(\sigma \sigma^T J_x(\hat{u})^T\right)_i \hat{u}\left(\hat{e}_i \cdot \hat{n}\right)\right|dx-\underbrace{\int_\Omega J_x[\hat{u}] \sigma\left(J_x[\hat{u}] \sigma\right)^Tdx}_{\geq 0}+\frac{c_2}{2} \int_{\partial \Omega}\left|\mathcal{R}_s[v]\right|^2dx+\frac{c_3}{2} \int_\Omega \hat{u}^2 dx,
\end{array}
\end{equation}
for the second term 
\begin{equation}
\begin{aligned}
\int_\Omega \mu J_x[\hat{u}] \hat{u}dx &  \leq \frac{1}{2}\|\operatorname{div} \mu\|_{\infty} \int_\Omega \hat{u}^2dx+\frac{1}{2}\|\mu\|_{\infty} \int_{\partial \Omega}\left|\mathcal{R}_s[v]\right|^2dx,
\end{aligned}
\end{equation}
for the fourth term 
\begin{equation}
\int_\Omega \mathcal{R}_i[\hat{u}] \hat{u}dx \leq \frac{1}{2} \int_\Omega \mathcal{R}_i[\hat{u}]^2dx+\frac{1}{2} \int_\Omega \hat{u}^2 dx,
\end{equation}
  where $\hat{n}$ denotes the unit normal on $\partial \Omega$. $1 \leq i, j, k \leq d$ and
\begin{equation}
\begin{array}{l}
c_1=2 \sum_{i=1}^d\left\|\left(\sigma \sigma^T J_x[\hat{u}]^T\right)_i\right\|_{L^2(\partial \Omega\times[0, T])}, \\
c_2=\sum_{i, j, k=1}^d\left\|\partial_i\left(\sigma_{i k} \sigma_{j k}\right)\right\|_{L^{\infty}(\partial \Omega \times[0, T])}, \\
c_3=\sum_{i, j=1}^d\left\|\partial_{i j}\left(\sigma \sigma^T\right)_{i j}\right\|_{L^{\infty}(\Omega \times[0, T])} .
\end{array}
\end{equation}
As for the third term of \cref{pp}, we obtain 
\begin{equation}
    \int_\Omega\upsilon |\hat{u}|^2dx \leq  \|\upsilon\|_\infty  \int_\Omega |\hat{u}|^2dx.
\end{equation}
Integrating \cref{pp} over the interval $[0, \tau] \subset[0, T]$, using all the previous inequalities together with Hölder's inequality, we find that
\begin{equation}\label{das}
    \begin{aligned}
\int_\Omega& |\hat{u}(x, \tau)|^2 d x \leq \int_\Omega\left|\mathcal{R}_t[v]\right|^2dx+c_1\left(\int_{\partial \Omega \times[0, T]}\left|\mathcal{R}_s[v]\right|^2dxdt\right)^{1 / 2}+\int_{\Omega \times[0, T]}\left|\mathcal{R}_i[\hat{u}]\right|^2dx dt \\&+\left(c_2+\|\mu\|_{\infty}\right) \int_{\partial \Omega \times[0, T]}\left|\mathcal{R}_s[v]\right|^2dx dt+\left(c_3+\|\operatorname{div} \mu\|_{\infty}+1+\|\upsilon\|_\infty\right) \int_{[0, \tau]} \int_\Omega|\hat{u}(x, s)|^2 d x d t .
\end{aligned}
\end{equation}
Referring to  \cref{pp}, we can transform operator $ \mathcal{R}_{Lie}[\hat{u}]= \frac{2be^{-rt}}{\sigma^2x}\left(u_t+\frac{1}{2}\sigma^2x^2u_{xx}+rxu_x-ru\right)$ with $ \frac{2be^{-rt}}{\sigma^2x}\not = 0$, i.e., $  \mathcal{R}_{i}[\hat{u}]=u_t+\frac{1}{2}\sigma^2x^2u_{xx}+rxu_x-ru=\frac{\sigma^2x}{2be^{-rt}}\mathcal{R}_{Lie}[\hat{u}]$ into the following form,
\begin{equation}
\begin{aligned}
         \frac{d}{d t} \int_\Omega|\hat{u}|^2dx&=\frac{1}{2} \int_\Omega  \operatorname{Trace}\left(\sigma^2 H_x[\hat{u}] \right)\hat{u}dx+\int_\Omega \mu J_x[\hat{u}] \hat{u}dx+\int_\Omega\upsilon |\hat{u}|^2dx+\int_\Omega \frac{\sigma^2x}{2be^{-rt}}\mathcal{R}_{Lie}[\hat{u}]dx\\
         &\leq \frac{1}{2} \int_\Omega  \operatorname{Trace}\left(\sigma^2 H_x[\hat{u}] \right)\hat{u}dx+\int_\Omega \mu J_x[\hat{u}] \hat{u}dx+\int_\Omega\upsilon |\hat{u}|^2dx+\frac{1}{M}\int_\Omega \mathcal{R}_{Lie}[\hat{u}]dx.
\end{aligned}
\end{equation}
The proof is ultimately established by using Using Grönwall's inequality and integrating over $[0, T]$.

\end{proof}

\begin{remark}
 \cref{the4} states that by optimizing structure risk , the network's output can approximate the exact solution, while  \cref{the1} confirms that structure risk can be minimized. This verifies the numerical approximation of the LSN to the exact solution.
\end{remark}
\subsubsection{Generalisation Error Bounds of LSN}
We set a  general configuration 
let $\Omega \subset \mathbb{R}^d$ be compact and let $u: \Omega \rightarrow \mathbb{R}, u_\theta: \Omega \rightarrow \mathbb{R}$ be functions for all $\theta \in \Theta$. We consider $u$ as the exact value of the PDE \eqref{3}, and $u_\theta$ as the approximation generated by LSN with weights $\theta$. 
 

Let $N \in \mathbb{N}$ be the training set size and let $\mathcal{S}=\left\{z_1, \ldots, z_M\right\} \in \Omega^N$ be the training set, where each $z_i$ is independently drawn according to some probability measure $\mu$ on $\Omega$. We define the  structure risk and empirical loss   as
\begin{equation}
\begin{aligned}
 \mathcal{L}(\theta)=\int_\Omega\left|u_\theta(z)-u(z)\right|^2 d \mu(z), \quad \hat{\mathcal{L}}(\theta, \mathcal{S})& =\frac{1}{N} \sum_{i=1}^N\left|u_\theta\left(z_i\right)-u\left(z_i\right)\right|^2, \quad
\theta^*(\mathcal{S})\in \arg \min_{\theta \in \Theta} \,\hat{\mathcal{L}}(\theta, \mathcal{S}).
\end{aligned}
\end{equation}

\begin{lemma}\label{lem:16}
Let $d, L, W \in \mathbb{N}$ with $R \geq 1$, and define $M=\left(\frac{2e^{-rT}}{\sigma^2 }\right)^2$. Consider  $u_\theta:[0,1]^d \rightarrow \mathbb{R},$ where $\theta \in \Theta$ representing tanh neural networks with at most $L-1$ hidden layers, each with a width of at most $W$, and weights and biases bounded by $R$. Let $\mathcal{L}^q$ and $\hat{\mathcal{L}}^q$ denote the structure risk and empirical error, respectively, for linear general PDEs as in  \cref{3}.
Assume  $\max \left\{\|\varphi\|_{\infty},\|\psi\|_{\infty}\right\} \leq \max _{\theta \in \Theta}\left\|u_\theta\right\|_{\infty}$. Denote by  $\mathfrak{L}^q$  the Lipschitz constant of $\mathcal{L}^q$, for $q=i, t, s$. Then, it follows that
$$
\mathfrak{L}^q \leq 2^{5+2 L}C(d+7)^2L^4 R^{6 L-1} W^{6 L-6},
$$
where $C =  (1+M)\max\limits_{x \in D}\left(1+\sum\limits_{i=1}^d\left|\mu(x)_i\right|+\sum\limits_{i, j=1}^d\left|\left(\sigma(x) \sigma(x)^*\right)_{i j}\right|\right)^2.$
\end{lemma}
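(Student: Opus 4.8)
The plan is to bound the Lipschitz constant $\mathfrak{L}^q$ of each risk functional $\mathcal{L}^q$ by reducing the problem to Lipschitz bounds on the underlying residual maps $\mathcal{R}_q[u_\theta]$ as functions of the parameters $\theta$, and then exploiting the extra factor $(1+M)$ that appears when one adds the Lie symmetry term. Recall from the theoretical analysis in \cref{A3} that $\mathcal{R}_{Lie}[u_\theta] = \frac{2be^{-rt}}{\sigma^2 x}\mathcal{R}_i[u_\theta]$, so $\|\mathcal{R}_{Lie}[u_\theta]\|^2 \le M\|\mathcal{R}_i[u_\theta]\|^2$; hence the composite interior risk $\mathcal{L}^i + \mathcal{L}_{Lie}$ inherits the Lipschitz constant of $\mathcal{L}^i$ up to the multiplicative factor $(1+M)$, which is exactly where the $(1+M)$ in the constant $C$ comes from. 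So the first step is to observe that it suffices to estimate $\mathfrak{L}$ for each base residual $\mathcal{R}_q$ individually.

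The core estimate is the standard one from \citet{de2022error}: for tanh networks with $L-1$ hidden layers, width $\le W$, weights bounded by $R$, one controls the sup-norms of $u_\theta$, $\nabla_x u_\theta$, and $\nabla^2_x u_\theta$, together with the sup-norms of their $\theta$-derivatives, by quantities of the form $2^{O(L)} L^{O(1)} R^{O(L)} W^{O(L)}$. The second step is to carry these bounds through the definition of $\mathcal{L}^q(\theta) = \int |\mathcal{R}_q[u_\theta]|^2 d\mu$: writing $\mathcal{L}^q(\theta_1) - \mathcal{L}^q(\theta_2) = \int (\mathcal{R}_q[u_{\theta_1}] + \mathcal{R}_q[u_{\theta_2}])(\mathcal{R}_q[u_{\theta_1}] - \mathcal{R}_q[u_{\theta_2}]) d\mu$, one factor supplies a uniform bound on $|\mathcal{R}_q|$ and the other supplies the Lipschitz-in-$\theta$ increment. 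For $q=i$ the residual is $\partial_t u_\theta - \mathbf{L}[u_\theta]$, so the coefficients $\mu(x)$, $\sigma(x)\sigma(x)^*$ enter linearly; this is precisely why $C$ contains the factor $\bigl(1+\sum_i|\mu(x)_i| + \sum_{i,j}|(\sigma\sigma^*)_{ij}|\bigr)^2$ (squared, because both factors in the product carry one copy). For $q=s,t$ the residuals are $u_\theta - \psi$ and $u_\theta - \varphi$, and the hypothesis $\max\{\|\varphi\|_\infty,\|\psi\|_\infty\} \le \max_\theta\|u_\theta\|_\infty$ lets one absorb the boundary/initial data into the same sup-norm bound on $u_\theta$, so these cases are dominated by the interior case.

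The third step is bookkeeping of the exponents. Each of $u_\theta$, its spatial derivatives up to order two, and its $\theta$-gradient contributes a power of $R$ roughly $L$ and a power of $W$ roughly $L$; multiplying the "value" factor and the "increment" factor gives the $R^{6L-1}$ and $W^{6L-6}$ in the statement, and composing $L$ layers gives the $2^{5+2L}$ and $L^4$ prefactors. The dimension dependence $(d+7)^2$ tracks the number of second-order derivative terms $\sum_{i,j}$ appearing in $\mathbf{L}$ plus lower-order and constant terms. Combining this with the $(1+M)$ factor from the Lie term and taking the max over $q$ yields the claimed bound with $C = (1+M)\max_x\bigl(1+\sum_i|\mu(x)_i| + \sum_{i,j}|(\sigma\sigma^*)_{ij}|\bigr)^2$.

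The main obstacle is keeping the chain-rule bookkeeping honest: the residual $\mathcal{R}_i$ involves second spatial derivatives of $u_\theta$, and differentiating $\partial^2_x u_\theta$ with respect to $\theta$ produces a sum over layers of products of weight matrices and derivatives of $\tanh$, so one must verify that the worst-case growth is indeed $R^{O(L)}W^{O(L)}$ with the specific constants $6L-1$ and $6L-6$ rather than something larger. I would handle this by invoking the corresponding derivative-bound lemmas of \citet{de2022error} verbatim (their Lemma on $\|u_\theta\|_{C^2}$ and its $\theta$-Lipschitz analogue), so that the only genuinely new content here is the insertion of the $(1+M)$ factor coming from the Lie conservation residual — which, as noted, is immediate from the identity $\mathcal{R}_{Lie}[u_\theta] = \frac{2be^{-rt}}{\sigma^2 x}\mathcal{R}_i[u_\theta]$ established earlier in \cref{A3}.
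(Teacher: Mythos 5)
Your proposal matches the paper's proof essentially step for step: both reduce the Lipschitz constant of $\mathcal{L}^q$ to the product of a uniform sup-norm bound and a $\theta$-Lipschitz bound on the residuals via the factorisation $a^2-b^2=(a+b)(a-b)$, import those two residual bounds verbatim from \citet{de2022error} (giving the $R^{3L}\cdot R^{3L-1}$ and $W^{3L-3}\cdot W^{3L-3}$ exponents), and obtain the $(1+M)$ factor from the identity $\mathcal{R}_{Lie}[u_\theta]=\frac{2be^{-rt}}{\sigma^2 x}\mathcal{R}_i[u_\theta]$. No meaningful difference in route or content.
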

\begin{proof}
Similar to Lemma 16 in \citet{de2022error}, we have the following:
\begin{equation}\nonumber
\begin{aligned}
\left|\mathcal{R}_i\left[u_\theta\right](t, x)-\mathcal{R}_i\left[\Phi^{\vartheta}\right](t, x)\right| & \leq \left|\upsilon(x)\right|_{1}\left|u_\theta - \Psi^v\right|_\infty+
\left(1+|\mu(x)|_1\right)\left|J^\theta-J^{\vartheta}\right|_{\infty}  +\left|\sigma(x) \sigma(x)^*\right|_1\left|H_x^\theta-H_x^{\vartheta}\right|_{\infty} \\
& \leq 4 \alpha\left(1+|\upsilon(x)|_1+|\mu(x)|_1\right.  \left.+\left|\sigma(x) \sigma(x)^*\right|_1\right)(d+7) L^2 R^{3 L-1} W^{3 L-3} 2^L|\theta-\vartheta|_{\infty}.
\end{aligned}
\end{equation}
And we have
\begin{equation}
\begin{aligned}
\left|\mathcal{R}_{lie}\left[u_\theta\right](t, x)-\mathcal{R}_{lie}\left[\Phi^{\vartheta}\right](t, x)\right| 
& \leq  M\left|\mathcal{R}_{i}\left[u_\theta\right](t, x)-\mathcal{R}_{i}\left[\Phi^{\vartheta}\right](t, x)\right|,
\end{aligned}
\end{equation}
where we let $|\cdot|_p$ denote the vector $p$-norm of the vectored version of a general tensor. Next, we  set $\vartheta=0$ ) and $\max \left\{\|\varphi\|_{\infty},\|\psi\|_{\infty}\right\} \leq \max _{\theta \in \Theta}\left\|u_\theta\right\|_{\infty}$ for $q=t, s$ that
\begin{equation}
\begin{aligned}
\max _\theta\left\|\mathcal{R}_i\left[u_\theta\right]\right\|_{\infty} \leq & 4 \alpha C_1(d+7) 2^L L^2 R^{3 L} W^{3 L-3}, \\
\max _\theta\left\|\mathcal{R}_{lie}\left[u_\theta\right]\right\|_{\infty} \leq & 4 \alpha C_1M(d+7) 2^L L^2 R^{3 L} W^{3 L-3}, \\
\max _\theta\left\|\mathcal{R}_q\left[u_\theta\right]\right\|_{\infty}  \leq &2 W R,
\end{aligned}
\end{equation}
where $C_1=\max _{x \in \Omega}\left(1+|\upsilon(x)|_1+|\mu(x)|_1+\left|\sigma(x) \sigma(x)^*\right|_1\right)$. Combining all the previous results yields the bound.
\end{proof}
We can then obtain the generalization bound of LSN as follows.
\begin{theorem}\label{genere}
Let $L, W, N\in \mathbb{N}$, $ R\geq 1$, $L,W\geq 2$, $a,b\in\mathbb{R}$ with $a<b$ and let $u_\theta:[0,1]^d\rightarrow\mathbb{R}$, $\theta \in \Theta$, be tanh neural networks with at most $L-1$ hidden layers, width at most $W$, and weights and biases bounded by $R$. For $q = i,t,s$, let $\mathcal{L}^q$ and $\hat{\mathcal{L}}^q$ denote the LSN structure risk and training error, respectively, for linear general PDEs as in \cref{3}. 
Let $c_q>0$ be such that $\hat{\mathcal{L}}^q(\theta,\mathcal{S})$, $\mathcal{L}^q(\theta)\in [0,c_q]$, for all $\theta \in \Theta $ and $S\subset \Omega^N$. Assume   $\max\{\|\varphi\|_\infty,\|\psi\|_\infty\}\leq \max_{\theta\in\Theta}\|u_\theta\|_\infty$ and define the constants
\begin{equation}\nonumber
C = (1+M) \max\limits_{x \in \Omega}\left(1+\sum\limits_{i=1}^d\left|\upsilon(x)_i\right|+\sum\limits_{i=1}^d\left|\mu(x)_i\right|+\sum\limits_{i, j=1}^d\left|\left(\sigma(x) \sigma(x)^*\right)_{i j}\right|\right)^2.
\end{equation}
Then, for any $\epsilon>0$, it holds that 
\begin{equation}
{\mathcal{L}}^q\leq \epsilon + \hat{\mathcal{L}}^q \quad \text{if} \, M_q\geq \frac{24dL^2W^2c_q^2}{\epsilon^4}\ln \left(4c_1 R W \sqrt[6]{\frac{C(d+7)}{\epsilon^2}}\right).
\end{equation}
\end{theorem}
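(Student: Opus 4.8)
The plan is to run the covering-number argument behind \cref{coro2} (from \citet{de2022error}), with the Lie conservation residual folded into the interior-loss component. The only genuinely new ingredient — a Lipschitz estimate for the LSN loss in the network weights — is already in hand via \cref{lem:16}: the computation of \cref{A3} shows that $\mathcal{R}_{Lie}[u_\theta]=\tfrac{2be^{-rt}}{\sigma^2 x}\,\mathcal{R}_i[u_\theta]$ with $\big|\tfrac{2be^{-rt}}{\sigma^2 x}\big|^2\le M$, so the Lie residual is a uniformly bounded multiple of the PDE residual, and that is exactly why the constant $C$ of \cref{lem:16} carries the factor $1+M$ and why each component loss $\mathcal{L}^q$, $q\in\{i,t,s\}$ (with $q=i$ understood to bundle $\mathcal{L}_{PDE}$ and $\mathcal{L}_{Lie}$, both sampled on the interior collocation set), is $\mathfrak{L}^q$-Lipschitz in $\theta$ with $\mathfrak{L}^q\le 2^{5+2L}C(d+7)^2L^4R^{6L-1}W^{6L-6}$. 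Combined with the hypothesis $\mathcal{L}^q(\theta),\hat{\mathcal{L}}^q(\theta,\mathcal{S})\in[0,c_q]$, these are the only structural facts the remainder of the proof needs.

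First I would cover the weight space. A tanh network with at most $L-1$ hidden layers and width at most $W$ is parametrised by $\theta$ in a box $[-R,R]^{n_\theta}$ with $n_\theta=\mathcal{O}(dLW^2)$; for a resolution $\delta>0$ choose a minimal $\ell^\infty$ $\delta$-net $\Theta_\delta$, so that $\log|\Theta_\delta|\le n_\theta\log(R/\delta+1)$. Lipschitzness of $\mathcal{L}^q$ and of $\hat{\mathcal{L}}^q(\cdot,\mathcal{S})$ gives, for every $\theta$, a net point $\theta'$ with $|\mathcal{L}^q(\theta)-\mathcal{L}^q(\theta')|\le\mathfrak{L}^q\delta$ and $|\hat{\mathcal{L}}^q(\theta,\mathcal{S})-\hat{\mathcal{L}}^q(\theta',\mathcal{S})|\le\mathfrak{L}^q\delta$. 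Next, for each fixed $\theta'\in\Theta_\delta$ the empirical loss $\hat{\mathcal{L}}^q(\theta',\mathcal{S})$ is an average of $M_q$ i.i.d.\ $[0,c_q]$-valued random variables with mean $\mathcal{L}^q(\theta')$, so Hoeffding's inequality together with a union bound over $\Theta_\delta$ yield
\begin{equation}\nonumber
\mathbb{P}\Big(\exists\,\theta'\in\Theta_\delta:\ \mathcal{L}^q(\theta')-\hat{\mathcal{L}}^q(\theta',\mathcal{S})>t\Big)\le|\Theta_\delta|\exp\big(-2M_qt^2/c_q^2\big).
\end{equation}
On the complementary event, every $\theta$ — in particular the empirical minimiser $\theta^*(\mathcal{S})$ — satisfies $\mathcal{L}^q(\theta)\le\hat{\mathcal{L}}^q(\theta,\mathcal{S})+t+2\mathfrak{L}^q\delta$ by routing through its nearest net point.

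It then remains to calibrate. Picking $\delta$ of order $\epsilon/\mathfrak{L}^q$ and $t$ of order $\epsilon$ so that $t+2\mathfrak{L}^q\delta\le\epsilon$ converts the last inequality into the target $\mathcal{L}^q\le\epsilon+\hat{\mathcal{L}}^q$; it suffices that the right-hand side of the displayed bound be $<1$. Substituting $n_\theta=\mathcal{O}(dLW^2)$, $\delta\sim\epsilon/\mathfrak{L}^q$, and the explicit $\mathfrak{L}^q$ of \cref{lem:16} into $\log|\Theta_\delta|\le n_\theta\log(R/\delta+1)$ and solving $|\Theta_\delta|\exp(-2M_qt^2/c_q^2)<1$ for $M_q$ reproduces — by the same bookkeeping as in the proof of \cref{coro2} — a sufficient condition of exactly the stated shape $M_q\gtrsim\tfrac{dL^2W^2c_q^2}{\epsilon^4}\log\!\big(c_1RW\sqrt[6]{C(d+7)/\epsilon^2}\big)$: the extra factor $L$ and the sixth root are the logarithmic footprint of the exponential-in-$L$ Lipschitz constant (the $R^{6L}W^{6L}$ entering $\log(1/\delta)$), while the $\epsilon^{-4}$ comes from applying Hoeffding at the resolution forced by $\delta$.

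The only delicate point has, in fact, already been dispatched: controlling how much the Lie conservation residual can inflate the Lipschitz constant of the empirical loss, which \cref{lem:16} handles through the reduction $|\mathcal{R}_{Lie}[u_\theta]-\mathcal{R}_{Lie}[u_{\theta'}]|\le M\,|\mathcal{R}_i[u_\theta]-\mathcal{R}_i[u_{\theta'}]|$ coming from \cref{A3}. Everything downstream is bias--variance bookkeeping; the single thing to check with care is that the constants $c_q$ in the $[0,c_q]$ hypothesis are consistent with the uniform bounds $\max_\theta\|\mathcal{R}_q[u_\theta]\|_\infty$ obtained inside the proof of \cref{lem:16}, so that the hypothesis is actually realisable for the network class considered.
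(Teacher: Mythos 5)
Your proposal is correct and takes essentially the same route as the paper: the paper's own proof simply invokes the covering-number-plus-Hoeffding machinery of \citet{de2022error} (\cref{coro2}) with the Lipschitz constant $\mathfrak{L}^q$ from \cref{lem:16} and parameter count $k=2dLW^2$, which is exactly the net-plus-union-bound argument you spell out, with the Lie residual absorbed into the interior loss via the $(1+M)$ factor. The only cosmetic difference is that you make the $\delta$-net and calibration explicit where the paper outsources them to the cited reference.
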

\begin{proof}
The proof follows the generalization analysis of PINNs \citep{de2022error}. 
Setting
\begin{equation}
    C =  (1+M)\max\limits_{x \in D}\left(1+\sum\limits_{i=1}^d\left|\upsilon(x)_i\right|+\sum\limits_{i=1}^d\left|\mu(x)_i\right|+\sum\limits_{i, j=1}^d\left|\left(\sigma(x) \sigma(x)^*\right)_{i j}\right|\right)^2,
\end{equation}
we can use  \cref{lem:16} with $a \leftarrow R, c \leftarrow c_q, \mathfrak{L} \leftarrow 2^{5+2 L} C^2(d+7)^2 L^4 R^{6 L-1} W^{6 L-6}$  and $k \leftarrow 2 d L W^2$ (\cref{coro2}). We then arrive at
\begin{equation*}
    \begin{array}{c}
k \ln \left(\frac{4 a \mathfrak{L}}{\epsilon^2}\right)+\ln \left(\frac{2 c_q}{\epsilon^2}\right) \leq 6 k L \ln \left(4 c_q R W \sqrt[6]{\frac{C(d+7)}{\epsilon^2}}\right) 
=12 d L^2 W^2 \ln \left(4 c_q R W \sqrt[6]{\frac{C(d+7)}{\epsilon^2}}\right) .
\end{array}
\end{equation*}
\end{proof}

\end{document}